\def\R{\mathbb{R}}
\def\N{\mathbb{N}}
\newcommand{\C}{\mathbb{C}}
\newtheorem{theorem}{Theorem}
\newtheorem{lemma}{Lemma}
\newtheorem{corollary}{Corollary}
\newtheorem{definition}{Definition}
\theoremstyle{definition}\newtheorem{remark}{Remark}
\renewcommand{\geq}{\geqslant}
\renewcommand{\leq}{\leqslant}
\renewcommand{\d}{\displaystyle}
\newcommand{\MTCP}{{\bf (MTCP)}}
\newcommand{\OCPZ}{{\bf (OCP0)}}
\newcommand{\OCPReps}{$\bf (OCPR)_\gamma$\ }
\title{Minimum time control of the rocket attitude reorientation associated with orbit dynamics}
\author{Jiamin Zhu\footnote{Sorbonne Universit\'es, UPMC Univ Paris 06, CNRS UMR 7598, Laboratoire Jacques-Louis Lions, F-75005, Paris, France (\texttt{zhu@ann.jussieu.fr}).}
\and
Emmanuel Tr\'elat\footnote{Sorbonne Universit\'es, UPMC Univ Paris 06, CNRS UMR 7598, Laboratoire Jacques-Louis Lions, Institut Universitaire de France, F-75005, Paris, France (\texttt{emmanuel.trelat@upmc.fr}).}
\and
Max Cerf\footnote{Airbus Defence and Space, Flight Control Unit, 66 route de Verneuil, BP 3002, 78133 Les Mureaux Cedex, France (\texttt{max.cerf@astrium.eads.net}).}
}
\date{}
\begin{document}
\maketitle

\begin{abstract}
In this paper, we investigate the minimal time problem for the guidance of a rocket, whose motion is described by its attitude kinematics and dynamics but also by its orbit dynamics. Our approach is based on a refined geometric study of the extremals coming from the application of the Pontryagin maximum principle. Our analysis reveals the existence of singular arcs of higher-order in the optimal synthesis, causing the occurrence of a chattering phenomenon, i.e., of an infinite number of switchings when trying to connect bang arcs with a singular arc. 

We establish a general result for bi-input control-affine systems, providing sufficient conditions under which the chattering phenomenon occurs. We show how this result can be applied to the problem of the guidance of the rocket. Based on this preliminary theoretical analysis, we implement efficient direct and indirect numerical methods, combined with numerical continuation, in order to compute numerically the optimal solutions of the problem. 
\end{abstract}

\bigskip

\textbf{Keywords:} Coupled attitude orbit problem; optimal control; Pontryagin maximum principle; shooting method; continuation; chattering arcs.

\tableofcontents

\section{Introduction}
The optimal control of orbit transfer (see, e.g., \cite{BonnardTrelat,Bryson,Marec}) and attitude reorientation (see, e.g., \cite{Bilimoria,Seywald,SHEN}) for spacecrafts have been extensively studied in the past few decades. 
The optimal control problem of \emph{orbit transfer} focuses mostly on how to move the spacecraft from one orbit or point to another orbit or point by using minimum energy, while the optimal control problem of \emph{attitude reorientation} is mainly devoted to determine how to change the pointing direction of the spacecraft in minimum time. 
In the existing literature, these two optimal control problems are considered separately in general. 
From the engineering point of view, for most satellites, it is appropriate to design separately the control laws for the orbit movement and for the attitude movement. 
However, for the rockets, the trajectory is controlled by its attitude angles: the way to make the rocket follow its nominal trajectory is to change its attitude angles, and therefore it is desirable to be able to determine the optimal control subject to the coupled dynamical system.
Though the control of the coupled problem was also studied in many previous works (see, e.g., \cite{Lara,Guzzetti,Knutson}), it does not seem that the problem has been investigated in the optimal control framework so far.

In this paper, we consider the time minimum control of the attitude reorientation coupled with the orbit dynamics of a rocket, denoted in short $\MTCP$.
The chattering phenomenon that may occur according to the terminal conditions under consideration, makes in particular the problem quite difficult.
\emph{Chattering} means that the control switches an infinite number of times over a compact time interval. Such a phenomenon typically occurs when trying to connect bang arcs with a higher-order singular arc (see, e.g., \cite{FULLER1,Marchal,ZELIKIN,ZTC}). 
In \cite{ZTC}, we studied the planar version of $\MTCP$, where the system consists of a single-input control-affine system, and we established as well the occurence of a chattering phenomenon and that the chattering extremals are locally optimal in $C^0$ topology.\footnote{A trajectory $\bar{x}(\cdot)$ is said to be locally optimal in $C^0$ topology if, for every neighborhood $V$ of $\bar{x}(\cdot)$ in the state space, for every real number $\eta$ so that $| \eta | \leq \epsilon$, for every trajectory $x(\cdot)$, associated to a control $v$ on $[0,T+\eta]$, contained in $W$, and satisfying $x(0) = \bar{x}(0) = x_0$, $x(T+\eta) = \bar{x}(T)$, there holds $C(T+\eta,v) \geq C(T,u)$, where $C$ is the cost functional to be minimized.}

A second important difficulty in $\MTCP$ is due to the coupling of the attitude movement with the orbit dynamics. Indeed the system contains both slow (orbit) and fast (attitude) dynamics. This observation will be particularly important in order to design appropriate numerical approaches.

In order to analyze the extremals of the problem, we use geometric optimal control theory (see \cite{AGRACHEV,SCHATTLER,Trelat2}). 
The Pontryagin maximum principle and the geometric optimal control, especially the concept of Lie bracket, will be used in this paper in order to establish an existence result of the chattering phenomenon.
More precisely, based on the Goh and generalized Legendre-Clebsch conditions, we prove that there exist optimal chattering arcs when trying to connect a regular arc with a singular arc of order two.

There exist various numerical approaches to solve an optimal control problem. The direct methods (see, e.g., \cite{Betts}) consist of discretizing the state and the control and thus of reducing the problem to a nonlinear optimization problem (nonlinear programming) with constraints. The indirect methods consist of numerically solving a boundary value problem obtained by applying Pontryagin maximum principle (PMP, see \cite{Pontryagin}), by means of a shooting method. 
There exist also mixed methods that discretize the PMP necessary conditions and use then a large-scale optimization solver (see, e.g., \cite{Berend}).
Since these numerical approaches are not easy to initialize successfully, it is required them to combine with other theoretical or numerical approaches (see the survey \cite{Trelat2}).
Here, we will use numerical continuation, which has proved to be very powerful tool to be combined with the PMP. For example, in \cite{CHT,Gergaud,Martinon}, the continuation method is used to solve difficult orbit transfer problems. 

However, due to the chattering phenomenon, numerical continuation combined with shooting cannot give an optimal solution to the problem for certain terminal conditions for which the optimal trajectory contains a singular arc of higher-order.
In that case, we propose sub-optimal strategies by using direct methods computing approximate piecewise constant controls. 
It is noticeable that our indirect approach can also be adapted to generate sub-optimal solutions, by stopping the continuation procedure before its failure due to chattering. This approach happens to be faster than the direct approach, and appears as an interesting alternative for practice.

From the engineer point of view, the theoretical analysis as well as the numerical strategies and the way to design them (in particular, the design of the problem of order zero) are strongly based on the fact that the orbit movement is much slower than the attitude movement. 

\medskip

The paper is organized as follows. 
In Section \ref{Chp_model}, we describe the mathematical model of the system consisting of the attitude dynamics, of the attitude kinematics, and of the orbit dynamics. 
In Section \ref{Chp_generalresults}, we recall the Pontryagin maximum principle and some higher necessary conditions of optimality (Goh and generalized Legendre-Clebsch conditions) for bi-input control affine systems. Based on these necessary conditions of optimality, we establish a result on the existence of the optimal chattering extremals. 
In Section \ref{Chp_Gae}, we analyze the regular and singular extremals of the problem $\MTCP$. For the regular extremals, we classify the switching points and state some useful properties. For the singular extremals (which are of order two), we show that the chattering phenomenon occurs for the problem $\MTCP$ by using the results given in the previous section.
In Section \ref{Chp_continuation}, we propose a numerical approach to solve the problem $\MTCP$ by implementing numerical continuation combined with shooting.
Numerical results are given in Section \ref{Chp_numerical}.

%%%%%%%%%%%%%%%%%%%%%%%%%%%%%%%%%%
\section{Model and problem statement}\label{Chp_model}
The problem is to control the attitude movement coupled with the orbit dynamics in the launching ascent stage for a rocket. 
In this paper, we take the system parameters of the rocket \emph{Ariane 5}. 
In order to keep the stability of the rocket along the flight, the attitude maneuver should be moderate, i.e., at most $\pm 20$ degrees, and then it is possible to use Euler angles to model the attitude of the engine.
In this section, we first define the coordinates systems, and then we give the equations of the attitude dynamics, of the attitude kinematics and of the orbit dynamics. 
The model consists of eight ordinary differential equations: three for the components of the velocity vector, three for the Euler angles and two for the components of the angular velocity vector.

%%%%%%%%%%%%%%%%%%%%%%%%%%%%%%%
\subsection{Coordinate systems}
Throughout the paper, we make the following assumptions:
\begin{itemize}
\item The Earth is a sphere and is fixed in the inertial space, i.e., the angular velocity of the Earth is zero, which means that $\vec{\omega}_{ei}=\vec{0}$.
\item The position of the rocket remains the same during the maneuver of the rocket.
\item The rocket is an axial symmetric cylinder.
\item The aero-dynamical forces are zero.
\item The rocket engine cannot be shut off during the flight and the module of the thrust force is constant, taking its maximum value, i.e., $T=T_{max}$.
\end{itemize}

The \textbf{unit single-axis rotation} maps $R_i(\sigma)$: $\R \rightarrow \R^{3 \times 3}$, for $\sigma \in \R$, $i=x,y,z$ are defined by
\begin{equation*}
    \label{}
    R_x(\sigma)=
    \begin{pmatrix}
        1  &0                    &0                \\
        0  & \cos \sigma   & \sin \sigma\\
        0  &- \sin \sigma  &  \cos \sigma
    \end{pmatrix}
    , \hspace{0.3cm}
    R_y(\sigma)=
    \begin{pmatrix}
        \cos \sigma   & 0                 & -\sin \sigma   \\
         0                  & 1                  & 0  \\
        \sin \sigma  & 0                    &  \cos \sigma
    \end{pmatrix}
    , \hspace{0.3cm}
    R_z(\sigma)=
    \begin{pmatrix}
        \cos \sigma   & \sin \sigma   & 0 \\
        - \sin \sigma &  \cos \sigma & 0\\
        0                  & 0                   & 1
    \end{pmatrix}.
\end{equation*}
For a given vector $\vec{e} \in \R^3$, taking $R_i(\sigma) \vec{e}$ means to rotate the vector $\vec{e}$ with respect to the axis $i$ by an angle of $\sigma$. With this definition, we next introduce the coordinate frames that will be used throughout the paper. 

The \textbf{Earth frame} $S_g=(\hat{x}_g,\hat{y}_g,\hat{z}_g)$ is fixed around the center of the Earth $O$. The axis $\hat{z}_g$ points to the North pole, and the axis $\hat{x}_g$ is in the equatorial plan of the Earth pointing to the equinox.

The \textbf{launch frame} $S_{R}=(\hat{x}_R,\hat{y}_R,\hat{z}_R)$ is fixed around the launch point $O_R$ (where the rocket is launched). The axis $\hat{x}_R$ is normal to the local tangent plane, pointing to the launch direction (here we assume that the rocket is vertically launched, i.e., the launch direction is perpendicular with the local tangent plane), and the axis $\hat{z}_R$ points to the North. As shown in Figure \ref{Frames} (a), the launch frame is derived from the Earth frame by two ordered unit single-axis rotations $R_z(\ell_R)$ and $R_y(-L_R)$,
\begin{equation*}
S_g \xrightarrow{R_z(\ell_R)} \circ \xrightarrow{R_y(-L_R)}  S_R
\end{equation*}
where $\ell_R$ and $L_R$ are the longitude and latitude of the launch point, respectively.
\begin{figure}[h] 
\centering
\includegraphics[scale=0.9]{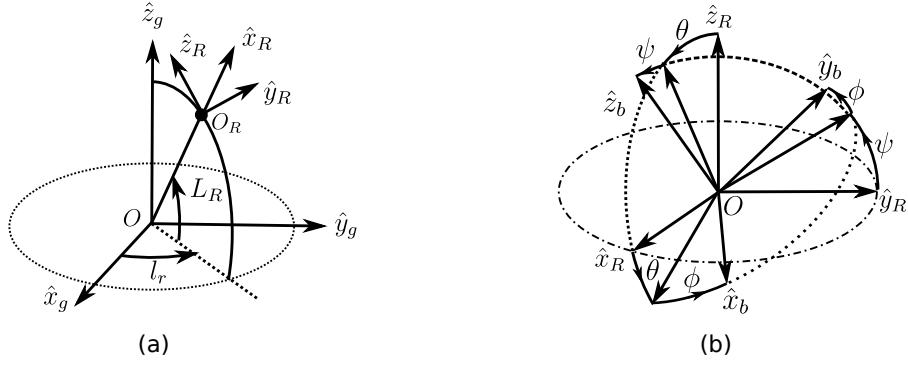}           
\caption{Coordinate systems and relations.}
\label{Frames}
\end{figure}

The \textbf{body frame} $S_b=(\hat{x}_b,\hat{y}_b,\hat{z}_b)$ is defined as follows. The origin of the frame $O_b$ is fixed around the mass center of the rocket, the axis $\hat{z}_b$ is along the axis-symmetric axis of the rocket, and the axis $\hat{x}_b$ is in the cross-section. The body frame can be derived by three ordered unit single-axis rotations from the launch frame, as shown in Figure \ref{Frames} (b),
$$
    S_{R} \xrightarrow{R_y(\theta)} \circ \xrightarrow{R_x(\psi)} \circ \xrightarrow{R_z(\phi)} S_b
$$
where $\theta$ is the pitch angle, $\psi$ is the yaw angle and $\phi$ is the roll angle. Therefore, the transformation matrix from $S_R$ to $S_b$ is
\begin{equation} \label{LbR}
\begin{split}
	L_{bR}=&R_z(\phi) R_x(\psi) R_y(\theta)  \\
	=& \begin{pmatrix}
      \cos \theta \cos \phi + \sin \theta \sin \psi \sin \phi & \cos \psi \sin \phi & -\sin \theta 		\cos \phi + \cos \theta \sin \psi \sin \phi    \\
      -\cos \theta \sin \phi + \sin \theta \sin \psi \cos \phi & \cos \psi \cos \phi &\sin \theta 		\sin \phi + \cos \theta \sin \psi \cos \phi \\
      \sin \theta \cos \psi & -\sin \psi & \cos \theta \cos \psi
\end{pmatrix},
\end{split}
\end{equation}
and the transformation matrix from $S_b$ to $S_R$ is $L_{Rb}=L_{bR}^{-1}=L_{bR}^{\top}$.

%%%%%%%%%%%%%%%%%%%%%%%%%%%%%%%%%%%%%%%%%%%
\subsection{Attitude dynamic equations}
The attitude dynamics are written in vectorial form in the body frame $S_b$ as
\begin{equation} \label{ade_vf}
    (I \vec{\omega})_b = -(\vec{\omega})_b \wedge (I \vec{\omega})_b + (\vec{M})_b,
\end{equation}
where $I$ is the inertia matrix, $\vec{\omega}$ is the absolute angular velocity vector, i.e., the angular velocity of the rocket with respect to the inertial space, and $\vec{M}$ is the control torques introduced by the rocket thrust. The index $(\cdot)_b$ means that the vectors are expressed in the body frame $S_b$. 

Setting $(I)_b=\mathrm{diag}(I_x,I_y,I_z)$, $(\vec{\omega})_b=(\omega_x,\omega_y,\omega_z)^{\top}$ and $(\vec{M})_b = (M_x,M_y,M_z)^{\top}$, \eqref{ade_vf} gives
\begin{equation} \label{ade_cf}
\begin{cases}
	\d{I_x \dot{\omega}_x=(I_y-I_z) \omega _y \omega _z + M_x},\\
	\d{I_y \dot{\omega}_y=(I_z-I_x) \omega _x \omega _z + M_y},\\
	\d{I_z \dot{\omega}_z=(I_x-I_y) \omega _x \omega _y + M_z}.
\end{cases}
\end{equation}
The control torque $\vec{M}$ is the cross product of the thrust vector $\vec{T}$ and of its moment arm $\vec{L}$. The moment arm is the vector from the center of mass $O_b$ to the force acting point $O_F$, given here by $ (\vec{L})_b =(0, 0, -l)^\top$. Moreover, as shown in Figure \ref{Thrust} (a), the thrust force vector is 
$$
\vec{T} = (-T \sin \mu \cos \zeta , -T \sin \mu \sin \zeta, T \cos \mu)^\top,
$$
where $T = T_{max}$, $\mu \in [0,\mu_{max}]$, and $\zeta \in [- \pi,\pi]$. The control torque is then
$$
 (\vec{M})_b = (\vec{L})_b \wedge (\vec{T})_b= ( -T l \sin \mu \sin \zeta ,\,T l \sin \mu \cos \zeta ,\, 0 )^\top.
$$
By assumption, the rocket is axial symmetric, and hence $I_x = I_y$. Assume that $\d{\omega_z(0)=0}$, and let $\d{ b = T_{max} l / I_x }$. Then \eqref{ade_cf} gives
$$
\dot{\omega}_x= - b \sin \mu \sin \zeta,\qquad \dot{\omega}_y=   b \sin \mu \cos \zeta,
$$
with $\omega_z \equiv 0$.

\begin{figure}[h]
\centering
\includegraphics[scale=0.8]{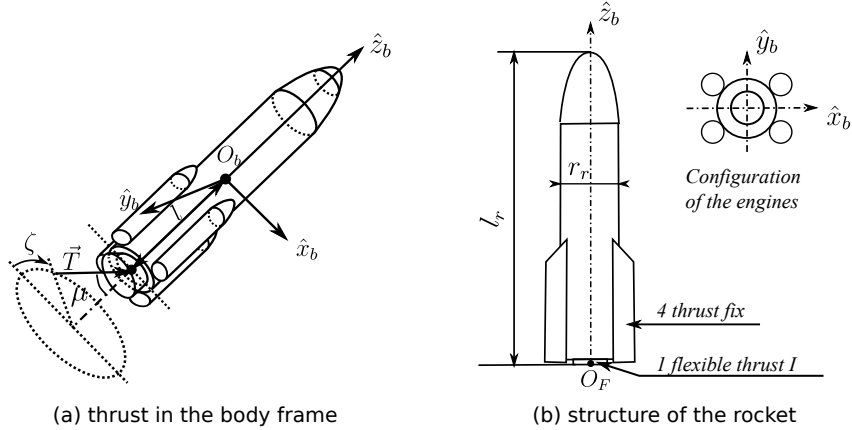}
\caption{Thrust in the body Frame}
\label{Thrust}
\end{figure}

%Let $\mu_{max}$ denote the maximum allowed value of the angle $\mu$.
According to the parameters of the rocket engine, $\mu_{max}$ is less than $10$ degrees
and thus the error between $\sin \mu$ and $\mu$ is less than $0.5 \%$.
Therefore, in the model we make the approximation $\sin \mu \simeq \mu$ and we define $u_1= \bar{\mu} \cos \zeta$ and $u_2= \bar{\mu} \sin \zeta$ and $\bar{\mu}=\mu/\mu_{max}$ with $\bar{b}=b \mu_{max}$. Hence
\begin{equation} \label{attitude_dynamics}
\dot{\omega }_x= -\bar{b} u_2,\qquad \dot{\omega }_y=  \bar{b} u_1.
\end{equation}

%%%%%%%%%%%%%%%%%%%%%%%%%%%%%%%%%%%%%%%%%%%
\subsection{Attitude kinematics equations}
Since $\vec{\omega}$ is the angular velocity vector of the rocket with respect to the inertial space, it is equal to the sum of the angular velocity $\vec{\omega}_{bg}$ of the rocket with respect to the Earth frame, of the angular velocity $\vec{\omega}_{ge}$ of the Earth frame with respect to the Earth, and of the angular velocity $\vec{\omega}_{ei}$ of the Earth with respect to the inertial space. According to the assumptions and definitions of the frames, it is easy to see that the last two terms are zero, and thus $\vec{\omega} = \vec{\omega}_{bg}$. Therefore, based on the definition of the body frame, the relationship between angular velocity and Euler angles are
\begin{equation*} \label{}
    \begin{pmatrix}
        \omega_x    \\
        \omega_y  \\
        \omega_z
    \end{pmatrix}
    = L_{bR} \begin{pmatrix}
        0   \\
        \dot{\theta}\\
        0
    \end{pmatrix}
    + \begin{pmatrix}
        \dot{\psi} \cos \phi    \\
        -\dot{\psi} \sin \phi    \\
        0
    \end{pmatrix}
    +\begin{pmatrix}
        0    \\
        0  \\
        \dot{\phi}
    \end{pmatrix},
\end{equation*}
where $L_{bR}$ is given by \eqref{LbR}. Then the equations of the attitude kinematics are
\begin{equation} \label{attitude_kinetics}
\dot{\theta}=(\omega_x \sin \phi + \omega_y \cos \phi)/ \cos \psi , \quad
\dot{\psi}=\omega_x \cos \phi - \omega_y \sin \phi ,\quad
\dot{\phi}=\tan \psi (\omega_x \sin \phi + \omega_y \cos \phi) .
\end{equation}
Therefore, the two equations of \eqref{attitude_dynamics} and the three equations of \eqref{attitude_kinetics} describe the attitude movement.

Note that when $\psi = \pi/2+k\pi$, $k \in \mathbb{N}$, the Euler angles defined above are not well defined (usual singularities of the Euler angles). We assume in this paper that the maneuvers are small enough, so that these singularities will not be encountered.
%In fact, these singularities can be overcome by calculating the limit values of the derivatives of the state variables at the singular points.

%%%%%%%%%%%%%%%%%%%%%%%%%%%%%%%%%%%%%%
\subsection{Orbit dynamics equations}\label{C_ode}
The equation of the orbit dynamics in vectorial form is
\begin{equation} \label{orbit_dynamics_vf}
     \frac{d (\vec{V})_R}{dt} = (\vec{g})_R + \frac{L_{Rb} (\vec{T})_b}{m} + (\vec{\omega})_R \wedge (\vec{V})_R
     - 2 (\vec{\omega}_{ei} \wedge \vec{V})_R - (\vec{\omega}_{ei} \wedge (\vec{\omega}_R \wedge \vec{r}))_R ,
\end{equation}
where the notation $(\cdot)_R$ means that the vector is expressed in the launch frame $S_R$. The vector $\vec{V}$ is the velocity of the rocket with respect to the Earth frame $S_g$, and its components in the launch frame are $v_x$, $v_y$ and $v_z$. The vector $(\vec{g})_R=(g_x,g_y,g_z)^{\top}$ can be approximated by $(\vec{g})_R \approx (-g_0, 0, 0)^\top$, where $g_0$ is a real number representing the standard gravity ($g_0=9.8$).

Due to the fact that the control angle $\mu$ is very small in practice (physical constraints imposed by the rocket engine), we assume that the thrust force is along the body axial symmetric axis. According to the previous assumptions, the equation of the orbit dynamics \eqref{orbit_dynamics_vf} becomes
\begin{equation}\label{orbit_dynamics}
\dot{v}_x= a \sin \theta \cos \psi + g_x ,\qquad \dot{v}_y= - a \sin \psi + g_y,\qquad \dot{v}_z= a \cos \theta \cos \psi + g_z,
\end{equation}
where $a=T_{max}/m$ is constant. 
Note that this additional assumption is made also because the attitude of the rocket is controlled by only a part of the rocket engines, and so the total thrust remains almost parallel to the rocket symmetric axis.

%%%%%%%%%%%%%%%%%%%%%%%%%%%%%%%%%%%%%%%%%
\subsection{Minimum time control problem $\MTCP$}
%%%%%%
\paragraph{Model.}
The system \eqref{attitude_dynamics}-\eqref{attitude_kinetics}-\eqref{orbit_dynamics} has two control inputs $u_1$ and $u_2$, and we obtain the system
\begin{equation}\label{sys_full}
\begin{split}
& \dot{v}_x= a \sin \theta \cos \psi + g_x , \qquad \dot{v}_y= - a \sin \psi + g_y,\qquad \dot{v}_z= a \cos \theta \cos \psi + g_z,\\
& \dot{\theta}=(\omega_x \sin \phi + \omega_y \cos \phi)/ \cos \psi , \qquad \dot{\psi}=\omega_x \cos \phi - \omega_y \sin \phi ,\qquad \dot{\phi}= (\omega_x \sin \phi + \omega_y \cos \phi) \tan \psi ,\\
& \dot{\omega }_x= -\bar{b} u_2,\qquad \dot{\omega }_y=  \bar{b} u_1.
\end{split}
\end{equation}
Defining the state variable $x=(v_x, v_y, v_z, \theta, \psi, \phi, \omega_x, \omega_y)$, we write the system \eqref{sys_full} as the bi-input control-affine system
\begin{equation} \label{sys_multi_affine}
    \dot{x} = f(x) + u_1g_1 (x)  + u_2g_2 (x) ,
\end{equation}
where the controls $u_1$ and $u_2$ satisfy the constraint $u_1^2+u_2^2 \leq 1$,
and the vector fields $f$, $g_1$ and $g_2$ are defined by
\begin{multline}\label{fetgi}
f= (a \sin \theta \cos \psi + g_x) \frac{\partial}{\partial v_x}
                 + (- a \sin \psi + g_y) \frac{\partial}{\partial v_y}
                 + (a \cos \theta \cos \psi + g_z) \frac{\partial}{\partial v_z}\\
                 + (\omega_x \sin \phi + \omega_y \cos \phi)/ \cos \psi \frac{\partial}{\partial \theta}
                 + (\omega_x \cos \phi - \omega_y \sin \phi)\frac{\partial}{\partial \psi}
                + \tan \psi (\omega_x \sin \phi + \omega_y \cos \phi) \frac{\partial}{\partial \phi} , \\
g_1 = \bar{b} \frac{\partial}{\partial \omega_y},\qquad g_2 = - \bar{b} \frac{\partial}{\partial \omega_x}.
\end{multline}

%%%%%%
\paragraph{Terminal conditions and system parameters.}
Let ${v_{x_0}}$, ${v_{y_0}}$, ${v_{z_0}}$, $\theta_0$, $\psi_0$, $\phi_0$, ${\omega_{x_0}}$, ${\omega_{y_0}}$, $\theta_f$, $\psi_f$, $\phi_f$, $\omega_{x_f}$ and $\omega_{y_f}$ be real numbers. 
The initial conditions are fixed to
\begin{equation} \label{OCPc_ic}
\begin{split}
& v_x(0) = {v_{x_0}},\quad  %v_x(0) = v_0 \sin \theta_0 \cos \psi_0
v_y(0) = {v_{y_0}},\quad  %v_y(0) = -v_0 \sin \psi_0
v_z(0) = {v_{z_0}},\\  %v_z(0) = v_0 \cos \theta_0 \cos \psi_0
& \theta(0) = \theta_0,\quad \psi(0)=\psi_0,\quad \phi(0)=\phi_0,\quad
\omega_x(0) = {\omega_{x_0}},\quad \omega_y(0)={\omega_{y_0}}.
\end{split}
\end{equation}
The desired final velocity is required to be parallel to the body axis $\hat{z}_b$, according to $
(\vec{V}(t_f))_R \wedge (\hat{z}_b (t_f))_R=\vec{0}$, and therefore, the constraints on the final conditions are
\begin{equation} \label{OCPc_fc}
\begin{split}
& v_{z_f} \sin \psi_f + v_{y_f} \cos \theta_f \cos \psi_f =0,\quad v_{z_f} \sin \theta_f - v_{x_f} \cos \theta_f =0,\\
& \theta(t_f)=\theta_f,\quad \psi(t_f)=\psi_f,\quad \phi(t_f)=\phi_f, \quad \omega_x(t_f)=\omega_{x_f},\quad \omega_y(t_f)=\omega_{y_f}.
\end{split}
\end{equation}
Note that the parallel condition on the final velocity is due to the fact that most rockets are planned to maintain a zero angle of attack along the flight. The angle of flight, when the air wind is set to zero, is defined as the angle between the velocity and the rocket body axis.

%%%%%%%%
\paragraph{Minimum Time Control Problem $\MTCP$.}
We set $x_0 = ({v_{x_0}},{v_{y_0}},{v_{z_0}},\theta_0,\psi_0,\phi_0,{\omega_{x_0}},{\omega_{y_0}}) \in \R^8$, and we define the target set (submanifold of $\R^8$)
\begin{equation*}
\begin{split}
M_1 = & \{(v_x,v_y,v_z,\theta,\psi,\phi,\omega_x,\omega_y) \in \R^8 \ \mid\  v_z \sin \psi_f + v_y \cos \theta_f \cos \psi_f =0,\\
 &\qquad v_z \sin \psi_f + v_y \cos \theta_f \cos \psi_f =0, \quad \theta =\theta_f,\quad \psi=\psi_f,\quad \phi=\phi_f,\quad \omega_x=\omega_{x_f},\quad \omega_y=\omega_{y_f} \} .
\end{split}
\end{equation*}
The minimum time control problem $\MTCP$ consists of steering the bi-input control-affine system \eqref{sys_multi_affine} from $x(0)=x_0$ to the final target $M_1$ in minimum time $t_f$, with controls satisfying the constraint $u_1^2+u_2^2 \leq 1$.

%%%%%%%%%%%%%%
\section{Some general results for bi-input control-affine systems}\label{Chp_generalresults}
In this section, we focus on the chattering phenomenon for bi-input control-affine systems with control constraints and with commuting controlled vector fields. The results that we are going to give are general and will be used in the next section to analyze the problem $\MTCP$. 

We consider the following general framework.
Let $M$ be a smooth manifold of dimension $n$, let $x_0\in M$ be arbitrary, and let $M_1$ be a submanifold of $M$. We consider on $M$ the minimal time control problem
\begin{equation} \label{pb_ocp}
\left\{ \begin{split}
	& \min t_f , \\
	& \dot{x}(t) = f(x(t))+u_1(t) g_1(x(t))+u_2(t) g_2(x(t)),\quad u=(u_1,u_2) \\
	& \Vert u(t)\Vert^2 = u_1(t)^2+u_2(t)^2 \leq 1 , \\
	& x(0) = x_0,\ x(t_f) \in M_1 , \quad t_f\geq 0\ \textrm{free},
\end{split}\right.
\end{equation}
where $f$, $g_1$ and $g_2$ are smooth vector fields on $M$. 

According to classical results (see, e.g., \cite{Cesari,Trelat2}), there exists at least one optimal solution $(x(\cdot),u(\cdot))$, defined on $[0,t_f]$.

%%%%%%%%%%%%%%
\subsection{Application of the Pontryagin maximum principle}
According to the Pontryagin maximum principle (in short, PMP, see \cite{Pontryagin}), there must exist an absolutely continuous mapping $p(\cdot)$ defined on $[0,t_f]$ (called adjoint vector), such that $p(t)\in T^*_{x(t)}M$ (cotangent space) for every $t\in[0,t_f]$, and a real number $p^0 \leq 0$, with $(p(\cdot),p^0)\neq 0$, such that
$$
\dot{x}(t) = \frac{\partial H}{\partial p}(x(t),p(t),p^0,u(t)),\quad
\dot{p}(t) = -\frac{\partial H}{\partial x}(x(t),p(t),p^0,u(t)) ,
$$
almost everywhere on $[0,t_f]$, where $H(x,p,p^0,u) = h_0(x,p)+u_1 h_1(x,p) +u_2 h_2(x,p)+p^0$ is the Hamiltonian of the optimal control problem \eqref{pb_ocp}.
Here, we have set $h_0(x,p)=\langle p, f(x) \rangle $, $h_1(x,p)=\langle p, g_1(x) \rangle $, and $h_2(x,p)=\langle p, g_2(x) \rangle $. The maximization condition of the PMP yields, almost everywhere on $[0,t_f]$,
\begin{equation} \label{eq_ut}
u(t) = \frac{(h_1(t),h_2(t)) }{\sqrt{h_1(t)^2+h_2(t)^2}} = \frac{\Phi(t)}{\Vert\Phi(t)\Vert},
\end{equation}
whenever $\Phi(t)=(h_1(t),h_2(t))\neq (0,0)$. We call $\Phi$ (as well as its components) the switching function. Note that $\Phi$ is continuous. Here and throughout the paper, we denote by $h_i(t)=h_i(x(t),p(t))$, with a slight abuse of notation.

Moreover, we have the transversality condition $p(t_f) \perp T_{x(t_f)} M_1$, where $T_{x(t_f)}M_1$ is the tangent space to $M_1$ at the point $x(t_f)$, and, the final time $t_f$ being free and the system being autonomous, we have also 
$h_0(x(t),p(t))+\Vert\Phi(t)\Vert+p^0=0,\: \forall t\in[0,t_f]$.

The quadruple $(x(\cdot),p(\cdot),p^0,u(\cdot))$ is called an extremal lift of $x(\cdot)$.
An extremal is said to be normal (resp., abnormal) if $p^0 < 0$ (resp., $p^0 = 0$). 

We say that an arc (restriction of an extremal to a subinterval $I$) is \emph{regular} if $\Vert \Phi(t)\Vert \neq 0$ along $I$. Otherwise, the arc is said to be \emph{singular}.
Note that a singular extremal may be both normal or abnormal. We will see in Section \ref{ext_sin_chat} that the singular extremals of the problem $\MTCP$ must be normal.

A \emph{switching time} is a time $t$ at which $\Phi(t)=(0,0)$, that is, both $h_1$ and $h_2$ vanish at time $t$.
An arc that is a concatenation of an infinite number of regular arcs is said to be \emph{chattering}. The chattering arc is associated with a \emph{chattering control} that switches an infinite number of times, over a compact time interval.
A junction between a regular arc and a singular arc is said to be a \emph{singular junction}.

%%%%%%%%%%%%%%
\subsection{Computation of singular arcs, and necessary conditions for optimality}\label{sec32}
We next define the order of a singular control, since it is important to understand and explain the occurence of chattering. This concept is related to the way singular controls are computed, and since it is a bit technical to define, we start with a preliminary quite informal discussion.
Here and throughout the paper, we use the notation $\mathrm{ad}f.g=[f,g]$ (Lie bracket of vector fields) and $\mathrm{ad}h_i.h_j=\{h_i,h_j\}$ (Poisson bracket of Hamiltonian functions).

\paragraph{Preliminary informal discussion.}
In order to compute singular controls, the usual method is to differentiate several times the switching function, until the control appears in a nontrivial way. If $\Vert \Phi(t)\Vert = 0$ for every $t\in I$, then $h_1(t)=h_2(t)=0$, and, differentiating in $t$, we get, using the Poisson bracket, $\dot{h}_1 = \{ h_0,h_1 \} + u_2 \{ h_2,h_1 \}  = 0$ and $\dot{h}_2 = \{ h_0,h_2 \} + u_1 \{ h_1,h_2 \}  = 0$ along $I$. According to the Goh condition (see \cite{Goh}, see also below), if the singular arc is optimal, then the Goh condition $ \{ h_1,h_2 \}  = \langle p, [g_1,g_2](x)\rangle= 0$ must be satisfied along $I$. Therefore we get that $\dot{h}_1 = \{ h_0,h_1 \}  = \langle p, [f,g_1](x)\rangle=0$ and $\dot{h}_2 = \{ h_0,h_2 \}  = \langle p, [f,g_2](x)\rangle=0$ along $I$.

Let us now assume that the vector fields $g_1$ and $g_2$ commute, i.e., $[g_1,g_2] = 0$.
By differentiating again, we get
\begin{align*}
\ddot{h}_1 = \{ h_0,\{h_0,h_1 \}\} + u_1 \{ h_1,\{h_0,h_1 \}\} + u_2 \{ h_2,\{h_0,h_1 \}\} = 0, \\
\ddot{h}_2 = \{ h_0,\{h_0,h_2 \}\}+ u_1 \{ h_1,\{h_0,h_2 \}\} + u_2 \{ h_2,\{h_0,h_2 \}\} = 0.
\end{align*}
If 
\begin{equation*}
\det \Delta_1 = \det
    \begin{pmatrix}
       \{ h_1,\{h_0,h_1 \}\}    & \{ h_2,\{h_0,h_1 \}\}   \\
       \{ h_1,\{h_0,h_2 \}\}    & \{ h_2,\{h_0,h_2 \}\}
    \end{pmatrix}
\neq 0 
\end{equation*}
along $I$, then
\begin{equation} \label{eq_uso1}
\begin{cases}
u_1 = \big( -\{ h_0,\{h_0,h_1 \}\}  \{ h_2,\{h_0,h_2 \}\} + \{ h_0,\{h_0,h_2 \}\} \{ h_2,\{h_0,h_1 \}\} \big) / \det \Delta_1, \\
u_2 = \big(   \{ h_0,\{h_0,h_1 \}\} \{ h_1,\{h_0,h_2 \}\} - \{ h_0,\{h_0,h_2 \}\} \{ h_1,\{h_0,h_1 \}\} \big) / \det \Delta_1,
\end{cases}
\end{equation}
and we say that the control $u=(u_1,u_2)$ is of \emph{order $1$} (also called \emph{minimal order} in \cite{BonnardChyba,Chitour}).
Note that $u_1$ and $u_2$ must moreover satisfy the constraint $u_1^2+u_2^2 \leq 1$. 
Note also that, if moreover $[g_1,[f,g_2]]=0$ and $[g_2,[f,g_1]]=0$, then \eqref{eq_uso1} yields
\begin{equation*} 
u_1 =  -\{ h_0,\{h_0,h_1 \}\}   / \{ h_1,\{h_0,h_1 \}\}, \quad
u_2 =  - \{ h_0,\{h_0,h_2 \}\}   /\{ h_2,\{h_0,h_2 \}\}.
\end{equation*}

Now, if $\{ h_1,\{h_0,h_1 \}\} = 0$ and $\{ h_2,\{h_0,h_2 \}\}=0$ along $I$, then we must have $\{ h_i,\{h_0,h_j \}\} = 0$, $i,j=1,2$, $i\neq j$ according to the Goh condition (see \cite{Goh, Krener}, see also below), and hence we go on differentiating. Assuming that $[ g_1,[f,g_1 ]] = 0$ and $[ g_2,[f,g_2 ]] = 0$, we have
$$
[g_i, \mathrm{ad}^2 f.g_i]] = [g_i, [f, \mathrm{ad}f.g_i]] = -[f,[\mathrm{ad}f.g_i,g_i]] - [\mathrm{ad}f.g_i, [g_i,f]] = 0, \quad i=1,2 ,
$$
and we get
\begin{equation} \label{eq_uso2}
h_1^{(3)} = \{ h_0,\mathrm{ad}^2 h_0. h_1 \} + u_2 \{ h_2,\mathrm{ad}^2 h_0. h_1 \}  = 0,\quad
h_2^{(3)} = \{ h_0,\mathrm{ad}^2 h_0. h_2 \} + u_1 \{ h_1,\mathrm{ad}^2 h_0. h_2 \}  = 0.
\end{equation}
Due to higher-order necessary conditions for optimality (see below), an optimal singular control cannot appear in a nontrivial way with an odd number of derivatives, therefore we must have $\{ h_2,\mathrm{ad}^2 h_0. h_1 \}=0 $ and $\{ h_1,\mathrm{ad}^2 h_0. h_2 \} = 0$ along $I$. Accordingly, $h_i^{(3)}=0$, $i=1,2$, gives the three additional constraints along the singular arc $\{ h_0,\mathrm{ad}^2 h_0. h_1 \}=0$, $\{ h_0,\mathrm{ad}^2 h_0. h_2 \}=0$, and $\{ h_2,\mathrm{ad}^2 h_0. h_1 \}  = -\{ h_1,\mathrm{ad}^2 h_0. h_2 \}=0$.
Derivating these constraints with respect to $t$, we get
%\begin{align*}
%\mathrm{ad}^4h_0.h_1 + u_1 \{ h_1,\mathrm{ad}^3h_0.h_1\} + u_2 \{ h_2,\mathrm{ad}^3h_0.h_1\} = 0,\\
%\mathrm{ad}^4h_0.h_2 + u_1 \{ h_1,\mathrm{ad}^3h_0.h_2\} + u_2 \{ h_2,\mathrm{ad}^3h_0.h_2\}= 0,
%\end{align*}
%and $\{h_0,\{ h_2,\mathrm{ad}^2 h_0. h_1 \}\} = - \{h_0,\{ h_1,\mathrm{ad}^2 h_0. h_2 \}\}=0$.
%Hence
\begin{align*}
h_1^{(4)}=\mathrm{ad}^4h_0.h_1 + u_1 \{ h_1,\mathrm{ad}^3h_0.h_1\} + u_2 \{ \mathrm{ad}^2h_0.h_1,\mathrm{ad}h_0.h_2\} = 0, \\
h_2^{(4)}=\mathrm{ad}^4h_0.h_2 + u_1 \{ \mathrm{ad}^2h_0.h_2,\mathrm{ad}h_0.h_1\} + u_2 \{ h_2,\mathrm{ad}^3h_0.h_2\}= 0 .
\end{align*}
Assuming that $\{h_i,\mathrm{ad}^3h_0.h_i\}<0$, $i=1,2$ (generalized Legendre-Clebsch condition, see below) and that
\begin{equation*}
\det \Delta_2  = 
\det
    \begin{pmatrix}
       \{ h_1,\mathrm{ad}^3h_0.h_1\}  & \{ \mathrm{ad}^2h_0.h_1,\mathrm{ad}h_0.h_2\}   \\
       \{ \mathrm{ad}^2h_0.h_2,\mathrm{ad}h_0.h_1\} & \{ h_2,\mathrm{ad}^3h_0.h_2\}
    \end{pmatrix}   
\neq 0 
\end{equation*}
along $I$, the singular control is given by
\begin{equation*}% \label{eq_uso3}
\begin{cases}
u_1 = \big(-(\mathrm{ad}^4h_0.h_1)\{ h_2,\mathrm{ad}^3h_0.h_2\} + 
            (\mathrm{ad}^4h_0.h_2)\{ h_2,\mathrm{ad}^3h_0.h_1\} \big) / \det \Delta_2, \\
u_2 = \big( (\mathrm{ad}^4h_0.h_1) \{ h_1,\mathrm{ad}^3h_0.h_2\} -
            (\mathrm{ad}^4h_0.h_2) \{ h_1,\mathrm{ad}^3h_0.h_1\} \big) / \det \Delta_2.
\end{cases}
\end{equation*}
We say, then, that the singular control $u=(u_1,u_2)$ is of \emph{intrinsic order two}.

\paragraph{Precise definitions.}
Now, following \cite{Gabasov}, let us give a precise definition of the order of a singular control.

\begin{definition}
The singular control $u=(u_1,u_2)$ defined on a subinterval $I \subset [0,t_f]$ is said to be of \emph{order $q$} if
$$
\frac{\partial}{\partial u_i} \frac{d^{k}}{dt^{k}}(h_i) = 0,\quad k=0,1,\cdots,2q-1,
$$
$$
\frac{\partial}{\partial u_i} \frac{d^{2q}}{dt^{2q}}(h_i) \neq 0, \quad 
\det \left( \frac{\partial}{\partial u} \frac{d^{2q}}{dt^{2q}}\Phi \right) \neq 0,\quad 
i=1,2,
$$
along $I$. The control $u$ is said to be of \emph{intrinsic order} $q$ if, moreover, the vector fields satisfy 
$$
[g_i, \mathrm{ad}^k f.g_i] \equiv 0,\quad k=1,\cdots,2q-2,\quad i=1,2.
$$
\end{definition}

The condition of a nonzero determinant guarantees that the optimal control can be computed from the $2q$-th time derivative of the switching function. Note that, in the definition, it is required that the two components of the control have the same order. 

We next recall the Goh and generalized Legendre-Clebsch conditions (see \cite{Goh,Kelley,Krener}). It is worth noting that in \cite{Krener}, the following higher-order necessary conditions are given even when the components of the control $u$ have different orders.

\begin{lemma}\label{necconds0}
\emph{(higher-order necessary conditions)}
Assume that a singular control $u=(u_1,u_2)$ defined on $I$ is of order $q$ and is optimal. Then the Goh condition
$$
\frac{\partial}{\partial u_j} \frac{d^{k}}{dt^{k}}(h_i) = 0,\quad k=0,1,\cdots,2q-1,\quad i,j=1,2,\quad i \neq j,
$$
must be satisfied along $I$. Moreover, the matrix of which the $(i,j)$-th component is
$$
(-1)^q \frac{\partial}{\partial u_j} \frac{d^{2q}}{dt^{2q}}(h_i) ,\quad i,j=1,2,
$$
is symmetric and nonpositive along $I$ (generalized Legendre-Clebsch Condition).
\end{lemma}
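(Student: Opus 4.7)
The plan is to derive both conditions from a careful analysis of the second-order necessary optimality condition, following the Goh--Kelley--Krener variational strategy. I fix an optimal singular extremal $(x(\cdot),p(\cdot),p^0,u(\cdot))$ on the subinterval $I$ and consider admissible compactly supported perturbations $\delta u=(\delta u_1,\delta u_2)$ vanishing outside some $[t_1,t_2]\subset I$. Since $H$ is affine in $u$, the first variation of the cost vanishes identically along the singular arc (this is precisely what makes it singular), so optimality forces the second variation $\delta^2 J[\delta u]$ to be nonpositive for all such admissible $\delta u$. The whole argument consists in rewriting $\delta^2 J$ in a form where the Poisson brackets $\frac{\partial}{\partial u_j}\frac{d^k}{dt^k}(h_i)$ appear explicitly as coefficients.

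The technical tool is the iterated Goh transformation: set $y_i^{(1)}(t)=\int_{t_1}^t \delta u_i(s)\,ds$ and $y_i^{(k+1)}(t)=\int_{t_1}^t y_i^{(k)}(s)\,ds$, and perform successive integrations by parts in $\delta^2 J$. After the $k$-th step ($k\le 2q-1$), the off-diagonal terms of $\delta^2 J$ acquire coefficients of the form $\frac{\partial}{\partial u_j}\frac{d^k}{dt^k}(h_i)$ evaluated along the reference extremal. The Goh condition is then obtained by localization: choosing perturbations concentrated in a shrinking neighborhood of any $\tau\in I$, if any such mixed coefficient at $\tau$ were nonzero, one could tune $\delta u_1$ and $\delta u_2$ so that the corresponding indefinite bilinear form becomes strictly positive, contradicting $\delta^2 J\le 0$. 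Hence all mixed terms up to order $2q-1$ must vanish identically along $I$, which is exactly the claimed Goh condition; its symmetry in $(i,j)$ reflects the antisymmetry of the Poisson bracket $\{h_i,h_j\}=\langle p,[g_i,g_j](x)\rangle$ combined with the Jacobi identity.

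Once the Goh identities are in force, all boundary terms generated by $q$ successive integrations by parts vanish, and $\delta^2 J$ reduces to a purely quadratic expression in the highest iterated antiderivative $y^{(q)}=(y_1^{(q)},y_2^{(q)})$, with $2\times 2$ coefficient matrix $A(t)$ whose $(i,j)$ entry equals $(-1)^q\frac{\partial}{\partial u_j}\frac{d^{2q}}{dt^{2q}}(h_i)$. Symmetry of $A(t)$ is inherited from the symmetry of the Hessian of $J$ in the perturbation variables. Nonpositivity follows by a Riemann--Lebesgue style localization: fixing $\tau\in I$ and $\xi\in\R^2$, one constructs bump-like perturbations whose $q$-th antiderivative approximates $\xi\mathbf{1}_{[\tau,\tau+\varepsilon]}$ and then lets $\varepsilon\to 0$, forcing $\langle A(\tau)\xi,\xi\rangle\le 0$ for every $\xi$.

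The main obstacle, as I see it, is the careful bookkeeping of the successive integrations by parts: at each level one must verify that the newly appearing boundary contributions vanish, and this is precisely where the Goh conditions \emph{of the preceding orders} are needed. For that reason I would organize the argument as an induction on the order $q$, establishing the Goh conditions of order $k$ and the vanishing of the corresponding boundary terms simultaneously, and only at the very end reading off the generalized Legendre--Clebsch inequality from the quadratic form in $y^{(q)}$. The multi-input case requires slightly more care than Kelley's original single-input derivation because one must handle two-parameter families of perturbations and track how the cross-brackets $\{h_i,\mathrm{ad}^k h_0.h_j\}$ (with $i\ne j$) interact, but once the Goh transformation is set up the computation is a matter of systematic expansion.
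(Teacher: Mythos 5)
The paper does not actually prove this lemma: it is stated as a recalled classical result, with the remark that it can be found in Goh (1966), Kelley--Kopp--Moyer (1967) and Krener (1977), the latter even covering the case where the two control components have different orders. So there is no in-paper proof against which to measure your argument; the relevant comparison is with the cited references, and your second-variation strategy (Goh transformation iterated $q$ times, localization to read off coefficients) is precisely the one used there, so the overall route is the expected one.

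That said, two points in your sketch are thinner than the argument can afford. First, the mechanism that forces the mixed coefficient to vanish is not merely that the form is ``indefinite'': after one Goh transformation the cross term has the specific antisymmetric structure $\int c(t)\,\bigl(y_1\dot y_2 - y_2\dot y_1\bigr)\,dt$, and the reason this is incompatible with a sign condition on the second variation is that fast oscillations of $(y_1,y_2)$ with a controlled phase shift make this integral blow up in \emph{both} directions while leaving all lower-order contributions bounded. That is the actual content of Goh's lemma, and it is what you need to state explicitly rather than ``one could tune $\delta u_1$ and $\delta u_2$.'' It is also what makes the induction on $k$ go through: once the antisymmetric part is killed at level $k$, the symmetric part can be integrated by parts to feed into the level-$(k+1)$ quadratic form, and only then does the next cross bracket appear as an antisymmetric coefficient again. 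Second, your remark that the symmetry of the final matrix ``reflects the antisymmetry of $\{h_1,h_2\}$ combined with the Jacobi identity'' conflates two different things: the Goh identities require \emph{vanishing}, not symmetry, and the symmetry of the $2\times 2$ matrix $(-1)^q\,\partial_{u_j}\,d^{2q}h_i/dt^{2q}$ is itself a consequence of the Goh identities at orders $< 2q$ together with the bracket calculus, and needs its own (short) verification; it is not automatic from ``symmetry of the Hessian of $J$.'' Also be careful with the sign of the second-variation inequality: with the paper's convention $p^0\leq 0$ and a time-minimization, you should check whether the reduced quadratic form must be $\leq 0$ or $\geq 0$ before extracting the sign $(-1)^q$ in the Legendre--Clebsch inequality, since this is exactly where the factor $(-1)^q$ enters.
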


In the problem $\MTCP$, as we will see, it happens that singular controls are of intrinsic order $2$, and that $ [ g_1,g_2 ] = 0$, $[ g_1,[f,g_2 ]] = 0$, and $[ g_2,[f,g_1 ]] = 0$,
so that the conditions given in the above definition yield $[ g_1,[f,g_1 ]] = 0$, $[ g_2,[f,g_2 ]] = 0$, $[ g_1,\mathrm{ad}^2 f.g_1 ] = 0$, $[ g_2,\mathrm{ad}^2 f.g_2] = 0$,
$\langle p,  [ g_1,\mathrm{ad}^3 f.g_1 ](x) \rangle \neq 0$, $\langle p,  [ g_2,\mathrm{ad}^3 f.g_2](x) \rangle \neq 0$, and
$$
\langle p,  [ g_1,\mathrm{ad}^3 f.g_1 ](x) \rangle \langle p,  [ g_2,\mathrm{ad}^3 f.g_2 ](x) \rangle 
- \langle p,  [ g_2,\mathrm{ad}^3 f.g_1 ](x) \rangle \langle p,  [ g_1,\mathrm{ad}^3 f.g_2 ](x) \rangle \neq 0,
$$
and we have the following higher-order necessary conditions, that will be used in the study of the problem $\MTCP$.

\begin{corollary} \label{necconds}
We assume that the optimal trajectory $x(\cdot)$ contains a singular arc, defined on the subinterval $I$ of $[0,t_f]$, associated with a control $u=(u_1,u_2)$ of intrinsic order $2$. If the vector fields satisfy $[g_1,g_2] = 0$, $[g_i,[f,g_j]]  = 0$, for $i,j=1,2$, then the Goh condition
$$
\langle p(t), [g_1,\mathrm{ad} f.g_2] (x(t)) \rangle = 0, \quad
\langle p(t), [g_1,\mathrm{ad}^2 f.g_2] (x(t)) \rangle = \langle p(t), [g_2,\mathrm{ad}^2 f.g_1] (x(t)) \rangle = 0,
$$
and the generalized Legendre-Clebsch condition (in short, GLCC )
$$
\langle p(t),[g_i,\mathrm{ad}^3f.g_i](x(t))\rangle \leq 0, \quad i=1,2,
$$
$$
\langle p(t),[g_1,\mathrm{ad}^3f.g_2](x(t))\rangle = \langle p(t),[g_2,\mathrm{ad}^3f.g_1](x(t))\rangle
$$
must be satisfied along $I$. Moreover, we say that the strengthened GLCC  is satisfied if we have a strict inequality above, that is, $\langle p(t),[g_i,\mathrm{ad}^3f.g_i](x(t))\rangle < 0$.
\end{corollary}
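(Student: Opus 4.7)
The plan is to apply Lemma~\ref{necconds0} with $q=2$ and translate the abstract conditions $\partial_{u_j}\,d^kh_i/dt^k = 0$ into the stated Lie-bracket expressions by iteratively computing time derivatives of $h_i$ along the Hamiltonian flow. The main computational tool is the identity $\dot F=\{H,F\}$ for $F\in C^\infty(T^*M)$ with $H=h_0+u_1h_1+u_2h_2+p^0$; iterating this while holding $u$ fixed (as prescribed by the definition of the order of a singular control) expresses $h_i^{(k)}$ as a polynomial in $(u_1,u_2)$ whose coefficients, via $\{\langle p,X\rangle,\langle p,Y\rangle\}=\langle p,[X,Y]\rangle$, become pairings of $p$ with iterated Lie brackets of $f,g_1,g_2$.

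The low-order Goh conditions reduce almost immediately. From $\{h_1,h_2\}=\langle p,[g_1,g_2]\rangle=0$ and antisymmetry $\{h_i,h_i\}=0$ one gets $\dot h_i=\{h_0,h_i\}$, independent of $u$; differentiating once more and using $\{h_j,\{h_0,h_i\}\}=\langle p,[g_j,[f,g_i]]\rangle=0$ for all $i,j$ yields $\ddot h_i=\mathrm{ad}^2 h_0.h_i$, again $u$-independent. Hence the Goh conditions at orders $0,1,2$ hold automatically, which in particular accounts for $\langle p,[g_1,\mathrm{ad}f.g_2]\rangle=\langle p,[g_1,[f,g_2]]\rangle=0$. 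At order~$3$,
\[
\dddot h_i = \mathrm{ad}^3 h_0.h_i + u_1\{h_1,\mathrm{ad}^2 h_0.h_i\} + u_2\{h_2,\mathrm{ad}^2 h_0.h_i\},
\]
so $\partial_{u_j}\dddot h_i = \langle p,[g_j,\mathrm{ad}^2 f.g_i]\rangle$, and Lemma~\ref{necconds0} forces $\langle p,[g_1,\mathrm{ad}^2 f.g_2]\rangle = \langle p,[g_2,\mathrm{ad}^2 f.g_1]\rangle = 0$ along $I$; the diagonal pairings $\langle p,[g_i,\mathrm{ad}^2 f.g_i]\rangle$ actually vanish identically on $M$, since applying the Jacobi identity to the triple $g_i,f,[f,g_i]$ together with $[g_i,[f,g_i]]=0$ gives $[g_i,\mathrm{ad}^2 f.g_i]\equiv 0$.

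For the GLCC one computes $h_i^{(4)}$ by applying $\{H,\cdot\}$ to the above expression for $\dddot h_i$; the result expands into a main piece $\mathrm{ad}^4 h_0.h_i + u_1\{h_1,\mathrm{ad}^3 h_0.h_i\} + u_2\{h_2,\mathrm{ad}^3 h_0.h_i\}$ plus additional terms each carrying a factor $\{h_k,\mathrm{ad}^2 h_0.h_i\}$ with $k\in\{1,2\}$. After taking $\partial_{u_j}$ and restricting to $I$, those additional terms vanish by the Goh conditions just established, leaving
\[
\partial_{u_j}h_i^{(4)}\big|_I = \{h_j,\mathrm{ad}^3 h_0.h_i\} = \langle p,[g_j,\mathrm{ad}^3 f.g_i]\rangle.
\]
Since $(-1)^q=1$ for $q=2$, Lemma~\ref{necconds0} then asserts that the $2\times 2$ matrix with entries $\langle p,[g_j,\mathrm{ad}^3 f.g_i]\rangle$ is symmetric and nonpositive along $I$, which yields simultaneously the symmetry $\langle p,[g_1,\mathrm{ad}^3 f.g_2]\rangle=\langle p,[g_2,\mathrm{ad}^3 f.g_1]\rangle$ and the diagonal inequalities $\langle p,[g_i,\mathrm{ad}^3 f.g_i]\rangle\leq 0$. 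The main obstacle is the fourth-order bookkeeping: one must carefully track that every $u$-dependent contribution to $\partial_{u_j}h_i^{(4)}$ other than $\{h_j,\mathrm{ad}^3 h_0.h_i\}$ carries a factor that has already been shown to vanish on the singular arc.
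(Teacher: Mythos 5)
Your overall strategy — iterate $\dot F=\{H,F\}$ with $u$ frozen and feed the resulting coefficients into Lemma~\ref{necconds0} — is the same as the paper's (the paper's proof is exactly the informal discussion of Section~\ref{sec32}). The low-order part of your argument is correct and even a bit cleaner than the paper's: the Goh conditions at orders $0,1,2$ are indeed automatic under the stated bracket hypotheses, and the Jacobi-identity step giving $[g_i,\mathrm{ad}^2 f.g_i]\equiv 0$ is exactly right.

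The gap is in the fourth-order step, and it is a genuine one. When you compute $h_i^{(4)}=\{H,h_i^{(3)}\}$ with $h_1^{(3)}=\mathrm{ad}^3h_0.h_1+u_2\{h_2,\mathrm{ad}^2h_0.h_1\}$, the ``additional terms'' you refer to do not carry $\{h_k,\mathrm{ad}^2h_0.h_i\}$ as a \emph{factor}; they are nested Poisson brackets of the form $\{h_0,\{h_2,\mathrm{ad}^2h_0.h_1\}\}$, $\{h_1,\{h_2,\mathrm{ad}^2h_0.h_1\}\}$, $\{h_2,\{h_2,\mathrm{ad}^2h_0.h_1\}\}$ (and, after $\partial_{u_2}$, these appear with coefficients $1$, $u_1$, $2u_2$). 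The fact that $\{h_2,\mathrm{ad}^2h_0.h_1\}$ vanishes \emph{along} $I$ says nothing about its Poisson bracket with another function along $I$, so ``vanish by the Goh conditions just established'' is not a valid justification. To close the gap you need two additional ingredients: (i) show via Jacobi and the hypotheses $[g_1,g_2]=0$, $[g_i,[f,g_j]]=0$ that $\{h_m,\{h_k,\mathrm{ad}^2 h_0.h_i\}\}\equiv 0$ \emph{identically} for $m,k\in\{1,2\}$ (e.g.\ for $m=1$, $k=2$, $i=1$: $[g_1,[g_2,\mathrm{ad}^2 f.g_1]]=0$ because $[g_1,\mathrm{ad}^2 f.g_1]\equiv 0$ and $[g_1,g_2]=0$; for $m=k=2$, use $[g_2,\mathrm{ad}^2 f.g_1]=[\mathrm{ad}f.g_1,\mathrm{ad}f.g_2]$ identically, then Jacobi again); and (ii) with those terms killed identically, the remaining term $\{h_0,\{h_2,\mathrm{ad}^2 h_0.h_1\}\}$ is exactly $\frac{d}{dt}\{h_2,\mathrm{ad}^2 h_0.h_1\}$ along the arc, which vanishes on $I$ since $\{h_2,\mathrm{ad}^2 h_0.h_1\}\equiv 0$ on $I$. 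Only then do you obtain $\partial_{u_j}h_i^{(4)}\big|_I=\langle p,[g_j,\mathrm{ad}^3 f.g_i]\rangle$. You flagged ``the fourth-order bookkeeping'' as the main obstacle, which is the right instinct, but the argument as written does not supply it.
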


Corollary \ref{necconds} follows from Lemma \ref{necconds0} and from the arguments developed in the previous informal discussion. It will be used in Section \ref{ext_sin_chat}.

We next investigate the singular junctions for the problem \eqref{pb_ocp}, and the chattering phenomenon.

%%%%%%%%%%%%%%%%
\subsection{Chattering phenomenon}
One can find in \cite{McDanell} some results on the junction between an optimal regular arc and an optimal singular arc, for single-control affine systems, among which a result stating that, if the singular arc is of even order and if the control is discontinuous at the junction, then the junction must be nonanalytical (meaning that the control is not piecewise analytic in any neighborhood of the junction).
In \cite{SCHATTLER,ZELIKIN}, it is proved that such a nonanalytical junction between a regular arc and a singular arc of intrinsic order two causes chattering (see also \cite{ZTC}). When the control takes values in the unit disk, explicit analytic expressions for some optimal trajectories of linear-quadratic problems were given, e.g., in \cite{Marchal,ZELIKIN}.
However these results cannot be applied to $\MTCP$ because the control system is bi-input and the cost functional is the time; they are anyway a good source of inspiration to establish the results of that section.
The following result is valid for general bi-input control-affine systems.
 
\begin{theorem} \label{theorem_chattering}
Consider the optimal control problem \eqref{pb_ocp}.
Let $(x(\cdot),p(\cdot),p^0,u(\cdot))$ be an optimal extremal lift on $[0,t_f]$. We assume that $u$ is singular of order two along an open interval $I \subset [0,t_f]$, and we denote this control by $u_s=(u_{1s},u_{2s})$. 
We assume that $\Vert u_s(t)\Vert  < 1$ (i.e., the singular control does not saturate the constraint) and that 
$\frac{\partial }{\partial u_1} \frac{d^4}{dt^4} h_2(x(t),p(t)) = 0$
along $I$. 
Then the optimal control $u$ must switch infinitely many times at the junction with the singular arc. In other words, there is a chattering phenomenon, which is due to the connection of a regular arc with a singular arc of higher-order.
\end{theorem}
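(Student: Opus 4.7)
The plan is to argue by contradiction. Suppose that on some one-sided neighborhood $(\bar{t}-\delta,\bar{t})$ of the junction time $\bar{t}$ separating the singular arc on $I$ from the adjacent regular arc, the switching function $\Phi=(h_1,h_2)$ has only finitely many zeros. Let $\tau_N$ be the last of them; on $(\tau_N,\bar{t})$ the control is then smooth and given by $u(t)=\Phi(t)/\|\Phi(t)\|$, hence $\|u(t)\|=1$ identically.

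First, I would accumulate the vanishing identities along $I$ produced by Corollary~\ref{necconds}, the order-two assumption, and the extra hypothesis $\partial_{u_1}(d^4/dt^4)h_2=0$. These are $h_i=0$, $\{h_0,h_i\}=0$, $\{h_1,h_2\}=0$, $\mathrm{ad}^2 h_0.h_i=0$, the Goh cross-derivative relations $\{h_j,\mathrm{ad}\,h_0.h_i\}=\{h_j,\mathrm{ad}^2 h_0.h_i\}=0$ for $i\neq j$, and $\{h_1,\mathrm{ad}^3 h_0.h_2\}=\{h_2,\mathrm{ad}^3 h_0.h_1\}=0$, the last pair obtained from the extra hypothesis combined with the symmetry claim in Lemma~\ref{necconds0}. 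By continuity of the extremal $(x,p)$ these identities persist at $t=\bar{t}$. Substituting them into the iterated Poisson brackets for $\dot h_i,\ddot h_i,h_i^{(3)}$ derived from $H=h_0+u_1h_1+u_2h_2$, every $u$-dependent term is multiplied by a bracket that vanishes at $\bar{t}$, giving $\Phi(\bar{t}^{-})=\dot\Phi(\bar{t}^{-})=\ddot\Phi(\bar{t}^{-})=\Phi^{(3)}(\bar{t}^{-})=0$.

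Next, I would inspect the fourth derivative. The previous cancellations reduce it to the diagonal form
\[
h_i^{(4)}(\bar{t}^{-})=A_i+c_i\,u_i(\bar{t}^{-}),\qquad i=1,2,
\]
with $A_i=\langle p,\mathrm{ad}^4 f.g_i\rangle(\bar{t})$ and $c_i=\langle p,[g_i,\mathrm{ad}^3 f.g_i]\rangle(\bar{t})<0$ by the strengthened GLCC. The singular identity $h_i^{(4)}\equiv 0$ on $I$ forces $A_i=|c_i|\,u_{s,i}(\bar{t})$. On the regular side the limiting direction $u^{*}=\lim_{t\to\bar{t}^{-}}u(t)=\Phi^{(4)}(\bar{t}^{-})/\|\Phi^{(4)}(\bar{t}^{-})\|$ exists (the Taylor expansion $\Phi(t)=\tfrac{(t-\bar t)^4}{24}\Phi^{(4)}(\bar t^{-})+o((t-\bar t)^4)$ makes the direction well-defined as soon as $\Phi^{(4)}(\bar t^{-})\neq 0$). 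Writing $u^{*}_i\|\Phi^{(4)}(\bar{t}^{-})\|=A_i+c_iu^{*}_i$ and solving yields
\[
u^{*}_i=\frac{|c_i|}{\|\Phi^{(4)}(\bar{t}^{-})\|+|c_i|}\,u_{s,i}(\bar{t}),
\]
so that $|u^{*}_i|\leq|u_{s,i}(\bar{t})|$ componentwise, and therefore $\|u^{*}\|\leq\|u_s(\bar{t})\|<1$. This contradicts the unit-norm identity $\|u^{*}\|=1$ inherited from $u=\Phi/\|\Phi\|$ on $(\tau_N,\bar{t})$. The finite-switching assumption is thus untenable, and infinitely many zeros of $\Phi$ must accumulate at $\bar{t}$.

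The main obstacle is the clean justification of the fourth-order expansion on the regular side: since $u$ depends nonsmoothly on the vanishing $\Phi$, one must check that the four successive differentiations of $h_i$ can be carried out up to the one-sided limit $t\to\bar{t}^{-}$, and that the limiting direction $u^{*}$ is well-defined. The degenerate case $\Phi^{(4)}(\bar{t}^{-})=0$ has to be handled separately (either by passing to higher-order Taylor coefficients or by reading the fixed-point equation $A_i+c_iu^{*}_i=0$ directly, which yields $u^{*}=u_s(\bar{t})$ and still contradicts $\|u^{*}\|=1$). The decoupling of the fourth-order bracket matrix provided by $\partial_{u_1}(d^4/dt^4)h_2=0$, together with the strict signs of the $c_i$ from the strengthened GLCC, are precisely what reduce the matching to the scalar inequality above, realizing in the bi-input affine setting the classical chattering mechanism of Fuller-Marchal-Zelikin \cite{Marchal,ZELIKIN}.
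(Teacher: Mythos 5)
Your proposal is correct in substance and reaches the same contradiction that the paper does, but by a somewhat different route at the decisive step. Where the paper fixes a sign pattern for the regular control near the junction (e.g.\ $u_i=h_i/\Vert\Phi\Vert\geq 0$, $i=1,2$), identifies a component $k$ with $u_{ks}<u_k$, and shows directly that $h_k^{(4)}(\tau^+)<0$ forces $h_k<0$ just after the junction (hence $u_k<0$, a sign contradiction, repeated over the other sign cases), you instead posit the limiting direction $u^*=\lim_{t\to\bar t^-}u(t)$, write the diagonal fourth-order relations as the fixed-point system $u_i^*\Vert\Phi^{(4)}(\bar t^-)\Vert=A_i+c_iu_i^*$, use $A_i=-c_iu_{s,i}$ from the singular side and $c_i<0$ from the GLCC to get $|u_i^*|\leq|u_{s,i}|$ componentwise, and conclude $\Vert u^*\Vert\leq\Vert u_s\Vert<1$, contradicting $\Vert u^*\Vert=1$. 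This packages the paper's case-by-case sign argument into a single norm inequality, which is arguably cleaner; what the paper's version buys in return is that it does not need $u^*$ to exist as a well-defined vector, only that the components keep a fixed sign on a one-sided interval. You correctly flag the one real soft spot — the one-sided limit $u^*$ (equivalently $\Phi^{(4)}(\bar t^-)$) must be assumed to exist, and the degenerate case $\Phi^{(4)}(\bar t^-)=0$ must be treated separately — and you handle the latter correctly; the paper has the same implicit assumption when it writes $u_k(\tau)$ for the right limit. One small imprecision: you cite Corollary \ref{necconds}, which carries the extra structural hypotheses $[g_1,g_2]=0$ and $[g_i,[f,g_j]]=0$ that Theorem \ref{theorem_chattering} does not assume; the vanishing identities you need ($\{h_1,h_2\}=0$, $\{h_j,\mathrm{ad}\,h_0.h_i\}=0$, $\{h_j,\mathrm{ad}^2 h_0.h_i\}=0$, etc.) actually follow from the order-two definition together with the abstract Goh/GLCC statements in Lemma \ref{necconds0}, which is what the paper's proof invokes and what keeps the theorem at its stated level of generality. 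With that citation corrected, your argument is a valid variant of the paper's proof.
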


\begin{proof}
Since the singular control is of order two, it follows from the definition that
$$
\frac{\partial }{\partial u_i} \frac{d^k}{dt^k} h_i(x(t),p(t)) = 0,\quad k=0,\cdots,3,\quad i=1,2, \qquad
\frac{\partial }{\partial u_i} \frac{d^4}{dt^4} h_i(x(t),p(t)) \neq 0.
$$
Thus, we get from $\frac{\partial }{\partial u_i} \frac{d^4}{dt^4} h_i(x(t),p(t)) \neq 0$ and Lemma \ref{necconds0} that 
$$
\frac{\partial }{\partial u_j} \frac{d^k}{dt^k} h_i(x(t),p(t)) = 0,\quad k=0,\cdots,3,\quad i,j=1,2,\, i\neq j,
$$
and
$$
\frac{\partial }{\partial u_i} \frac{d^4}{dt^4} h_i(x(t),p(t)) < 0, \quad 
\frac{\partial }{\partial u_1} \frac{d^4}{dt^4} h_2(x(t),p(t)) = \frac{\partial }{\partial u_2} \frac{d^4}{dt^4} h_1(x(t),p(t)),
$$
along the singular arc $I$.
By assumption, we have $\frac{\partial }{\partial u_1} \frac{d^4}{dt^4} h_2(x(t),p(t)=0$, and hence we can write $h_i^{(4)}(x(t),p(t)) = a_{i0}(x(t),p(t)) + u_{is} a_{ii}(x(t),p(t))$ with $a_{ii}(x(t),p(t)) = \frac{\partial }{\partial u_i} \frac{d^4}{dt^4} h_i(x(t),p(t)) < 0$.

Without loss of generality, we consider a concatenation of a singular arc with a regular arc at time $\tau \in I$. Assume that for some $\varepsilon > 0$ the control u is singular along $(-\varepsilon+\tau,\tau)$, and that, along $(\tau,\tau+\varepsilon)$, the control $u=(u_1,u_2)$ is given by $u_i= h_i/\Vert  \Phi \Vert  \geq 0$, $i=1,2 $. It can be easily seen from the assumption that $\Vert u_s\Vert  < 1$ that there exists at least one component of the singular control that is smaller than the same component of the regular control, i.e., $u_{ks} < u_k$ for $k=1$ or $k=2$.
Then, it follows that
\begin{equation} \label{hk4}
\begin{split}
h_k^{(4)} (\tau)
    = & \ a_{k0}(x(\tau),p(\tau)) + u_{k}(\tau) a_{kk}(x(\tau),p(\tau)) \\
\leq & \ a_{k0}(x(\tau),p(\tau)) + u_{ks}(\tau^-) a_{kk}(x(\tau),p(\tau)) 
          = h_k^{(4)} (\tau^-) = 0 .
\end{split}
\end{equation}
Hence the switching function $h_k$ has a local maximum at $t=\tau$ and is nonpositive along the interval $(\tau,\tau+\varepsilon)$. It follows from the maximization property of the Hamiltonian that $u_k \leq 0$. This is a contradiction. 
If, instead, we assume $u_i= h_i/\Vert  \Phi \Vert  \leq 0$, $i=1,2 $ over $(\tau,\tau+\varepsilon)$, then there must exists a control component $u_{ks}$ that is larger than $u_{ks}$, i.e., $u_{ks} > u_k$, and then we obtain $h_k^{(4)} (\tau) \geq h_k^{(4)} (\tau^-) =0$, which yields $u_k \geq 0$ and thus a contradiction.
Then, if we assume $u_i= h_i/\Vert  \Phi \Vert  < 0$ and $u_j= h_j/\Vert  \Phi \Vert  > 0$, $i,j=1,2$, $i \neq j$, we will have either $u_{is} < u_i$ which gives a contradiction with the sign of $u_i$, or $u_{is} \geq u_i$ and $u_{js} < u_j$ which gives a contradiction with the sign of $u_j$.
A similar reasoning can be done for regular-singular type concatenations. 

Recall that the extremal is said singular if $\Vert  \Phi(t) \Vert  = \sqrt{h_1^2(t)+h_2^2(t)} = 0$, $t\in I$. Thus, the obtained contradiction indicates that the concatenation of a singular arc with a regular arc violates the PMP and thus there exists a chattering arc when trying to connect a regular arc with a singular arc. 
\end{proof}

\begin{remark}
Note that, in this result, we have assumed that $\Vert  u_s\Vert < 1$. 
In the (nongeneric) case where the singular control saturates the constraint, in order to get the same result we need to assume that the strengthened GLCC  is satisfied at the junction point, i.e., $a_{ii}(x(\tau),p(\tau)) < 0$, and the control is discontinuous at the singular junction.

In addition, we have assumed that $\frac{\partial }{\partial u_1} \frac{d^4}{dt^4} h_2(x(t),p(t)) = 0$. Actually, if $\frac{\partial }{\partial u_1} \frac{d^4}{dt^4} h_2(x(t),p(t)) \neq 0$, then singular and regular extremals can be connected without chattering. For example, \eqref{hk4} gives
\begin{equation*}
\begin{split}
h_k^{(4)} (\tau)
    = &\ a_{k0}(x(\tau),p(\tau)) + u_{k}(\tau) a_{kk}(x(\tau),p(\tau)) +  u_{m} (\tau) a_{km}(x(\tau),p(\tau))\\
\leq &\ a_{k0}(x(\tau),p(\tau)) + u_{ks}(\tau^-) a_{kk}(x(\tau),p(\tau)) +  u_{m} (\tau) a_{km}(x(\tau),p(\tau)) \\
        &  = h_k^{(4)} (\tau^-) + a_{km}(x(\tau),p(\tau)) (u_{m}(\tau)  - u_{ms}(\tau) )= a_{km}(x(\tau),p(\tau)) (u_{m}(\tau)  - u_{ms}(\tau) ) .
 \end{split}
\end{equation*}
where $a_{km}(x(\tau),p(\tau)) =  \frac{\partial }{\partial u_m} \frac{d^4}{dt^4} h_k(x(t),p(t)) $, $k,m=1,2$, $k \neq m$. In contrast to the previous reasoning, now the fact that $a_{km}(x(\tau),p(\tau)) (u_{m} - u_{ms}) > 0$ does not raise any more a contradiction.
\end{remark}

In the next section, we analyze the regular, singular and chattering extremals for the problem $\MTCP$ by using the results presented previously.

%%%%%%%%%%%%%%%%%%%%%%%%%%%%
\section{Geometric analysis of the extremals of $\MTCP$}\label{Chp_Gae}
In this section, we classify the switching points by their contact with the switching surface, and we establish that the optimal singular arcs of the $\MTCP$, if they exist, cause chattering.

%%%%%%%%%%%%%%
\subsection{Regular extremals}\label{C_gare}
\paragraph{Normal extremals.}
Here, we consider normal extremals and we take $p^0 = -1$.
Let us consider the system \eqref{sys_multi_affine}, with the vector fields $f$, $g_1$ and $g_2$ defined by \eqref{fetgi}.
Denoting the adjoint vector by $p = (p_{v_x}, p_{v_y}, p_{v_z}, p_{\theta}, p_{\psi}, p_{\phi}, p_{\omega_x}, p_{\omega_y})$, the adjoint equations given by the PMP are
\begin{equation} \label{sys_adjoint}
\begin{split}
\dot{p}_{v_x}&= 0,\quad \dot{p}_{v_y}= 0,\quad \dot{p}_{v_z}= 0,\\
\dot{p}_{\theta}&= -a \cos \psi (p_{v_x} \cos \theta -p_{v_z} \sin \theta),\\
\dot{p}_{\psi}&= a \sin \psi \sin \theta p_{v_x} + a \cos \psi p_{v_y} + a \cos \theta \sin \psi p_{v_z}
-\sin \psi (\omega_x \sin \phi + \omega_y \cos \phi)/\cos^2 \psi p_{\theta} \\
&\qquad\qquad\qquad  -(\omega_x \sin \phi + \omega_y \cos \phi)/ \cos^2 \psi p_{\phi}, \\
\dot{p}_{\phi}& = -(\omega_x \cos \phi -\omega_y \sin \phi)/\cos \psi p_{\theta}
+(\omega_x \sin \phi + \omega_y \cos \phi) p_{\psi} \\
&\qquad\qquad\qquad -\tan \psi (\omega_x \cos \phi -\omega_y \sin \phi) p_{\phi}, \\
\dot{p}_{\omega_x}&= - \sin \phi / \cos \psi p_{\theta}- \cos \phi p_{\psi}
 -\sin \psi \sin \phi / \cos \psi p_{\phi}, \\
\dot{p}_{\omega_y}&=  - \cos \phi / \cos \psi p_{\theta}+ \sin \phi p_{\psi}
 -\sin \psi \cos \phi / \cos \psi p_{\phi} ,
\end{split}
\end{equation}
with the transversality condition
\begin{equation}\label{trc}
    p_{v_x}(t_f) \sin \theta_f \cos \psi_f - p_{v_y}(t_f) \sin \psi_f + p_{v_z}(t_f) \cos \theta_f \cos \psi_f =0 .
\end{equation}
The switching function is $\Phi(t) = (h_1(t),h_2(t)) = ( \bar{b} p_{\omega_y}(t),- \bar{b} p_{\omega_x}(t))$ and is of class $C^1$. The switching manifold $\Gamma$ is the submanifold of $\R^{16}$ of codimension two defined by
$$
\Gamma = \{ z=(x,p)\in\R^{16} \mid p_{\omega_x} = p_{\omega_y} = 0 \}.
$$
Let us fix an arbitrary reference regular extremal $z(\cdot)=(x(\cdot),p(\cdot))$ of the problem $\MTCP$.

If $z(\cdot)$ never meets $\Gamma$, then the extremal control is given by \eqref{eq_ut} along the whole extremal.

If it meets $\Gamma$ then there is a singularity to be analyzed. It is not even clear if the extremal flow is well defined when crossing such a point (we could lose uniqueness). 
Let us assume that the extremal $z(\cdot)$ meets $\Gamma$ at some time $t_0$, and we set $z_0=z(t_0)=(x_0,p_0)$.
Following \cite{BonnardCaillau, BonnardTrelat, Kupka}, we classify the regular extremals by their contact with the switching surface, i.e., if $\Phi^{(k-1)}(t_0) = 0$ and $ \Phi^{(k)}(t_0) \neq 0$ for some $k\in\N^*$, then the point $z_0$ is said to be a \emph{point of order $k$}. Without loss of generality, we assume that $t_0=0$.

Let us analyze the singularity occuring at points of order $1$, $2$, $3$ and $4$ for the problem $\MTCP$.

%%%%%%%%%% 
\paragraph{Points of order $1$.} 
%Suppose that $z_0$ is a point of order 1. We have the following result.
\begin{lemma} \label{lem_pointorder1}
We assume that $z_0$ is of order $1$. Then the reference extremal is well defined in a neighborhood of $t=0$, in the sense that there exists a unique extremal associated with the control $u=(u_1,u_2)$ passing through the point $z_0$. The control turns with an angle $\pi$ when passing through the switching surface $\Gamma$, and is locally given by
\begin{equation*} %\label{co1}
u_1(t) = \frac{a_1}{\sqrt{a_1^2+a_2^2}} \frac{t}{|t|} +\mathrm{o}(1) ,\qquad
u_2(t) = \frac{a_2}{\sqrt{a_1^2+a_2^2}} \frac{t}{|t|} +\mathrm{o}(1) ,
\end{equation*}
with $a_1 = \{h_0,h_1\}(z_0)$, $a_{2}= \{h_0,h_2\}(z_0)$.
\end{lemma}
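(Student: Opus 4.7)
My plan is to exploit the very simple structure of the controlled vector fields, namely that $g_1 = \bar b\, \partial/\partial \omega_y$ and $g_2 = -\bar b\, \partial/\partial \omega_x$ commute (they are constant in the standard frame), so $[g_1,g_2]=0$ and hence $\{h_1,h_2\}\equiv 0$ along any extremal. Using $H = h_0 + u_1 h_1 + u_2 h_2 + p^0$, differentiation gives
\begin{equation*}
\dot h_1 = \{h_0,h_1\} + u_2\{h_2,h_1\} = \{h_0,h_1\}, \qquad
\dot h_2 = \{h_0,h_2\} + u_1\{h_1,h_2\} = \{h_0,h_2\},
\end{equation*}
so $\dot\Phi$ is \emph{independent of the control}. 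This is the key observation that makes the order-$1$ analysis tractable.

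Next I would Taylor expand around $t_0=0$. Since $z(\cdot)$ is continuous and $\Phi(0)=0$, and since $\dot h_i(0)=a_i$ with $a_i=\{h_0,h_i\}(z_0)$ given by the formula above, we have
\begin{equation*}
h_i(t) = a_i\, t + \mathrm{o}(t), \quad i=1,2, \qquad \|\Phi(t)\| = |t|\sqrt{a_1^2+a_2^2}\,(1+\mathrm{o}(1)),
\end{equation*}
where by assumption $(a_1,a_2)\neq(0,0)$ since $z_0$ is of order $1$. Plugging into the maximization formula $u_i(t)=h_i(t)/\|\Phi(t)\|$ (valid on both sides of $t_0$ because $\Phi$ vanishes only at $t_0$ in a small neighborhood) yields
\begin{equation*}
u_i(t) = \frac{a_i}{\sqrt{a_1^2+a_2^2}}\,\frac{t}{|t|} + \mathrm{o}(1), \qquad i=1,2,
\end{equation*}
which is the announced expansion. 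The factor $t/|t|$ flips sign at $t_0$, so the limits $u(0^-)$ and $u(0^+)$ are antipodal unit vectors; geometrically the control vector rotates instantly by angle $\pi$ as the extremal crosses $\Gamma$.

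It remains to check that the extremal is well-defined (uniquely continuable) through $t_0$. Because $\dot\Phi(0)\neq 0$, the implicit function theorem shows that $t_0$ is an isolated zero of $\Phi$, so the feedback $u(z)=\Phi/\|\Phi\|$ is smooth on both sides of $\Gamma$ in a neighborhood of $z_0$ and the extremal system is a classical ODE on each side. The right-hand side of the Hamiltonian system is bounded (it is affine in $u$ with $\|u\|\leq 1$) and admits well-defined limits as $t\to 0^\pm$ given by the two antipodal control values computed above; therefore we can glue the unique solution on $(-\varepsilon,0)$ starting from $z_0$ backwards with the unique solution on $(0,\varepsilon)$ starting from $z_0$ forwards, obtaining a unique Carathéodory solution through $z_0$. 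The main subtlety — and the only non-routine point — is precisely this gluing: one must verify that the extremal on the incoming side really does hit $\Gamma$ transversally at $z_0$ (ensured by $\dot\Phi(0)\neq 0$) so that no ambiguity arises in selecting a continuation, which is exactly what rules out the kind of accumulation of switchings that will appear in the higher-order analysis of the next subsections.
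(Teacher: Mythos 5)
Your proposal is correct and follows essentially the same approach as the paper: exploit $[g_1,g_2]=0$ so that $\dot\Phi$ is control-independent and equal to $(\{h_0,h_1\},\{h_0,h_2\})$, Taylor expand $h_i(t)=a_it+\mathrm{o}(t)$, and plug into $u=\Phi/\|\Phi\|$ to read off the sign flip. You add a slightly more detailed justification of uniqueness through the switching surface (isolated zero of $\Phi$ via $\dot\Phi(0)\neq0$, then glue the two Cauchy--Lipschitz solutions), which the paper leaves implicit, but the core argument is identical.
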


\begin{proof}
In the problem $\MTCP$, the vector fields $g_1$ and $g_2$ (defined by \eqref{fetgi}) commute, i.e., $[g_1,g_2] = 0$. By derivating the switching function, we get
$\dot{h}_1=\{ h_0,h_1 \} $ and $\dot{h}_2 = \{ h_0,h_2 \} $ along the extremal, and then, since the point is of order $1$, we have, locally, $h_1(t) = a_1 t + \mathrm{o}(t)$ and $h_2(t) = a_2 t +\mathrm{o}(t)$, with $a_1^2+a_2^2 \neq 0$, and we also have 
$p_{\omega_x}(t)=-a_2 t/\bar{b} + \mathrm{o}(t)$, $p_{\omega_y}(t)=a_1 t/\bar{b} + \mathrm{o}(t)$, $\sqrt{p_{\omega_x}(t)^2+p_{\omega_y}(t)^2}=\sqrt{a_1^2+a_2^2} |t|/\bar{b} +\mathrm{o}(t)$. The expression of the optimal control in the lemma follows.
At the crossing point, both control components change their sign, i.e., $u_i(0^-)=-u_i(0^+)$, $i=1,2$, which means that the control direction turns with an angle $\pi$ when crossing $\Gamma$.
\end{proof}

%%%%%%%%%% 
\paragraph{Points of order 2.} 

\begin{lemma} \label{lem_points2}
We assume that $z_0$ is of order $2$. Then the reference extremal is well defined in a neighborhood of $t=0$, in the sense that there exists a unique extremal associated with the control $u=(u_1,u_2)$ passing through the point $z_0$. Moreover, the switching function is of class $C^\infty$ in the neighborhood of $t=0$, the control is of class $C^\infty$, and we have
\begin{equation*}
u_1(t) = \frac{\alpha_1}{\sqrt{\alpha_1^2 + \alpha_2^2}} + \mathrm{o}(1),\qquad 
u_2(t) =\frac{\alpha_2}{\sqrt{\alpha_1^2 + \alpha_2^2}} +\mathrm{o}(1) ,
\end{equation*}
with $\alpha_1 = \{ h_0, \{ h_0,h_1\}\}(z_0)$ and $\alpha_2 = \{ h_0, \{ h_0,h_2\}\}(z_0)$.
\end{lemma}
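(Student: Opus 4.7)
The plan is to run the same strategy as in Lemma 1, but to push the Taylor expansion of the switching function $\Phi$ one order further, exploiting the specific commutation properties of the vector fields in $\MTCP$.

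First I would use $[g_1,g_2]=0$, which gives $\{h_1,h_2\}=0$, so that along any extremal
$$\dot h_i(t)=\{h_0,h_i\}(z(t)),\qquad i=1,2,$$
independently of the control. Second, a direct inspection of \eqref{fetgi} shows that $[f,g_i]$ involves only $\partial/\partial\theta,\partial/\partial\psi,\partial/\partial\phi$ with coefficients depending on $\theta,\psi,\phi$ alone, while $g_1,g_2$ differentiate only with respect to $\omega_x,\omega_y$; hence $[g_j,[f,g_i]]=0$ for all $i,j\in\{1,2\}$, so $\{h_j,\{h_0,h_i\}\}=0$. A second differentiation then yields
$$\ddot h_i(t) = \{h_0,\{h_0,h_i\}\}(z(t)),\qquad i=1,2,$$
again independently of the control. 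Since $z_0$ is of order $2$, one has $h_i(0)=\dot h_i(0)=0$ but $(\alpha_1,\alpha_2)\neq(0,0)$, so Taylor's theorem produces
$$h_i(t)=\tfrac{\alpha_i}{2}t^2+\mathrm{o}(t^2),\qquad \|\Phi(t)\|=\tfrac{t^2}{2}\sqrt{\alpha_1^2+\alpha_2^2}+\mathrm{o}(t^2),$$
and the maximization relation \eqref{eq_ut} immediately delivers the announced asymptotics for $u_1$ and $u_2$.

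The main obstacle I expect is the existence, uniqueness and $C^\infty$ regularity of the extremal at $z_0$, since the feedback $\Phi/\|\Phi\|$ is a priori singular on $\Gamma$. My plan is to desingularize: Taylor's formula with integral remainder, combined with the two control-free identities above, gives $h_i(t)=t^2 H_i(t)$ with
$$H_i(t):=\int_0^1(1-\tau)\,\{h_0,\{h_0,h_i\}\}(z(\tau t))\,d\tau,$$
so that $H_i$ is as smooth as $z$, with $H_i(0)=\alpha_i/2$. Since $\|\Phi(t)\|=t^2\sqrt{H_1^2+H_2^2}$, the feedback rewrites as $u_i=H_i/\sqrt{H_1^2+H_2^2}$, which extends smoothly across $t=0$ thanks to $(\alpha_1,\alpha_2)\ne 0$. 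The extremal equations thus reduce to a standard non-singular Hamiltonian ODE driven by a $C^\infty$ control, and a Picard-type fixed-point argument (on a small ball of continuous controls) yields a unique smooth local solution through $z_0$, together with the $C^\infty$ regularity of $\Phi$ and $u$.
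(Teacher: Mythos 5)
Your first half — using $[g_1,g_2]=0$ and $[g_j,[f,g_i]]=0$ to deduce that $\dot h_i=\{h_0,h_i\}$ and $\ddot h_i=\{h_0,\{h_0,h_i\}\}$ are control-free, then Taylor-expanding to order $2$ to read off the limiting value of the control — is exactly the paper's starting point, and it is correct.

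Where you diverge is in establishing uniqueness and $C^\infty$ regularity of the extremal through $z_0$, and your route is genuinely different. The paper proceeds by an induction on the order of differentiability: writing the $(k+1)$-th derivative of the switching function as a combination of lower-order derivatives of the control, it propagates ``$\Phi\in C^k$ and $u\in C^{k-2}$'' to ``$\Phi\in C^{k+1}$ and $u\in C^{k-1}$,'' computing the Taylor coefficients along the way. You instead desingularize: the integral Taylor remainder and the two control-free identities give $h_i(t)=t^2H_i(t)$ with $H_i$ continuous, $H_i(0)=\alpha_i/2\neq0$, so the maximizing feedback rewrites as $u=H/\|H\|$, which extends across $t=0$, and the extremal is then characterized as the unique fixed point of a causal integro-differential map to which a Picard contraction applies. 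This is a cleaner, more structural way of seeing why the singularity of $\Phi/\|\Phi\|$ on $\Gamma$ is only apparent at a point of order $2$; it also makes the uniqueness claim transparent (any extremal through $z_0$ has control-free $h_i,\dot h_i,\ddot h_i$ at $0$, hence is regular near $0$ and satisfies the same fixed-point equation). One caveat worth spelling out: Picard on a ball of continuous controls yields a unique $C^0$ solution; the announced $C^\infty$ regularity then follows by the standard bootstrap ($z\in C^k\Rightarrow H_i\in C^k\Rightarrow u\in C^k\Rightarrow z\in C^{k+1}$), which you implicitly invoke but do not state. With that bootstrap added, the argument is complete and correct, and arguably tidier than the paper's explicit coefficient-by-coefficient induction, though the latter has the side benefit of producing closed-form expressions such as $\dot u_i(0)$ that you would not get for free.
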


\begin{proof}
The vector fields $f$, $g_1$ and $g_2$, defined by \eqref{fetgi}, are such that $[g_i,[f,g_j]] = 0$, for $i,j=1,2$ (see Lemma \ref{Lieconfig}). Then, according to the calculations done in Section \ref{sec32}, we have $
\ddot{h}_1 = \{ h_0, \{ h_0,h_1\}\}$ and $\ddot{h}_2 = \{ h_0, \{ h_0,h_2\}\}$, and hence the functions  $t \mapsto h_i(t)$, $i=1,2$ are of class $C^2$ at $0$. Locally, we have $h_1(t)=\frac{1}{2}\alpha_1 t^2+\mathrm{o}(t^2)$ and $h_2(t)=\frac{1}{2}\alpha_2 t^2+\mathrm{o}(t^2)$. The expression of the optimal control follows and the control is continuous.

Differentiating again the switching function, we have $h_1^{(3)}(t) = \mathrm{ad}^3 h_0.h_1 + u_2 \{h_2, \mathrm{ad}^2 h_0.h_1\}$ and $h_2^{(3)}(t) = \mathrm{ad}^3 h_0.h_2 + u_1 \{h_1, \mathrm{ad}^2 h_0.h_2\}$, because $[g_i,[f,[f,g_i]]] = 0$, for $i=1,2$  (see Lemma \ref{Lieconfig}). Since the control is continuous, the switching function is at least of class $C^3$ at $0$. 
Hence, locally we can write $h_1(t)=\frac{1}{2}\alpha_1 t^2+ \frac{1}{6}\beta_1 t^3 +\mathrm{o}(t^3)$ and $h_2(t)=\frac{1}{2}\alpha_2 t^2+ \frac{1}{6}\beta_2 t^3 +\mathrm{o}(t^3)$, where $\beta_i = \mathrm{ad}^3 h_0.h_i(z_0) +  \alpha_j/\sqrt{\alpha_1^2+\alpha_2^2}  \{h_j, \mathrm{ad}^2 h_0.h_i\}(z_0)$ for $i,j=1,2$ and $i \neq j$. We infer that the control is at least of class $C^1$ at $0$, with $\dot{u}_1(0) = \frac{1}{6} (\alpha_2^2 \beta_1 - 2 \alpha_1\alpha_2 \beta_2)/(\alpha_1^2+\alpha_1^2)^{3/2}$ and $\dot{u}_2(0) = \frac{1}{6} (\alpha_1^2 \beta_2 - 2 \alpha_1\alpha_2 \beta_1)/(\alpha_1^2+\alpha_1^2)^{3/2}$.
We get the smoothness by an immediate induction argument: for $k>2$, assuming that the switching function is of class $C^k$ and that the control is of class $C^{k-2}$, then the $(k+1)$-th time derivative of the switching function can be written as $h_i^{(k+1)} = u_i^{(k-2)} \mathrm{term}_1 + \mathrm{term}_2$,
where $\mathrm{term}_i$, $i=1,2$ are terms involving time derivatives of the control of order lower than $k-2$. Hence the switching function is of class $C^{(k+1)}$ since $u$ is of class $C^{k-2}$, and $h_i = \sum_{p=2}^{k+1} a_{i,p} t^p + \mathrm{o}(t^{k+1})$ where all coefficients can be computed explicitly. The $(k-1)$-th time derivative of the control can be computed using the coefficients $a_{i,p}$, $p=2,\cdots,k+1$, and hence the control is of class $C^{k-1}$. The result follows.
\end{proof}

%%%%%%%%%%
\paragraph{Points of order 3.}
\begin{lemma} \label{lem_pointorder3}
We assume that $z_0$ is of order $3$. Then the reference extremal is well defined in a neighborhood of $t=0$, in the sense that there exists a unique extremal associated with the control $u=(u_1,u_2)$ passing through the point $z_0$. 
If $b_i = \mathrm{ad}^3h_0.h_i (z_0) \neq 0$ and $c = \{h_2,\mathrm{ad}^2h_0.h_1\}(z_0) \neq 0$, then
the switching function is of class $C^2$ in the neighborhood of $t=0$ and the control is discontinuous when passing through the switching surface $\Gamma$ locally, and we have
$$
u_i (t) = \frac{\beta_i^-}{\sqrt{\beta_1^{-2}+\beta_2^{-2}}} + \mathrm{o}(1),\quad t<0, \qquad
u_i (t) = \frac{\beta_i^+}{\sqrt{\beta_1^{+2}+\beta_2^{+2}}} + \mathrm{o}(1),\quad t>0,
$$
where, setting $d=\sqrt{(-c^2+b_1^2+b_2^2)}$ and $e_{ij}=- b_i c^2 + b_i b_j^2 + b_i^3$,
$$
\beta_i^- =  \frac{ - (-1)^{j} b_j c d +e_{ij} } {b_1^2+b_2^2},\qquad
\beta_i^+ = \frac{(-1)^{j} b_j c d +e_{ij}} {b_1^2+b_2^2},\qquad i,j=1,2,\quad i\neq j.
$$
\end{lemma}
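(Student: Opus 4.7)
The strategy mirrors that of Lemmas~\ref{lem_pointorder1} and~\ref{lem_points2}, but now the one-sided control limits cannot be read off from a single Taylor coefficient: they must be extracted from a self-consistent $2\times 2$ nonlinear algebraic system. Invoking the commutation relations of Lemma~\ref{Lieconfig} ($[g_1,g_2]=0$ and $[g_i,[f,g_j]]=0$ for $i,j=1,2$), I first compute along an extremal
\begin{equation*}
\dot{h}_i=\{h_0,h_i\},\qquad \ddot{h}_i=\mathrm{ad}^2h_0.h_i,\qquad h_i^{(3)}=\mathrm{ad}^3h_0.h_i+u_j\,\{h_j,\mathrm{ad}^2h_0.h_i\},\quad i\neq j.
\end{equation*}
The first three of these identities do not involve $u$, so $h_i$, $\dot h_i$ and $\ddot h_i$ are continuous regardless of the regularity of $u$; combined with the order-$3$ hypothesis $h_i(0)=\dot h_i(0)=\ddot h_i(0)=0$, this already shows that $\Phi$ is of class $C^2$ in a neighborhood of $0$.

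To pin down $u_i^\pm:=\lim_{t\to 0^\pm}u_i(t)$, a Jacobi computation on the triple $(g_i,f,[f,g_j])$, using $[g_i,[f,g_j]]=0$, reduces to $[g_i,\mathrm{ad}^2 f.g_j]=[\mathrm{ad}f.g_j,\mathrm{ad}f.g_i]$, and therefore yields the skew-symmetry $\{h_1,\mathrm{ad}^2h_0.h_2\}(z_0)=-\{h_2,\mathrm{ad}^2h_0.h_1\}(z_0)=-c$, so that the identities above simplify at $z_0$ to
\begin{equation*}
h_1^{(3)}(0^\pm)=b_1+c\,u_2^\pm,\qquad h_2^{(3)}(0^\pm)=b_2-c\,u_1^\pm.
\end{equation*}
Inserting the Taylor expansion $h_i(t)=\tfrac{t^3}{6}\,h_i^{(3)}(0^\pm)+\mathrm{o}(t^3)$ into the feedback law $u_i=h_i/\|\Phi\|$ and using $t^3/|t|^3=\pm 1$, I obtain self-consistency relations of the form $u_i^\pm=\pm h_i^{(3)}(0^\pm)/\|h^{(3)}(0^\pm)\|$. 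Imposing the necessary normalization $(u_1^\pm)^2+(u_2^\pm)^2=1$ collapses this nonlinear system to a linear one whose determinant is $\|h^{(3)}(0^\pm)\|^2+c^2$; a short computation then forces $\|h^{(3)}(0^\pm)\|=|d|$ with $d=\sqrt{b_1^2+b_2^2-c^2}$, and inverting the linear system delivers the closed-form expressions for $\beta_i^\pm$ announced in the statement (the two branches correspond to the two possible signs attached to $t^3/|t|^3$).

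For the local existence and uniqueness of the extremal through $z_0$, I observe that away from $\Phi=0$ the feedback $(u_1,u_2)$ is a smooth function of $(x,p)$, and that on each side of $t=0$ it extends continuously up to $t=0^\pm$ with the limits just computed. Cauchy-Lipschitz applied separately on $[0,\varepsilon)$ and on $(-\varepsilon,0]$ then produces unique forward and backward extremal lifts from $z_0$, whose concatenation is the desired extremal; the control suffers a single genuine jump at $t=0$ since $u^+\neq u^-$ as soon as $c\neq 0$. The step I expect to be the main obstacle is the algebraic inversion together with the feasibility check: one has to verify that the self-consistency system admits precisely the two branches corresponding to $t\to 0^\pm$ and that the hypotheses $b_i\neq 0$ and $c\neq 0$ force $d^2>0$, so that the limit controls lie in the closed unit disk and genuinely satisfy the PMP on both sides.
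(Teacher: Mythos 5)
Your proof is correct and follows essentially the same route as the paper's: compute $h_i^{(3)} = \mathrm{ad}^3h_0.h_i + u_j\{h_j,\mathrm{ad}^2h_0.h_i\}$ (with the skew-symmetry $\{h_1,\mathrm{ad}^2h_0.h_2\}=-\{h_2,\mathrm{ad}^2h_0.h_1\}$), plug the feedback $u_i=h_i/\|\Phi\|$ together with the one-sided cubic Taylor expansion of $\Phi$, and solve the resulting self-consistency system separately for $t>0$ and $t<0$. Your additions (deriving $C^2$-regularity from the absence of $u$ in the first three time-derivatives, obtaining the skew-symmetry via Jacobi rather than the direct computation of Lemma~\ref{Lieconfig}, and the one-sided Cauchy--Lipschitz remark) are welcome clarifications of a very terse original.

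One small correction to your final caveat: the hypotheses $b_i\neq 0$ and $c\neq 0$ do \emph{not} by themselves force $d^2=b_1^2+b_2^2-c^2>0$. Rather, $d^2>0$ is an implicit compatibility condition: if $b_1^2+b_2^2\leq c^2$ the self-consistency system has no real solution, so no extremal can have a genuine order-$3$ contact with $\Gamma$ at such a point, and the case addressed by the lemma simply does not arise. The paper leaves this implicit as well; it is worth spelling out, but it is not a deduction from $b_i\neq 0$, $c\neq 0$.
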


\begin{proof}
Using \eqref{fetgi} and \eqref{eq_uso2}, we have 
$h_1^{(3)} = \mathrm{ad}^3h_0.h_1 + u_2 \{h_2,\mathrm{ad}^2h_0.h_1\}$ and $h_2^{(3)} = \mathrm{ad}^3h_0.h_2- u_1 \{h_2,\mathrm{ad}^2h_0.h_1\}$ (see also Lemma \ref{Lieconfig}). 
Assuming that we have locally $h_i(t) = \frac{1}{6} \beta_i t^3 + o(t^3)$, we infer that $u_i (t) = \frac{\beta_i}{\sqrt{\beta_1^{2}+\beta_2^{2}}}\frac{t}{|t|} + o(1)$. By substituting the control into the expression of $h_i^{(3)} $, we get
$\beta_1 = b_1 + c  \frac{\beta_2}{\sqrt{\beta_1^{2}+\beta_2^{2}}}$ and $\beta_2 = b_2 - c  \frac{\beta_1}{\sqrt{\beta_1^{2}+\beta_2^{2}}}$. The result follows by solving for $t>0$ and $t<0$.
\end{proof}

\begin{remark}
If $c = \{h_2,\mathrm{ad}^2h_0.h_1\} (z_0)= 0$, then the switching function is of class $C^3$ at $0$ and the control turns with an angle $\pi$ when passing through the switching surface $\Gamma$.
\end{remark}

%%%%%%%%%%
\paragraph{Points of order $4$.}
We assume that $z_0$ is of order $4$.
If $ \{h_2,\mathrm{ad}^2h_0.h_1\}(z_0) \neq 0$, then $0 = h_1^{(3)} = b_1 + u_2 c$ and $0 = h_2^{(3)} = b_2 - u_1 c$, where $b_i = \mathrm{ad}^3h_0.h_i (z_0)$, $c=\{h_2,\mathrm{ad}^2h_0.h_1\}(z_0)$.
We have $u_1 = - b_1 / c$ and $u_2 = b_2 /c$.  If moreover 
$b_1^2 +  b_2^2 = c^2$, which indicates that the control $u_i=\alpha_i/\sqrt{\alpha_1^2+\alpha_2^2}$ can be a regular control according to the value of $h_i(t)=\frac{1}{8}\alpha_i t^4 +o(t^4)$, $i=1,2$, at time $0$, we get that $u_2/u_1=\alpha_2/\alpha_1=-b_1/b_2$, $\mathrm{sign} (\alpha_1)=b_2/c $ and $\mathrm{sign} (\alpha_2) = -b_1/c $. Then
$$
u_1(t) = \mathrm{sign} (c) \frac{b_2}{\sqrt{b_1^2+b_2^2}}+ \mathrm{o}(1),\quad
u_2(t) = -\mathrm{sign} (c) \frac{b_1}{\sqrt{b_1^2+b_2^2}}+ \mathrm{o}(1).
$$
If
\begin{equation*}
\begin{split}
\alpha_1 = h_1^{(4)}(z_0) = \{h_0,b_1\}(z_0) + u_1\{h_1,b_1\}(z_0)+u_2(\{h_2,b_1\}(z_0) +\{h_1,c\}(z_0) )\\
  +u_1u_2\{h_1,c\}(z_0) +u_2^2\{h_2,c\}(z_0) ,\\
\alpha_2 = h_2^{(4)}(z_0) = \{h_0,b_2\}(z_0) + u_1( \{h_1,b_2\}(z_0) - \{h_1,c\}(z_0))+u_2 \{h_2,b_2\}(z_0) \\
  - u_1^2\{h_1,c\}(z_0) - u_1u_2\{h_2,c\}(z_0) ,  
\end{split}
\end{equation*}
then the extremal is well defined in a neighborhood of $t=0$ and the control is continuous when passing the switching surface $\Gamma$.

If $ \{h_2,\mathrm{ad}^2h_0.h_1\}(z_0) = 0$, then $ \{h_0,b_1\}(z_0) = \{h_0,b_2\}(z_0) =0$, $\{h_1,b_2\}(z_0) =0$, $\{h_2,b_1\}(z_0) =0$ and $\{h_1,b_1\}(z_0)=\{h_2,b_2\}(z_0)$ (see the proof of Lemma \ref{lem_singulararcs} further), and we have
$h_1^{(4)}(z_0) = u_1\{h_1,b_1\}(z_0)$ and $h_2^{(4)}(z_0)  = u_2 \{h_1,b_1\}(z_0)$.
Assuming that we have locally $\Phi(t)=R_0 t^4 e^{i \alpha \mathrm{ln} |t|} +\mathrm{o}(t^4) = R_0 t^4(\cos(\alpha \mathrm{ln} |t|), \sin(\alpha \mathrm{ln} |t|))+\mathrm{o}(t^4)$ (we identify $\C=\R^2$ for convenience), with $R_0 >0$, we get $u(t)=\Phi(t)/\Vert \Phi(t)\Vert  = R_0 e^{i \alpha \mathrm{ln} |t|}$ and
$$
\Phi^{(4)} (t) = R_0(4+i\alpha)(3+i\alpha)(2+i\alpha)(1+i\alpha) e^{i \alpha \mathrm{ln} |t|} + \mathrm{o}(1),
$$
which leads to $R_0(\alpha^4-35\alpha^2+24) = \{h_1,b_1\}(z_0)$ and $R_0(-10\alpha^3+50\alpha) =  \{h_1,b_1\}(z_0)$.
It follows that $R_0=\{h_1,b_1\}(z_0)/(\alpha^4-35\alpha^2+24)$ with $\alpha \in\{ 1370/391,-871/614\}$ if $\{h_1,b_1\}(z_0)<0$, and $\alpha \in\{ 578/1493,-5650/453\}$ if $\{h_1,b_1\}(z_0)>0$.
It is clear that the uniqueness of the extremal when crossing the point $z_0$ does not hold true anymore. The switching function $\Phi(t)$ converges to $(0,0)$ when $t \to 0$, while the control switches infinitely many times when $t \to 0$. Indeed, we will see further that this situation is related the chattering phenomenon.

%%%%%%%%%%
\paragraph{Abnormal extremals.}
Abnormal extremals correspond to $p^0=0$ in the PMP. We suspect the existence of optimal abnormal extremals in the problem $\MTCP$ for certain (nongeneric) terminal conditions.
In the planar version of the problem $\MTCP$ studied in \cite{ZTC}, if the optimal control switches at least two times then there is no abnormal minimizer. We expect that the same property is still true here. We are able to prove that the singular extremals of the problem $\MTCP$ are normal (see section \ref{ext_sin_chat}), however, we are not able to establish a clear relationship between the number of switchings and the existence of abnormal minimizers  as in \cite{ZTC}. 
Thus, in our numerical simulations further, we will assume that there is at least one normal extremal for problem $\MTCP$ and compute it.
Note moreover that Lemmas \ref{lem_pointorder1}, \ref{lem_points2} and \ref{lem_pointorder3} are also valid for abnormal extremals.

%%%%%%%%%%%%%%%%%%%%%%%%%%%%%%%%%%%%%%%%%%%%%%%%%%%%%
\subsection{Singular and chattering extremals}
\label{ext_sin_chat}
Let us compute the singular arcs of the problem $\MTCP$. 
According to \cite{BonnardChyba}, the singular trajectories are feedback invariants since they correspond to the singularities of the end-point mapping. This concept is related to the feedback group induced by the feedback transformation and the corresponding control systems are said to be feedback equivalent (see \cite[Section 4]{BonnardChyba} for details). 
Recall that, equivalently, a trajectory $x(\cdot)$ associated with a control $u$ is said to be singular if the differential of the end-point mapping is not of full rank. The end-point mapping $E: \mathbf{R}^n \times \mathbf{R}\times L^\infty(0,+\infty;\mathbf{R}) \mapsto \mathbf{R}^n$ is defined as by $E(x_0,t_f,u)=x(x_0,t_f,u)$ where $t \mapsto x(x_0,t,u)$ is the trajectory solution of the control system associated to $u$ such that $x(x_0,0,u)=x_0$.

Hence, we can replace the vector fields $g_1$ and $g_2$ with $\widetilde{g}_1 = \frac{\partial}{\partial \omega_y}$ and $\widetilde{g}_2 = \frac{\partial}{\partial \omega_x}$.
Let us make precise the Lie bracket configuration of the control system associated with the vector fields $f$, $\widetilde{g}_1$ and $\widetilde{g}_2$.

\begin{lemma}\label{Lieconfig}
We have
$$
\widetilde{g}_1 =   \frac{\partial}{\partial \omega_y},
\qquad 
\widetilde{g}_2  =   \frac{\partial}{\partial \omega_x},
\qquad 
[\widetilde{g}_1, \widetilde{g}_2 ]=0 ,
$$
$$
\mathrm{ad}f. \widetilde{g}_1  =   - \cos \phi / \cos \psi \frac{\partial}{\partial \theta} +  \sin \phi \frac{\partial}{\partial \psi} - \tan \psi \cos \phi \frac{\partial}{\partial \phi}= 0 ,
$$
$$
\mathrm{ad}f. \widetilde{g}_2 =  -  \sin \phi/ \cos \psi \frac{\partial}{\partial \theta} -  \cos \phi \frac{\partial}{\partial \psi} -  \tan \psi \sin \phi \frac{\partial}{\partial \phi}= 0 ,
$$
$$
\mathrm{ad}^2 f. \widetilde{g}_1 = - \omega_x \frac{\partial}{\partial \phi} + a (\cos \theta \cos \phi + \sin \theta \sin \phi \sin \psi) \frac{\partial}{\partial v_x}   - a (\cos \phi \sin \theta- \sin \phi \cos \theta \sin \psi) \frac{\partial}{\partial v_z}  ,
$$
\begin{multline*}
\mathrm{ad}^2 f. \widetilde{g}_2 = \omega_y\frac{\partial}{\partial \phi}  + a (\cos \theta \sin \phi - \sin \theta \cos \phi \sin \psi) \frac{\partial}{\partial v_x}   -a \cos \phi \cos \psi \frac{\partial}{\partial v_y} \\
- a (\sin \phi \sin \theta + \cos \phi \cos \theta \sin \psi) \frac{\partial}{\partial v_z} ,
\end{multline*}
$$
[\widetilde{g}_1,\mathrm{ad}f. \widetilde{g}_1] = [\widetilde{g}_1,\mathrm{ad}f. \widetilde{g}_2] = [\widetilde{g}_2,\mathrm{ad}f. \widetilde{g}_1] = [\widetilde{g}_2,\mathrm{ad}f. \widetilde{g}_2] =0, 
$$
\begin{multline*}
\mathrm{ad}^3 f. \widetilde{g}_1 = \omega_x \Omega_1 / \cos \psi \frac{\partial}{\partial \theta}
- \omega_x \Omega_2 \frac{\partial}{\partial \psi} +\omega_x \tan \psi \Omega_1 \frac{\partial}{\partial \phi} \\
- a \omega_y \cos \psi \sin \theta  \frac{\partial}{\partial v_x} +  a \omega_y \sin \psi  \frac{\partial}{\partial v_y} -  a \omega_y \cos \psi \cos \theta  \frac{\partial}{\partial v_z},
\end{multline*}
\begin{multline*}
\mathrm{ad}^3 f. \widetilde{g}_2 = - \omega_y \Omega_1 /\cos \psi  \frac{\partial}{\partial \theta}  + \omega_x \Omega_2  \frac{\partial}{\partial \psi}  -  \omega_x \tan \psi \Omega_1  \frac{\partial}{\partial \phi} \\
- a \omega_x \cos \psi \sin \theta  \frac{\partial}{\partial v_x}  + a \omega_x \sin \psi  \frac{\partial}{\partial v_y} - a \omega_x \cos \psi \cos \theta  \frac{\partial}{\partial v_z},
\end{multline*}
$$
[\widetilde{g}_1,\mathrm{ad}^2 f. \widetilde{g}_1] = [\widetilde{g}_2,\mathrm{ad}^2 f. \widetilde{g}_2]=0, \qquad  [\widetilde{g}_1,\mathrm{ad}^2 f. \widetilde{g}_2] = - [\widetilde{g}_2,\mathrm{ad}^2 f. \widetilde{g}_1]= \frac{\partial}{\partial \phi},
$$
\begin{multline*}
\mathrm{ad}^4 f. \widetilde{g}_1 = -a(\omega_x^2 + \omega_y^2)(\cos \phi \cos \theta + \sin \phi \sin \psi \sin \theta) \frac{\partial}{\partial v_x} -a\cos \psi \sin \phi(\omega_x^2 + \omega_y^2)  \frac{\partial}{\partial v_y} \\
+ a(\omega_x^2 + \omega_y^2)(\cos \phi*\sin \theta - \cos \theta \sin \phi \sin \psi) \frac{\partial}{\partial v_z} + (\omega_x^3 + \omega_x \omega_y^2)\frac{\partial}{\partial \phi} ,
\end{multline*}
\begin{multline*}
\mathrm{ad}^4 f. \widetilde{g}_2 = -a(\omega_x^2 + \omega_y^2)(\cos \theta \sin \phi - \cos \phi \sin \psi \sin \theta) \frac{\partial}{\partial v_x}+a \cos \phi \cos \psi(\omega_x^2 + \omega_y^2)\frac{\partial}{\partial v_y}  \\	
+ a(\omega_x^2 + \omega_y^2)(\sin \phi \sin \theta+ \cos \phi \cos \theta \sin \psi)\frac{\partial}{\partial v_z}  +(- \omega_x^2 \omega_y - \omega_y^3)\frac{\partial}{\partial \phi},
\end{multline*}
\begin{multline*}
[\widetilde{g}_1,\mathrm{ad}^3 f. \widetilde{g}_1] = -a \cos \psi \sin \theta \frac{\partial}{\partial v_x}+ a \sin \psi \frac{\partial}{\partial v_y} -a \cos\psi \cos \theta \frac{\partial}{\partial v_z} -(\omega_x \sin \phi)/\cos \psi \frac{\partial}{\partial \theta}\\
- \omega_x \cos \phi \frac{\partial}{\partial \psi} -\omega_x \sin \phi \tan \psi \frac{\partial}{\partial \phi},
\end{multline*}
\begin{multline*}
[\widetilde{g}_1,\mathrm{ad}^3 f. \widetilde{g}_2] = -2 a \omega_y (\cos \theta \sin \phi - \cos \phi \sin \psi \sin \theta) \frac{\partial}{\partial v_x}+ 2 a \omega_y \cos \phi \cos \psi \frac{\partial}{\partial v_y} \\
+ 2a\omega_y(\sin \phi \sin \theta + \cos \phi \cos \theta \sin \psi) \frac{\partial}{\partial v_z}+ (- \omega_x^2 - 3\omega_y^2) \frac{\partial}{\partial \phi},
\end{multline*}
\begin{multline*}
[\widetilde{g}_2,\mathrm{ad}^3 f. \widetilde{g}_1] = -2 a \omega_x(\cos \phi \cos \theta + \sin \phi \sin \psi \sin \theta)\frac{\partial}{\partial v_x}  -2 a \omega_x \cos \psi \sin \phi \frac{\partial}{\partial v_y} \\
+ 2a\omega_x(\cos \phi \sin \theta - \cos \theta \sin \phi \sin \psi)\frac{\partial}{\partial v_z}+(3\omega_x^2 + \omega_y^2) \frac{\partial}{\partial \phi},
\end{multline*}
\begin{multline*}
[\widetilde{g}_2,\mathrm{ad}^3 f. \widetilde{g}_2] = -a \cos \psi \sin \theta \frac{\partial}{\partial v_x}+a\sin \psi \frac{\partial}{\partial v_y} -a\cos \psi \cos \theta \frac{\partial}{\partial v_z} -(\omega_y \cos \phi)/\cos \psi \frac{\partial}{\partial \theta}\\
+ \omega_y \sin \phi \frac{\partial}{\partial \psi}-\omega_y \cos \phi \tan \psi \frac{\partial}{\partial \phi},
\end{multline*}
where $\Omega_1=\omega_x\cos\phi-\omega_y\sin\phi$ and $\Omega_2=\omega_x\sin\phi+\omega_y\cos\phi$.
Moreover, we have
$$
\dim \mathrm{Span}\big( \widetilde{g}_1 , \widetilde{g}_2 ,  \mathrm{ad}f.\widetilde{g}_1,  \mathrm{ad}f.\widetilde{g}_2, \mathrm{ad}^2 f.\widetilde{g}_1,\mathrm{ad}^2f.\widetilde{g}_2 \big)=6 .
$$
\end{lemma}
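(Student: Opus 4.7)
The plan is to reduce every assertion in the statement to a mechanical coordinate computation by exploiting one structural observation: since $\widetilde g_1 = \partial/\partial\omega_y$ and $\widetilde g_2 = \partial/\partial\omega_x$ have constant coefficients, the general bracket formula $[X,Y]^k = \sum_j (X^j\partial_j Y^k - Y^j\partial_j X^k)$ collapses, for every smooth $Y = \sum_k Y^k \partial_{x_k}$ on $\R^8$, to
\[
[\widetilde g_1,Y] = \sum_k (\partial_{\omega_y} Y^k)\,\partial_{x_k}, \qquad [\widetilde g_2,Y] = \sum_k (\partial_{\omega_x} Y^k)\,\partial_{x_k}.
\]
Thus $[\widetilde g_1,\widetilde g_2]=0$ is immediate, and $\mathrm{ad}f.\widetilde g_1 = -\partial_{\omega_y}f$, $\mathrm{ad}f.\widetilde g_2 = -\partial_{\omega_x}f$ are read off term by term from \eqref{fetgi}, since only the coefficients of $\partial_\theta,\partial_\psi,\partial_\phi$ in $f$ depend on $\omega_x,\omega_y$. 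As these vector fields no longer contain $\omega_x,\omega_y$, a second application of the identity yields $[\widetilde g_i,\mathrm{ad}f.\widetilde g_j]=0$ for all $i,j\in\{1,2\}$.

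Next I would iterate: $\mathrm{ad}^k f.\widetilde g_i = [f,\mathrm{ad}^{k-1}f.\widetilde g_i]$ is computed by the full bracket formula (no shortcut is available here), whereas every bracket of the form $[\widetilde g_i,\mathrm{ad}^k f.\widetilde g_j]$ again reduces to a componentwise $\omega$-derivative. This scheme produces in turn $\mathrm{ad}^2 f.\widetilde g_i$; the four brackets $[\widetilde g_i,\mathrm{ad}^2 f.\widetilde g_j]$, where only the linear monomials $-\omega_x$ and $\omega_y$ in the $\partial_\phi$-components survive, giving the announced $\pm\partial_\phi$; then $\mathrm{ad}^3 f.\widetilde g_i$ and $\mathrm{ad}^4 f.\widetilde g_i$; and finally the four brackets $[\widetilde g_i,\mathrm{ad}^3 f.\widetilde g_j]$. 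The obstacle here is not conceptual but clerical, namely managing the rapidly expanding trigonometric expressions; recurrent use of $\sin^2+\cos^2=1$ and the abbreviations $\Omega_1 = \omega_x\cos\phi - \omega_y\sin\phi$, $\Omega_2 = \omega_x\sin\phi + \omega_y\cos\phi$ introduced in the statement keep the formulas compact, and the Jacobi identity supplies a convenient cross-check at each order.

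For the dimension claim I would write the six vector fields as columns of an $8\times 6$ matrix in the ordered basis $(\partial_{v_x},\partial_{v_y},\partial_{v_z},\partial_\theta,\partial_\psi,\partial_\phi,\partial_{\omega_x},\partial_{\omega_y})$ and exhibit a nonvanishing $6\times 6$ minor. The matrix is block-triangular in the partition $v\,|\,\theta\psi\phi\,|\,\omega$ dictated by the construction: the pair $(\widetilde g_1,\widetilde g_2)$ is the identity on the $\omega$-block; the pair $(\mathrm{ad}f.\widetilde g_1,\mathrm{ad}f.\widetilde g_2)$ lives in the $(\theta,\psi,\phi)$-block with nondegenerate $(\theta,\psi)$-minor
\[
\det \begin{pmatrix} -\cos\phi/\cos\psi & -\sin\phi/\cos\psi \\ \sin\phi & -\cos\phi \end{pmatrix} = \frac{1}{\cos\psi};
\]
and the pair $(\mathrm{ad}^2 f.\widetilde g_1,\mathrm{ad}^2 f.\widetilde g_2)$ injects independent components into $(v_x,v_y,v_z)$, since the $v_y$-component of $\mathrm{ad}^2 f.\widetilde g_1$ vanishes while that of $\mathrm{ad}^2 f.\widetilde g_2$ equals $-a\cos\phi\cos\psi$, making their projections to the $v$-plane linearly independent as soon as $\mathrm{ad}^2 f.\widetilde g_1$ itself has a nonzero $v$-component. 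Under the standing nonsingularity assumption $\cos\psi\neq 0$ (together with the generic conditions $\cos\phi\neq 0$ and nonvanishing of the $v$-projection of $\mathrm{ad}^2 f.\widetilde g_1$), the $6\times 6$ minor factors as the product of three nonzero $2\times 2$ sub-determinants, so the six vector fields are pointwise linearly independent, proving $\dim \mathrm{Span}=6$.
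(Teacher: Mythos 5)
Your overall plan is the right one and, since the paper states Lemma~\ref{Lieconfig} without proof (treating it as a routine computation), it is in fact the only plan available. The observation that brackets with the constant-coefficient fields $\widetilde g_1=\partial_{\omega_y}$, $\widetilde g_2=\partial_{\omega_x}$ reduce to componentwise $\omega$-derivatives, so that $\mathrm{ad}f.\widetilde g_i=-\partial_{\omega_i'}f$ and $[\widetilde g_i,\mathrm{ad}^k f.\widetilde g_j]=\partial_{\omega_i'}(\mathrm{ad}^k f.\widetilde g_j)$, is exactly the right organizing principle and immediately delivers the vanishing brackets $[\widetilde g_i,\widetilde g_j]$, $[\widetilde g_i,\mathrm{ad}f.\widetilde g_j]$, $[\widetilde g_i,\mathrm{ad}^2 f.\widetilde g_i]$.

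One concrete issue you should be aware of: if you actually carry out the step $\mathrm{ad}^2 f.\widetilde g_1=[f,\mathrm{ad}f.\widetilde g_1]$, the $v_y$-component does not vanish. Writing $X=\mathrm{ad}f.\widetilde g_1$ with $X^\psi=\sin\phi$, one finds
$$
[f,X]^{v_y}=-X^\psi\,\partial_\psi f^{v_y}=-\sin\phi\cdot(-a\cos\psi)=a\sin\phi\cos\psi ,
$$
so the stated formula for $\mathrm{ad}^2 f.\widetilde g_1$ is missing the term $a\sin\phi\cos\psi\,\partial_{v_y}$. This omission is confirmed by internal consistency: the lemma's $\mathrm{ad}^3 f.\widetilde g_1$ has $v_y$-component $a\omega_y\sin\psi$, which arises precisely from differentiating this missing $v_y$-coefficient by $f$. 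Your dimension argument, as written, inherits this gap: based on the stated (incorrect) formula, the $v$-projection of $\mathrm{ad}^2 f.\widetilde g_1$ vanishes when $\cos\phi=0$ and $\sin\psi=0$, which is why you were forced to invoke ``generic conditions.'' With the corrected term included, the $v$-projections of $\mathrm{ad}^2 f.\widetilde g_1$ and $\mathrm{ad}^2 f.\widetilde g_2$ are each of norm $a$ and one checks that they are mutually orthogonal, so the rank-6 claim holds pointwise wherever $\cos\psi\ne 0$, with no extra genericity required. I recommend fixing the formula, re-running the $\mathrm{ad}^3$, $\mathrm{ad}^4$ steps with it, and replacing your block-minor count by the observation that the $v$-projections have constant nonzero norm and zero inner product.
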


\begin{lemma} \label{lem_singulararcs}
In the problem $\MTCP$, let us assume that $(x(\cdot),p(\cdot),p^0,u(\cdot))$ is a singular arc along the subinterval $I$, which is locally optimal in $C^0$ topology. 
Then we have $u=(u_1,u_2)=(0,0)$ along $I$, and $u$ is a singular control of intrinsic order two. Moreover, the extremal must be normal, i.e., $p^0 \neq 0$, and the GLCC
\begin{equation}\label{lille1532}
a + g_x\sin\theta\cos\psi-g_y\sin\psi+g_z\cos\theta\cos\psi \geq 0,
\end{equation}
must hold along $I$.
\end{lemma}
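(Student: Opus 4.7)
The plan is to differentiate the switching functions $h_1 = \bar{b}\, p_{\omega_y}$ and $h_2 = -\bar{b}\, p_{\omega_x}$ successively along the singular arc $I$, read off conditions on the adjoint from the Lie bracket identities collected in Lemma~\ref{Lieconfig}, and invoke the Goh and generalized Legendre--Clebsch conditions of Corollary~\ref{necconds}. By feedback invariance of singular extremals (noted just before the lemma), I would work with the equivalent vector fields $\widetilde g_1 = \partial/\partial\omega_y$ and $\widetilde g_2 = \partial/\partial\omega_x$, which absorbs the factor $\bar{b}$.

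The identity $h_1 = h_2 = 0$ on $I$ gives $p_{\omega_x} = p_{\omega_y} = 0$. Because $[g_1,g_2] = 0$ and $[g_i,[f,g_j]] = 0$ (Lemma~\ref{Lieconfig}), the first two derivatives of $h_i$ are control-free. Solving the linear system $\dot h_1 = \dot h_2 = 0$ in $(p_\theta, p_\psi, p_\phi)$ yields $p_\psi = 0$ and $p_\theta = -\sin\psi\, p_\phi$. Since $[\widetilde g_1, \mathrm{ad}^2 f.\widetilde g_2] = \partial/\partial\phi$ is nonzero, the Goh condition of Corollary~\ref{necconds} forces $p_\phi = 0$, and hence $p_\theta = 0$. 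The remaining components $\vec p_v := (p_{v_x}, p_{v_y}, p_{v_z})$ are constants (by the adjoint equations), and the equations $\ddot h_i = 0$ and $h_i^{(3)} = 0$ supply scalar algebraic constraints on $\vec p_v$ involving the attitude $(\theta,\psi,\phi)$ and the angular velocities $(\omega_x,\omega_y)$.

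The fourth derivative $h_i^{(4)}$ finally brings the control in, via the Lie brackets $[g_j,\mathrm{ad}^3 f.g_i]$ and the higher-order drift $\mathrm{ad}^4 f.g_i$ from Lemma~\ref{Lieconfig}. Substituting the previously derived adjoint constraints, the system $h_i^{(4)} = 0$ for $i=1,2$ reduces to $-a\bar{b}^2\, u_i\,(\vec p_v\cdot \hat{z}_b) = 0$, where $\hat{z}_b = (\sin\theta\cos\psi, -\sin\psi, \cos\theta\cos\psi)$ is the body axis expressed in the launch frame. The non-degeneracy $\det\Delta_2 \neq 0$ intrinsic to order-two singularity is equivalent to $\vec p_v\cdot\hat{z}_b \neq 0$, which forces $u_1 = u_2 = 0$ and establishes that the singular control is of intrinsic order exactly two.

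For normality, I would argue by contradiction: if $p^0 = 0$, the PMP identity $h_0 = -p^0 = 0$ combined with $\vec g = (-g_0, 0, 0)$ and the relation coming from $h_i^{(3)} = 0$ first forces $p_{v_x} = 0$, and then the remaining constraints $\ddot h_i = 0$ force $p_{v_y} = p_{v_z} = 0$ at generic attitudes, contradicting $(p, p^0) \neq 0$. Finally, the GLCC of Corollary~\ref{necconds} reads $\langle p, [g_i, \mathrm{ad}^3 f.g_i]\rangle = -a\bar{b}^2\, \vec p_v\cdot\hat{z}_b \leq 0$; to convert this into the claimed state inequality I would use the Hamiltonian identity $h_0 = a\,\vec p_v\cdot\hat{z}_b + \vec p_v\cdot\vec g = -p^0 > 0$, which, combined with the constraints on $\vec p_v$, shows that $\vec p_v$ is aligned (up to a positive scalar) with the net acceleration $a\hat{z}_b + \vec g$, so that $\vec p_v\cdot\hat{z}_b \geq 0$ is equivalent to $a + g_x\sin\theta\cos\psi - g_y\sin\psi + g_z\cos\theta\cos\psi \geq 0$. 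The hardest part of the proof is precisely this last translation, together with the careful handling of the sub-case $(\omega_x,\omega_y) = (0,0)$ on $I$, where the attitude is frozen and some of the constraint equations from $h_i^{(3)} = 0$ become automatic, so that the alignment argument for $\vec p_v$ has to be adapted.
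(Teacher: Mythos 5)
Your proposal follows the same overall line of attack as the paper: differentiate the switching function along the singular arc, read off adjoint constraints from the Lie bracket identities of Lemma~\ref{Lieconfig}, and invoke the Goh and GLCC conditions of Corollary~\ref{necconds}. The deductions $p_{\omega_x}=p_{\omega_y}=0$, then $p_\psi=0$ and $p_\theta=-\sin\psi\,p_\phi$, then $p_\phi=0$ (hence $p_\theta=0$) via the Goh condition on $[\widetilde g_1,\mathrm{ad}^2 f.\widetilde g_2]=\partial/\partial\phi$, are exactly as in the paper. However, several steps are either wrong or circular.

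\emph{Alignment of $\vec p_v$.} Your claim that ``$\vec p_v$ is aligned with $a\hat z_b+\vec g$'' is incorrect. The paper, instead of extracting the $\vec p_v$ constraints from $\ddot h_i=0$ and $h_i^{(3)}=0$ as you propose (which is messier), differentiates $p_\theta\equiv 0$ and $p_\psi\equiv 0$ along the adjoint flow to obtain the clean relations $p_{v_x}=\tan\theta\,p_{v_z}$ and $p_{v_y}=-(\tan\psi/\cos\theta)\,p_{v_z}$, i.e.\ $\vec p_v=c\,\hat z_b$ with $c=p_{v_z}/(\cos\theta\cos\psi)$, \emph{not} $\vec p_v\parallel(a\hat z_b+\vec g)$. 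The final translation of the GLCC into \eqref{lille1532} still works, because then $\vec p_v\cdot\hat z_b=c$ and $-p^0=h_0=cD$ with $D:=a+g_x\sin\theta\cos\psi-g_y\sin\psi+g_z\cos\theta\cos\psi$, but your stated reason for it does not.

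\emph{Order two is not for free.} You write that ``the non-degeneracy $\det\Delta_2\neq 0$ intrinsic to order-two singularity is equivalent to $\vec p_v\cdot\hat z_b\neq 0$''; but $\det\Delta_2\neq 0$ is precisely what needs to be proved, and assuming the singular control is already of order two to establish it is circular. The paper instead proves $\langle p,[\widetilde g_i,\mathrm{ad}^3 f.\widetilde g_i]\rangle=-a\,p_{v_z}/(\cos\psi\cos\theta)\neq 0$ by contradiction: if it vanishes then $p_{v_z}=0$, hence $p_{v_x}=p_{v_y}=0$ by the constraints above, hence (with the already-established vanishing of the other adjoint components and $H=0$) $p^0=0$, contradicting $(p,p^0)\neq 0$. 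You need this contradiction argument.

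\emph{The $\omega=0$ step.} You treat $(\omega_x,\omega_y)=(0,0)$ as a ``sub-case'' to be handled separately, but the paper shows it is \emph{forced}: differentiating $p_{v_x}=\tan\theta\,p_{v_z}$ and $p_{v_y}=-(\tan\psi/\cos\theta)\,p_{v_z}$ once more (the $p_{v_i}$ are constants) gives $\dot\theta=\dot\psi=0$, hence $\omega_x=\omega_y=0$ along $I$. There is no sub-case. Having missed this, you also gloss over why the control-free parts $\langle p,\mathrm{ad}^4 f.g_i\rangle$ and the cross-terms $\langle p,[g_j,\mathrm{ad}^3 f.g_i]\rangle$ vanish; they do, but the verification is nontrivial and your sketch just asserts ``reduces to $-a\bar b^2 u_i(\vec p_v\cdot\hat z_b)=0$''.

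\emph{Normality.} Your argument ``$h_0=0$ combined with $\vec g=(-g_0,0,0)$ and the relation from $h_i^{(3)}=0$ first forces $p_{v_x}=0$'' is not a valid chain. The paper argues directly from $H=0$: one gets $p_{v_z}=-p^0\cos\theta\cos\psi/D$, so $p^0=0$ implies $p_{v_z}=0$ and then all adjoint components vanish, contradiction.
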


\begin{proof}
Using Lemma \ref{Lieconfig}, we infer from $\Phi = 0$ and $\dot{\Phi} = 0$ that
\begin{equation}\label{Lie_1}
\begin{split}
&	\langle p, \widetilde{g}_1(x) \rangle =   p_{\omega_y} =0,\quad
	\langle p, \widetilde{g}_2(x) \rangle =   p_{\omega_x} =0,\\
&	\langle p,\mathrm{ad}f. \widetilde{g}_1(x) \rangle =   - p_{\theta} \cos \phi / \cos \psi+p_{\psi} \sin \phi -p_{\phi} \tan \psi \cos \phi = 0,\\
&	\langle p,\mathrm{ad}f. \widetilde{g}_2(x) \rangle =   - p_{\theta} \sin \phi/ \cos \psi-p_{\psi} \cos \phi -p_{\phi} \tan \psi \sin \phi = 0,
\end{split}
\end{equation}
and from $\ddot{\Phi} = 0$, that
\begin{equation}\label{Lie_2}
\begin{split}
	\langle p,\mathrm{ad}^2f.\widetilde{g}_1(x) \rangle = &    - \omega_x p_{\phi}+a (\cos \theta \cos \phi + \sin \theta \sin \phi \sin \psi) p_{v_x} \\
	    & - a (\cos \phi \sin \theta- \sin \phi \cos \theta \sin \psi) p_{v_z}  = 0, \\
	\langle p, \mathrm{ad}^2f.\widetilde{g}_2(x) \rangle = & \ \omega_y p_{\phi} +a  (\cos \theta \sin \phi - \sin \theta \cos \phi \sin \psi) p_{v_x} -a \cos \phi \cos \psi p_{v_y}\\
	    &  - a (\sin \phi \sin \theta + \cos \phi \cos \theta \sin \psi) p_{v_z} = 0, 
\end{split}
\end{equation}
along the interval $I$.
Since $\dim \mathrm{Span}\big( \widetilde{g}_1 , \widetilde{g}_2 ,  \mathrm{ad}f.\widetilde{g}_1,  \mathrm{ad}f.\widetilde{g}_2, \mathrm{ad}^2 f.\widetilde{g}_1,\mathrm{ad}^2f.\widetilde{g}_2 \big)=6$, the six equations in \eqref{Lie_1}-\eqref{Lie_2} are independent constraints along the singular arc. 
Therefore, writing $\Phi^{(3)} = 0$, we get from Theorem \ref{necconds} that
\begin{equation}\label{Lie_3}
\begin{split}
	\langle p,[\widetilde{g}_1, \mathrm{ad}^2f.\widetilde{g}_2(x)] \rangle = & p_{\phi} = 0,
	\qquad
	\langle p,[\widetilde{g}_2, \mathrm{ad}^2f.\widetilde{g}_1(x)] \rangle = -p_{\phi}  = 0,\\
	\langle p, \mathrm{ad}^3f.\widetilde{g}_1(x)\rangle= & p_{\theta} \omega_x \Omega_1 /\cos \psi 
                                             -p_{\psi}\omega_x \Omega_2
                                             +p_{\phi} \omega_x \tan \psi \Omega_1 
                                             -p_{v_x} a \omega_y \cos \psi \sin \theta \\
                                             &   
                                               +p_{v_y} a \omega_y \sin \psi 
                                               -p_{v_z} a \omega_y \cos \psi \cos \theta=0,\\
	\langle p,\mathrm{ad}^3f.\widetilde{g}_2(x)\rangle= & -p_{\theta} \omega_y \Omega_1 /\cos \psi 
                                            +p_{\psi}\omega_x \Omega_2
                                            -p_{\phi} \omega_x \tan \psi \Omega_1 
                                            -p_{v_x} a \omega_x \cos \psi \sin \theta\\
                                            & 
                                              +p_{v_y} a \omega_x \sin \psi 
                                              -p_{v_z} a \omega_x \cos \psi \cos \theta=0,\\
\end{split}
\end{equation}
These four constraints are dependent: they reduce to two functionally independent constraints. Hence, with \eqref{Lie_1}-\eqref{Lie_2}, we have $8$ independent constraints along $I$.
Now, we infer from \eqref{Lie_1}-\eqref{Lie_2}-\eqref{Lie_3} that $p_{\omega_x}=p_{\omega_y}=0$ and $p_{\theta}=p_{\psi}=p_{\phi}=0$ along $I$. Derivating $p_{\theta}=0$ and $p_{\psi}=0$, we get
\begin{equation} \label{pvx}
p_{v_x} = \tan \theta p_{v_z},\quad p_{v_y}=-\tan \psi / \cos \theta p_{v_z},
\end{equation}
and derivating again, that $\dot{\theta}=\dot{\psi}=0$. It follows that $\omega_x=\omega_y=0$. 
Using that $H=0$ along any extremal, we get
\begin{equation} \label{pvzpvy}
p_{v_z}= \frac{-p^0 \cos\theta\cos\psi}{a+g_x\sin \theta \cos \psi-g_y\sin \psi+g_z\cos \theta \cos \psi},
\end{equation}
Substituting \eqref{pvx} and \eqref{pvzpvy} into $\langle p,\mathrm{ad}^4f.\widetilde{g}_1\rangle$ and $\langle p,\mathrm{ad}^4f.\widetilde{g}_2\rangle$, we get 
\begin{equation*}\label{}
\begin{split}
	& \langle p,\mathrm{ad}^4f.\widetilde{g}_1\rangle =  \langle p,\mathrm{ad}^4f.\widetilde{g}_2\rangle = 0,
	\qquad 
	\langle p,[ \widetilde{g}_1, \mathrm{ad}^3f.\widetilde{g}_2 ](x)\rangle = \langle p,[ \widetilde{g}_2, \mathrm{ad}^3f.\widetilde{g}_1 ](x)\rangle = 0,\\
	& \langle p,[ \widetilde{g}_1, \mathrm{ad}^3f.\widetilde{g}_1 ](x)\rangle = \langle p,[ \widetilde{g}_2, \mathrm{ad}^3f.\widetilde{g}_2 ](x)\rangle = -\frac{a p_{v_z}}{\cos \psi \cos \theta}.
\end{split}
\end{equation*}
To prove that $u$ is of intrinsic order two, it suffices to prove that $\langle p,[ \widetilde{g}_i, \mathrm{ad}^3f.\widetilde{g}_i ](x)\rangle \neq 0$ along $I$. We prove it by contradiction. If $\langle p,[ \widetilde{g}_i, \mathrm{ad}^3f.\widetilde{g}_i ](x)\rangle = 0$, then necessarily $p_{v_z}=0$ and this would lead to $p_{v_x}=p_{v_y}=0$. It follows then from $H=0$ that $p^0=0$. We have obtained that $(p,p^0)=0$, which is a contradiction. 

The fact that $u=(u_1,u_2)=(0,0)$ simply follows from the fact that
\begin{equation*}
u_1 = - \{ h_1,\mathrm{ad}^3h_0.h_1\} /  \mathrm{ad}^4h_0.h_1, \qquad
u_2 = - \{ h_2,\mathrm{ad}^3h_0.h_2\} / \mathrm{ad}^4h_0.h_2.
\end{equation*}

Besides, if $p^0=0$, then $p_{v_z}=0$ and $p_{v_x}=p_{v_y}=0$, which leads to $(p,p^0)=0$ and thus to a contradiction as well. Therefore, $p^0 <0$ (i.e., the singular arc is normal), and then \eqref{lille1532} follows by applying the GLCC of Corollary \ref{necconds}.
\end{proof}

We define the singular surface $S$, which is filled by singular extremals of the problem $\MTCP$, by
\begin{multline} \label{ssurface}
S = \Big\{(x,p)\ \mid\ \omega_x=\omega_y=0, \quad p_{\theta}=p_{\psi}=p_{\phi}=p_{\omega_x}=p_{\omega_y}=0,\quad p_{v_x} = \tan \theta p_{v_z}, \\
 p_{v_z}= \frac{-p^0 \cos\theta\cos\psi}{a+g_x\sin \theta \cos \psi-g_y\sin \psi+g_z\cos \theta \cos \psi}, \quad p_{v_y}=-\tan \psi / \cos \theta p_{v_z} \Big\}.
\end{multline}
We will see, in the next section, that the solutions of the problem of order zero (defined in Section \ref{pb_auxi}) live in this singular surface $S$.

The following result, establishing chattering for the problem $\MTCP$, is a consequence of Theorem \ref{theorem_chattering}, Lemma \ref{Lieconfig} and Lemma \ref{lem_singulararcs}.

\begin{corollary} \label{cor_chattering}
For the problem $\MTCP$, any optimal singular arc cannot be connected with a nontrivial bang arc. There is a chattering arc when trying to connect a regular arc with an optimal singular arc. More precisely, let $u$ be an optimal control, solution of $\MTCP$, and assume that $u$ is singular on the sub-interval $(t_1,t_2)\subset[0,t_f]$ and is regular elsewhere. If $t_1>0$ (resp., if $t_2<t_f$) then, for every $\varepsilon>0$, the control $u$ switches an infinite number of times over the time interval $[t_1-\varepsilon,t_1]$ (resp., on $[t_2,t_2+\varepsilon]$).
\end{corollary}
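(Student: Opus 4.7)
The plan is to verify the three hypotheses of Theorem \ref{theorem_chattering} for the singular arcs of $\MTCP$, using the computations of Lemma \ref{Lieconfig} and the characterization of singular extremals given by Lemma \ref{lem_singulararcs}. Once this is done, the conclusion about infinitely many switchings at both endpoints $t_1$ (if $t_1>0$) and $t_2$ (if $t_2<t_f$) follows directly from Theorem \ref{theorem_chattering}, applied separately at each junction.

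First, I would recall from Lemma \ref{lem_singulararcs} that any optimal singular arc of $\MTCP$ is normal, is driven by the constant control $u_s=(u_{1s},u_{2s})=(0,0)$, and is of intrinsic order two. In particular, $\Vert u_s(t)\Vert=0<1$, so the non-saturation hypothesis of Theorem \ref{theorem_chattering} is satisfied. Moreover, by the chain of identities established in the proof of Lemma \ref{lem_singulararcs}, the adjoint variables satisfy $p_{\omega_x}=p_{\omega_y}=p_\theta=p_\psi=p_\phi=0$ along $I$ and $(p_{v_x},p_{v_y},p_{v_z})$ is determined by \eqref{pvx}--\eqref{pvzpvy} on the singular surface $S$ defined in \eqref{ssurface}.

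Second, I would verify the compatibility condition $\frac{\partial}{\partial u_1}\frac{d^4}{dt^4}h_2(x(t),p(t))=0$ along $I$. Using the Lie bracket identities of Lemma \ref{Lieconfig}, namely $[g_1,g_2]=0$, $[g_i,[f,g_j]]=0$ and $[g_i,\mathrm{ad}^2 f.g_i]=0$ for $i,j=1,2$, a direct computation of the successive time derivatives of $h_2$ shows that
\begin{equation*}
\frac{\partial}{\partial u_1}\frac{d^4}{dt^4}h_2(x,p) = \langle p,[g_1,\mathrm{ad}^3 f.g_2](x)\rangle.
\end{equation*}
Lemma \ref{Lieconfig} provides the explicit expression of the vector field $[\widetilde{g}_1,\mathrm{ad}^3 f.\widetilde{g}_2]$ (the same holds for $g_1,g_2$ up to the feedback invariance recalled before Lemma \ref{Lieconfig}), and substituting into this bracket the constraints from the proof of Lemma \ref{lem_singulararcs} (in particular $\omega_x=\omega_y=0$, $p_\phi=0$, together with \eqref{pvx}--\eqref{pvzpvy}) yields
\begin{equation*}
\langle p(t),[g_1,\mathrm{ad}^3 f.g_2](x(t))\rangle = 0 \qquad \forall t\in I,
\end{equation*}
exactly as already noted in the proof of Lemma \ref{lem_singulararcs}. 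Hence the required vanishing condition $\frac{\partial}{\partial u_1}\frac{d^4}{dt^4}h_2=0$ holds on $I$.

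Finally, all hypotheses of Theorem \ref{theorem_chattering} being satisfied on the interval $I=(t_1,t_2)$, it asserts that the optimal control $u$ must switch infinitely many times at any junction between the singular arc and an adjacent regular (bang) arc. Applying this to the left endpoint $t_1$ when $t_1>0$ gives infinitely many switchings on $[t_1-\varepsilon,t_1]$ for every $\varepsilon>0$, and applying the symmetric (regular-to-singular) version of the argument at $t_2<t_f$ gives infinitely many switchings on $[t_2,t_2+\varepsilon]$. The main thing to be careful about is the verification of the Goh-type identity $\langle p,[g_1,\mathrm{ad}^3 f.g_2](x)\rangle=0$ along $I$: this is where the bi-input structure could a priori obstruct chattering (as emphasized in the remark after Theorem \ref{theorem_chattering}), and it is here that the specific geometry of the vector fields $f,g_1,g_2$ of $\MTCP$ encoded in Lemma \ref{Lieconfig} plays the decisive role.
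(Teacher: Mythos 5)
Your proof is correct and follows the same logical chain as the paper, which itself merely states that the corollary ``is a consequence of Theorem \ref{theorem_chattering}, Lemma \ref{Lieconfig} and Lemma \ref{lem_singulararcs}'' without filling in the details; you have made explicit the verification of all three hypotheses of Theorem \ref{theorem_chattering} — non-saturation ($u_s=0$ so $\Vert u_s\Vert<1$), intrinsic order two, and the vanishing cross-derivative $\langle p,[g_1,\mathrm{ad}^3 f.g_2](x)\rangle=0$ on the singular surface $S$, the last of which is indeed established inside the proof of Lemma \ref{lem_singulararcs} after substituting \eqref{pvx}--\eqref{pvzpvy}.
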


This result is important for solving the problem $\MTCP$ in practice. Indeed, when using numerical methods to solve the problem, the chattering control is an obstacle to convergence, especially when using an indirect approach (shooting). The existence of the chattering phenomenon in the problem $\MTCP$ explains well why the indirect methods may fail for certain terminal conditions.

Note that, in the planar version of the problem $\MTCP$ studied in \cite{ZTC}), one can give sufficient conditions on the initial conditions under which the chattering phenomenon does not occur. 
Unfortunately, we are not able to derive such conditions in the general problem $\MTCP$.

%%%%%%%%%%%%%%%%%%%%%%%%%%%
\section{Numerical approaches}\label{Chp_continuation}
In this section, we design two different numerical strategies for solving the problem $\MTCP$: one is based on combining indirect methods with numerical continuation, and the other is based on a direct transcription approach. The first one may be successfully implemented when dealing with solutions without chattering arcs, and the second one is more appropriate to compute solutions involving chattering arcs.
However, both approaches are difficult to initialize successfully because the problem $\MTCP$ is of quite high dimension, is highly nonlinear, and moreover, as a main reason, the system consists of fast (Euler angles and angular velocity) and slow (orbit velocity) dynamics at the same time. 

The occurrence of chattering arcs is an obstacle to convergence. Especially for indirect methods, the chattering phenomenon raises an important difficulty due to the numerical integration of the discontinuous Hamiltonian system. Direct transcription approaches provide a sub-optimal solution of the problem that has a finite number of switchings based on a (possibly rough) discretization.
Actually, in case of chattering, we are also able to provide a sub-optimal solution with our indirect approach, by stopping the continuation before it would fail due to chattering. Though the sub-optimal solutions provided in this way may be ``less optimal" compared with those given by a direct approach, in practice they can be computed in a much faster way and also much more accurately.

%%%%%%%%%%%%%%%%%%%%%%%%%%%
\subsection{Indirect method and numerical continuation}\label{C_ic}
The idea of this continuation procedure is to use the (easily computable) solution of a simpler problem, that we call herefter the \emph{problem of order zero}, in order then to initialize an indirect method for the more complicated problem $\MTCP$. 
Then we are going to plug this simple, low-dimensional problem in higher dimension, and then come back to the initial problem by using appropriate continuations.

This method actually gives an optimal solution with high accuracy. The problem of order zero defined below is used as the starting problem because the orbit movement is much slower compared with the attitude movement and it is easy to solve explicitly. As well, it is worth noting that the solution of the problem of order zero is contained in the singular surface $S$ filled by the singular solutions for the problem $\MTCP$, defined by \eqref{ssurface}.

\subsubsection{Two auxiliary problems}
\label{pb_auxi}
\paragraph{Problem of order zero.}
We define the \emph{problem of order zero}, denoted by $\OCPZ$, as a ``subproblem'' of the complete problem $\MTCP$, in the sense that we consider only the orbit dynamics and that we assume that the attitude angles (Euler angles) can be driven to the target values instantaneously. Thus, the attitude angles are considered as control inputs in that simpler problem. Denoting the rocket axial symmetric axis as $\vec{e}$ and considering it as the control vector (which is consistent with the attitude angles $\theta$, $\psi$), we formulate the problem as follows:
$$
\dot{\vec{V}} = a \vec{e}+\vec{g}, \quad
\vec{V}(0)=\vec{V}_0,\quad \vec{V}(t_f) // \vec{w},\quad  \Vert \vec{w} \Vert =1,  \qquad \min t_f ,
$$
where $\vec{w}$ is a given vector that refers to the desired target velocity direction. 
This problem is easy to solve, and the solution is the following.

\begin{lemma} \label{OCPZ_solution}
The optimal solution of $\OCPZ$ is given by 
\begin{equation*}% \label{sol_ocpz}
 \vec{e}^{\ast}=\frac{1}{a}\left(\frac{k \vec{w}-\vec{V}_0}{t_f}-\vec{g}\right), \quad
 t_f=\frac{-a_2 + \sqrt{a_2^2-4a_1a_3}}{2 a_1},\quad
 \vec{p}_v=\frac{-p^0}{a + \langle \vec{e}^{\ast},\vec{g} \rangle } \vec{e}^{\ast}.
\end{equation*}
with 
$k=\langle \vec{V}_0,\vec{w} \rangle + \langle \vec{g},\vec{w} \rangle t_f$, 
$a_1 = a^2-\Vert  \langle \vec{g},\vec{w} \rangle \vec{w}-\vec{g} \Vert ^2$, 
$a_2 = 2 ( \langle \vec{V}_0,\vec{w} \rangle \langle \vec{g},\vec{w} \rangle - \langle \vec{V}_0,\vec{g} \rangle)$, and
$a_3 = - \Vert  \langle \vec{V}_0,\vec{w} \rangle \vec{w}-\vec{V}_0 \Vert ^2$.
\end{lemma}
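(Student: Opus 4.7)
The strategy is to apply the Pontryagin maximum principle to $\OCPZ$, exploit the extremely simple structure of the dynamics (control appearing affinely, state decoupled from the right-hand side) to reduce the problem to a few algebraic relations, and then solve those relations explicitly.

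First I would set up the PMP for $\OCPZ$. The Hamiltonian reads $H(\vec{V}, \vec{p}_V, p^0, \vec{e}) = a\langle \vec{p}_V, \vec{e}\rangle + \langle \vec{p}_V, \vec{g}\rangle + p^0$. Since the right-hand side of the dynamics does not depend on $\vec{V}$, the adjoint equation gives $\dot{\vec{p}}_V = 0$, so $\vec{p}_V$ is a constant vector along the extremal. The maximization condition over admissible controls (unit vectors $\vec{e}$) then yields $\vec{e}^{\ast} = \vec{p}_V/\Vert\vec{p}_V\Vert$, which is likewise constant in time.

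Integrating with constant $\vec{e}^{\ast}$ gives $\vec{V}(t_f) = \vec{V}_0 + (a\vec{e}^{\ast}+\vec{g})t_f$, and the condition $\vec{V}(t_f) = k\vec{w}$ inverts at once to the stated formula for $\vec{e}^{\ast}$. To determine $k$, I would use the transversality condition: the target manifold $\{k\vec{w}:k\in\R\}$ is one-dimensional with tangent space $\mathrm{span}(\vec{w})$, so $\vec{p}_V \perp \vec{w}$, and hence $\langle \vec{e}^{\ast},\vec{w}\rangle = 0$. Taking the inner product of the identity $at_f\vec{e}^{\ast} = k\vec{w}-\vec{V}_0-\vec{g}t_f$ with $\vec{w}$ (using $\Vert\vec{w}\Vert=1$) then yields immediately $k = \langle \vec{V}_0,\vec{w}\rangle + \langle \vec{g},\vec{w}\rangle t_f$.

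Next, I would extract $t_f$ from the unit-norm constraint $\Vert\vec{e}^{\ast}\Vert = 1$, equivalent to $\Vert k\vec{w}-\vec{V}_0-\vec{g}t_f\Vert^{2} = a^{2}t_f^{2}$. Expanding and substituting the expression for $k$, one observes the cancellation $k^{2} - 2k\langle \vec{w},\vec{V}_0+\vec{g}t_f\rangle = -k^{2}$, after which the constraint collapses to a quadratic $a_1 t_f^{2} + a_2 t_f + a_3 = 0$. The announced forms of $a_1$ and $a_3$ then follow from the Pythagoras-type identities $\Vert\vec{g}\Vert^{2} - \langle\vec{g},\vec{w}\rangle^{2} = \Vert\langle\vec{g},\vec{w}\rangle\vec{w}-\vec{g}\Vert^{2}$ and $\Vert\vec{V}_0\Vert^{2} - \langle\vec{V}_0,\vec{w}\rangle^{2} = \Vert\langle\vec{V}_0,\vec{w}\rangle\vec{w}-\vec{V}_0\Vert^{2}$, while $a_2$ is read off directly. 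Minimality of $t_f$ selects the positive root. Finally, the adjoint is recovered from $H\equiv 0$ along the extremal (free terminal time, autonomous system): this forces $\Vert\vec{p}_V\Vert(a + \langle\vec{e}^{\ast},\vec{g}\rangle) = -p^0$, and combined with $\vec{p}_V = \Vert\vec{p}_V\Vert\vec{e}^{\ast}$ it gives the stated expression for $\vec{p}_V$.

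The main obstacle is essentially bookkeeping: packaging the expansion of $\Vert k\vec{w}-\vec{V}_0-\vec{g}t_f\Vert^{2} = a^{2}t_f^{2}$ into the clean coefficients $a_1,a_2,a_3$ of the lemma, and recognizing the two Pythagoras rewritings. There is no real analytical subtlety: abnormality is excluded as long as $a + \langle\vec{e}^{\ast},\vec{g}\rangle \neq 0$, which is ensured physically by the thrust dominating the gravity component (otherwise $p^0 = 0$ would force $\vec{p}_V = 0$, contradicting $(\vec{p}_V,p^0)\neq 0$).
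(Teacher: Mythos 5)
Your proposal is correct and follows essentially the same route as the paper's proof: apply the PMP, note that $\vec{p}_V$ is constant and nonzero (ruling out singular controls), deduce from maximization that $\vec{e}^{\ast}$ is the constant unit vector $\vec{p}_V/\Vert\vec{p}_V\Vert$, combine the terminal condition $\vec{V}(t_f)=k\vec{w}$ with transversality $\vec{p}_V\perp\vec{w}$ to recover $\vec{e}^{\ast}$ and $k$, extract $t_f$ from $\Vert\vec{e}^{\ast}\Vert=1$, and finally use $H\equiv 0$ to obtain $\vec{p}_v$. Your additional bookkeeping on the quadratic (the $-k^2$ cancellation and the Pythagoras rewritings of $a_1,a_3$) and the explicit remark on abnormality are details the paper leaves implicit, but the argument is the same.
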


\begin{proof}
The Hamiltonian is $H=p^0+\vec{p}_v (a \vec{e}+\vec{g})$, and we have $\dot{\vec{p}}_v=\vec{0}$, with $\vec{p}_v=(p_{v_x},p_{v_y},p_{v_z})^{\top}$, and $H=0$ along any extremal. It follows that $\vec{p}_v \neq \vec{0}$ (indeed otherwise we would get also $p^0=0$, and thus a contradiction). Hence there are no singular controls for this problem. The maximization condition of the PMP yields $\vec{e}^{\ast}=\vec{p}_v/\Vert \vec{p}_v\Vert $, and hence the optimal control is a constant vector. Moreover, according to the final condition $\vec{V}(t_f) // \vec{w}$, the transversality condition is $\vec{p}_v \perp \vec{w}$, hence $\langle\vec{e}^{\ast}, \vec{w} \rangle =0$, and using $\vec{V}(t_f)=\vec{V}_0+(a \vec{e}+\vec{g} ) t_f = k \vec{w}$ we get that $\vec{e}^{\ast}=\frac{1}{a}(\frac{k \vec{w}-\vec{V}_0}{t_f}-\vec{g})$.
It follows from the transversality condition that $k=\langle \vec{V}_0,\vec{w} \rangle + \langle \vec{g},\vec{w} \rangle t_f $. The expression of $t_f$ follows, using that $\Vert  \vec{e}^{\ast} \Vert ^2=1$.
Using that $H=0$, we get $\vec{p}_v=\frac{-p^0}{a + \langle \vec{e}^{\ast},\vec{g} \rangle } \vec{e}^{\ast}$.
\end{proof}

Since the vector $\vec{e}$ is expressed in the launch frame as $(\vec{e})_R = (\sin \theta \cos \psi, -\sin \psi, \cos \theta \sin \psi)^{\top} $, the Euler angles $\theta^{\ast} \in (-\pi/2,\pi/2)$ and $\psi^{\ast} \in (-\pi/2,\pi/2)$ are given by
\begin{equation} \label{sing_angles}
\theta^{\ast} = \mathrm{arctan}(e_1^{\ast}/e_3^{\ast}), \qquad \psi^{\ast}=-\mathrm{arcsin}(e_2^{\ast}),
\end{equation}
where $e_i^{\ast}$ is the $i$-th component of $\vec{e}^{\ast}$, for $i=1,2,3$.

Using the definition \eqref{ssurface} of the singular surface $S$, we check that the optimal solution of $\OCPZ$ is contained in $S$ with $\theta = \theta^\ast$, $\psi=\psi^\ast$ and $\phi=\phi^\ast$ ($\phi^\ast$ is any real number). 
Therefore, the relationship between $\OCPZ$ and $\MTCP$ is the following.

\begin{lemma}\label{lem_relationship}
The optimal solution of the problem $\OCPZ$ actually corresponds to a singular solution of $\MTCP$ with the terminal conditions given by
\begin{equation} \label{inicond_ocpztomtcp}
\begin{split}
v_x(0) = {v_{x_0}}, \quad v_y(0) = {v_{y_0}}, \quad v_z(0) = {v_{z_0}}, \\
\theta(0)=\theta^\ast, \quad \psi(0)=\psi^\ast,\quad, \phi(0)=\phi^\ast,\quad \omega_x(0) = 0,\quad \omega_y(0)=0,
\end{split}
\end{equation}
\begin{equation} \label{fincond_ocpztomtcp}
v_z(t_f)\sin \psi_f + v_y(t_f) \cos \theta_f \cos \psi_f=0,\quad v_z(t_f)\sin \theta_f-v_x(t_f)\cos \theta_f=0,
\end{equation}
\begin{equation} \label{fincond_ocpztomtcp2}
 \theta(t_f)=\theta^\ast, \quad \psi(t_f)=\psi^\ast,\quad, \phi(t_f)=\phi^\ast,\quad \omega_x(t_f) = 0,\quad \omega_y(t_f)=0.
 \end{equation}
\end{lemma}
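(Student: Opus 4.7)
The plan is to take the optimal solution $(\vec{V}(\cdot),\vec{p}_v,\vec{e}^{\ast},t_f)$ of $\OCPZ$ given by Lemma \ref{OCPZ_solution}, to extend it to a candidate extremal of $\MTCP$ on $[0,t_f]$, and then to verify that this candidate satisfies every necessary condition of the PMP for $\MTCP$ while lying in the singular surface $S$ defined by \eqref{ssurface}.

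First I would define the extension as follows. Let $\theta^{\ast},\psi^{\ast}$ be given by \eqref{sing_angles}, pick any $\phi^{\ast}\in\R$, and set $\theta(t)\equiv\theta^{\ast}$, $\psi(t)\equiv\psi^{\ast}$, $\phi(t)\equiv\phi^{\ast}$, $\omega_x(t)\equiv 0$, $\omega_y(t)\equiv 0$, with the controls $u_1(t)=u_2(t)=0$ on $[0,t_f]$ and velocity $\vec{V}(\cdot)$ from $\OCPZ$. Then I would set $p_{\theta}=p_{\psi}=p_{\phi}=p_{\omega_x}=p_{\omega_y}=0$ and keep $(p_{v_x},p_{v_y},p_{v_z})=\vec{p}_v$ constant as in $\OCPZ$. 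The initial and final conditions \eqref{inicond_ocpztomtcp}--\eqref{fincond_ocpztomtcp2} are then immediate from the definition of $\vec{e}^{\ast}$ and from $\vec{V}(t_f)\parallel\vec{w}$, choosing $\vec{w}$ so that the parallelism encodes precisely \eqref{fincond_ocpztomtcp}.

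Next I would verify that the candidate satisfies the state equation \eqref{sys_multi_affine}, which is obvious for the attitude block (all time derivatives vanish with $u_1=u_2=0$ and $\omega_x=\omega_y=0$) and, for the velocity block, reduces to $\dot{\vec{V}}=a\vec{e}^{\ast}+\vec{g}$, which is exactly the $\OCPZ$ dynamics because $(\sin\theta^{\ast}\cos\psi^{\ast},-\sin\psi^{\ast},\cos\theta^{\ast}\cos\psi^{\ast})^\top=\vec{e}^{\ast}$. Then I would check each of the eight adjoint equations \eqref{sys_adjoint}: the three velocity-adjoint equations are trivially satisfied since $\vec{p}_v$ is constant; the $\dot p_{\omega_x}$ and $\dot p_{\omega_y}$ equations vanish because $p_{\theta}=p_{\psi}=p_{\phi}=0$; the $\dot p_{\phi}$ equation vanishes because $\omega_x=\omega_y=0$ and $p_\theta=p_\psi=p_\phi=0$. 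The only nontrivial checks are $\dot p_{\theta}=0$ and $\dot p_{\psi}=0$, which require substituting the explicit expressions for $\vec{p}_v$ from Lemma \ref{OCPZ_solution} and simplifying; this boils down to the algebraic identities $p_{v_x}\cos\theta^{\ast}-p_{v_z}\sin\theta^{\ast}=0$ and $p_{v_x}\sin\psi^{\ast}\sin\theta^{\ast}+p_{v_y}\cos\psi^{\ast}+p_{v_z}\cos\theta^{\ast}\sin\psi^{\ast}=0$, both of which follow directly from $\vec{p}_v\parallel\vec{e}^{\ast}$. At this point the candidate lies in $S$: the same algebraic identities show that $p_{v_x}=\tan\theta\,p_{v_z}$, $p_{v_y}=-\tan\psi\,p_{v_z}/\cos\theta$, and using $\langle\vec{e}^{\ast},\vec{g}\rangle=g_x\sin\theta^{\ast}\cos\psi^{\ast}-g_y\sin\psi^{\ast}+g_z\cos\theta^{\ast}\cos\psi^{\ast}$ together with the formula for $\vec{p}_v$ in Lemma \ref{OCPZ_solution} gives exactly the $p_{v_z}$ expression in \eqref{ssurface}.

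Finally I would check the remaining necessary conditions: the transversality condition \eqref{trc} reduces to $\vec{p}_v\perp\vec{w}$, which is already known from $\OCPZ$; the Hamiltonian equality $H=0$ along the candidate reduces to $H_{\OCPZ}=0$ since $u_1 h_1+u_2 h_2=0$ and the attitude-block contribution vanishes; and the switching function $\Phi=(\bar{b}p_{\omega_y},-\bar{b}p_{\omega_x})$ is identically zero, so the candidate is indeed singular, with singular control $(u_1,u_2)=(0,0)$, as prescribed by Lemma \ref{lem_singulararcs}. The main obstacle I anticipate is the bookkeeping in the $\dot p_{\theta}=0$ and $\dot p_{\psi}=0$ verifications, where the cancellation uses both the proportionality $\vec{p}_v\parallel\vec{e}^{\ast}$ and the trigonometric form of $\vec{e}^{\ast}$; once this algebra is done, everything else is either a direct substitution or a consequence already established in Lemma \ref{OCPZ_solution}.
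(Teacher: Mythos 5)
Your proposal is correct and fills in exactly the verification the paper leaves implicit: the paper gives no separate proof of this lemma, instead asserting (in the sentence just before the statement) that one can ``check that the optimal solution of $\OCPZ$ is contained in $S$,'' which is precisely what your step-by-step computation establishes. Your extra checks (that the state and adjoint equations of $\MTCP$ are satisfied by the constant-attitude extension, that the two nontrivial cancellations in $\dot p_\theta=0$ and $\dot p_\psi=0$ follow from $\vec p_v\parallel\vec e^*$, that transversality reduces to $\vec p_v\perp\vec w$, and that $\Phi\equiv 0$ so the arc is singular) are all accurate and constitute exactly the ``we check'' that the authors elide, so the approach is the same and nothing is missing.
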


Due to this result, a natural idea of numerical continuation strategy consists of deforming continuously (step by step) the terminal conditions given in Lemma \ref{lem_relationship}, to the terminal conditions \eqref{OCPc_ic}-\eqref{OCPc_fc} of the problem $\MTCP$. 

However, because of the chattering phenomenon, we cannot make converge the shooting method in such a strategy. More precisely, when the terminal conditions are in the neighborhood of the singular surface $S$, the optimal extremals are likely to contain a singular arc (and thus chattering arcs). In that case, the shooting method will certainly fail due to the difficulty of numerical integration of discontinuous Hamiltonian system. Hence, we introduce hereafter an additional numerical trick and we define the following regularized problem, in which we modify the cost functional with a parameter $\gamma$, so as to overcome the problem caused by chattering.

%%%%%%%%%%%%%%%%%
\paragraph{Regularized problem.}
Let $\gamma > 0$ be arbitrary. The regularized problem \OCPReps consists of minimizing the cost functional
\begin{equation} \label{cost_opcr}
    C_\gamma = t_f + \gamma \int_0^{t_f} (u_1^2 + u_2^2) \, dt ,
\end{equation}
for the bi-input control-affine system \eqref{sys_multi_affine}, under the control constraints $-1\leq u_i\leq 1$, $i=1,2$, with terminal conditions \eqref{OCPc_ic}-\eqref{OCPc_fc}.
Note that, here, we replace the constraint $u_1^2+u_2^2\leq 1$ (i.e., $u$ takes its values in the unit Euclidean disk) with the constraint that $u$ takes its values in the unit Euclidean square. The advantage, for this intermediate optimal control problem with the cost \eqref{cost_opcr}, is that the extremal controls are then continuous.

The Hamiltonian is
\begin{equation}\label{Hamil_n}
H_\gamma=\langle p,f(x) \rangle + u_1\langle p,g_1(x) \rangle + u_2 \langle p,g_2(x) \rangle +p^0(1+\gamma u_1^2 + \gamma u_2^2),
\end{equation}
and according to the PMP, the optimal controls are
\begin{equation} \label{OCPn_u1}
u_1(t) = \mathrm{sat}(-1,-\bar{b} p_{\omega_y}(t) / (2 \gamma p^0),1),\qquad
u_2(t) = \mathrm{sat}(-1,\bar{b} p_{\omega_x}(t) / (2 \gamma p^0),1),
\end{equation}
where the saturation operator $\mathrm{sat}$ is defined by $\mathrm{sat}(-1,f(t),1)=-1$ if $f(t)\leq -1$; $1$ if $f(t)\geq 1$; and $f(t)$ if $-1\leq f(t)\leq 1$.

\medskip

As mentioned previously, one of the motivations for considering the intermediate problem \OCPReps is that the solution of $\OCPZ$ is a \emph{singular} trajectory of the full problem $\MTCP$, and hence, passing directly from $\OCPZ$ to $\MTCP$ causes difficulties due to chattering (see Corollary \ref{cor_chattering}).  The following result shows that when we embed the solutions of $\OCPZ$ into the problem \OCPReps, they are not singular.

\begin{lemma}
\label{lem_relat_ocpzetocpr}
An extremal of $\OCPZ$ can be embedded into the problem \OCPReps by setting
$$
u(t) = (0,0), \quad \theta(t)=\theta^\ast, \quad \psi(t)=\psi^\ast, \quad \phi(t)=\phi^\ast, \quad \omega_x(t) = 0, \quad \omega_y(t)=0,
$$
$$
p_{\theta}(t)=0, \quad p_{\psi}(t)= 0, \quad p_{\phi}(t)=0, \quad p_{\omega x}(t) = 0, \quad p_{\omega y}(t)=0,
$$
where $\theta^\ast$ and $\psi^\ast$ are given by \eqref{sing_angles}, 
with terminal conditions given by \eqref{inicond_ocpztomtcp} and \eqref{fincond_ocpztomtcp}-\eqref{fincond_ocpztomtcp2}. Moreover, it is not a singular extremal for the problem \OCPReps.
The extremal equations for \OCPReps are the same than for $\MTCP$, as well as the transversality conditions.
\end{lemma}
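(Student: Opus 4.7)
The plan is to verify directly that the proposed $(x(\cdot),p(\cdot),p^0,u(\cdot))$, with $p^0<0$ inherited from the $\OCPZ$ extremal, satisfies the state equations, adjoint equations, maximization condition, and transversality conditions of \OCPReps. First, with $u\equiv(0,0)$, $\omega_x\equiv\omega_y\equiv 0$, and constant attitude angles $\theta\equiv\theta^\ast$, $\psi\equiv\psi^\ast$, $\phi\equiv\phi^\ast$, the attitude equations \eqref{attitude_dynamics}--\eqref{attitude_kinetics} are automatically satisfied, and the orbit dynamics \eqref{orbit_dynamics} reduce exactly to $\dot{\vec V}=a\vec e^\ast+\vec g$, which is the $\OCPZ$ system. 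Lemma \ref{OCPZ_solution} then furnishes a velocity trajectory meeting \eqref{inicond_ocpztomtcp} at $t=0$ and \eqref{fincond_ocpztomtcp} at $t=t_f$, while \eqref{fincond_ocpztomtcp2} holds trivially because $\theta,\psi,\phi,\omega_x,\omega_y$ remain stationary.

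Next I substitute the proposed adjoint into \eqref{sys_adjoint}. Because $\omega_x=\omega_y=0$ and $p_\theta=p_\psi=p_\phi=0$, the right-hand sides of $\dot p_\phi$, $\dot p_{\omega_x}$ and $\dot p_{\omega_y}$ vanish identically, so these components remain at zero. The only non-trivial computation is $\dot p_\theta=\dot p_\psi=0$: using the explicit $\vec p_v=\frac{-p^0}{a+\langle\vec e^\ast,\vec g\rangle}\vec e^\ast$ from Lemma \ref{OCPZ_solution} together with the expressions \eqref{sing_angles} for $\theta^\ast,\psi^\ast$, one derives the identities $p_{v_x}=\tan\theta^\ast\,p_{v_z}$ and $p_{v_y}=-\tan\psi^\ast/\cos\theta^\ast\,p_{v_z}$, and plugging these into $\dot p_\theta$ and $\dot p_\psi$ makes every term cancel. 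This is exactly the content of the singular surface $S$ defined in \eqref{ssurface}, which explains the geometric meaning of the embedding. For the maximization condition, substituting $p_{\omega_x}=p_{\omega_y}=0$ into the feedback \eqref{OCPn_u1} immediately yields $u_1=u_2=0$, consistent with the prescribed control, and the saturation is inactive.

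To show the extremal is not singular for \OCPReps, I note that $\partial^2 H_\gamma/\partial u_i^2=2\gamma p^0<0$, so the strict Legendre--Clebsch condition holds: $H_\gamma$ is strictly concave in $u$ and its maximizer is uniquely determined by first-order conditions, leaving no room for the affine-system pathology of a vanishing switching function producing an indeterminate control. Finally, since the quadratic penalty $\gamma(u_1^2+u_2^2)$ does not depend on $x$, the adjoint system $\dot p=-\partial H_\gamma/\partial x$ coincides term by term with the $\MTCP$ adjoint equations \eqref{sys_adjoint}; the target manifold $M_1$ is common to both problems, so the transversality $p(t_f)\perp T_{x(t_f)}M_1$ is identical; and the free-time condition $H_\gamma(t_f)=0$ collapses on this extremal (where $u\equiv 0$) to the $\MTCP$ identity $h_0+p^0=0$. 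The only genuinely non-routine step is the algebraic verification of $p_{v_x}=\tan\theta^\ast\,p_{v_z}$ and $p_{v_y}=-\tan\psi^\ast/\cos\theta^\ast\,p_{v_z}$ in the computation of $\dot p_\theta$ and $\dot p_\psi$; once these identities are in hand, the remainder of the proof is pure substitution.
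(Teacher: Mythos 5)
Your proposal is correct and follows essentially the same route as the paper, which simply states that the embedding is "easy to verify" and relies on the strict concavity of $H_\gamma$ in $u$ (via $\partial^2 H_\gamma/\partial u_i^2 \neq 0$ when $p^0\neq 0$) to rule out singularity. You supply the verification the paper elides: the closure of the adjoint system under the ansatz, the crucial algebraic identities $p_{v_x}=\tan\theta^\ast\,p_{v_z}$ and $p_{v_y}=-\tan\psi^\ast\,p_{v_z}/\cos\theta^\ast$ coming from $\vec p_v \parallel \vec e^\ast$ (which indeed make $\dot p_\theta=\dot p_\psi=0$), and the reduction of the free-time transversality to $h_0+p^0=0$. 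One small note in your favor: you correctly compute $\partial^2 H_\gamma/\partial u_i^2 = 2\gamma p^0$, whereas the paper drops the factor of $2$; this has no bearing on the argument since only the sign matters.
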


\begin{proof}
It is easy to verify that the embedded extremal is an extremal of the problem \OCPReps and that the transversality conditions are the same. 
The control is computed from \eqref{OCPn_u1} which maximizes the Hamiltonian $H_{\gamma}$, and we have $H_{\gamma} = 0$ with $p^0=-1$. It follows from the PMP that the extremal equations are the same than for $\MTCP$.
Then, for the problem \OCPReps, we have $\frac{\partial^2 H_{\gamma} }{\partial u_i^2}= \gamma p^0 $. Note that, in this case, the control $u_i$, $i=1,2$ is singular if $\frac{\partial^2 H_{\gamma} }{\partial u_i^2}= 0$. Hence there is no normal singular extremal for the problem \OCPReps. From Lemma \ref{OCPZ_solution}, it is easy to see that $p^0 \neq 0$ and thus the extremals of $\OCPZ$ are not singular extremals of \OCPReps.
\end{proof}

%%%%%%%%%%%%%%%%%%%%%%%
\subsubsection{Strategy for solving $\MTCP$}
\label{indirecte_strategy}
\paragraph{Continuation procedure.}
The ultimate objective is to compute the optimal solution of the problem $\MTCP$, starting from the explicit, simple to compute, solution of $\OCPZ$. We proceed as follows:
\begin{itemize}
\item First, according to Lemma \ref{lem_relat_ocpzetocpr}, we embed the solution of $\OCPZ$ into \OCPReps.
For convenience, we still denote by $\OCPZ$ the problem $\OCPZ$ seen in high dimension. 
\item Then, we pass from $\OCPZ$ to $\MTCP$ by means of a numerical continuation procedure, involving three continuation parameters: the first two parameters $\lambda_1$ and $\lambda_2$ are used to pass continuously from the optimal solution of $\OCPZ$ to the optimal solution of the regularized problem \OCPReps, for some fixed $\gamma>0$, and the third parameter $\lambda_3$ is then used to pass to the optimal solution of $\MTCP$ (see Figure \ref{homotopie}).
\end{itemize}

\begin{figure}[h]
\centering 
	\includegraphics[scale=0.8]{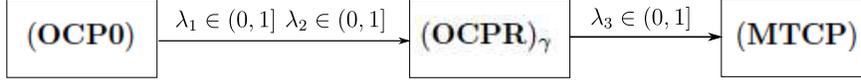}
	\caption{Continuation procedure.}
	\label{homotopie}
\end{figure}

The parameter $\lambda_1$ is used to act, by continuation, on the initial conditions, according to
$$
\theta(0) = \theta^{\ast} (1-\lambda_1) + \theta_0 \lambda_1, \quad \psi(0) = \psi^{\ast} (1-\lambda_1) + \psi_0 \lambda_1, \quad \phi(0) = \phi^{\ast} (1-\lambda_1) + \phi_0 \lambda_1, 
$$
$$
\omega_x(0) = \omega_x^{\ast} (1-\lambda_1) + {\omega_{x_0}} \lambda_1, \quad \omega_y(0) = \omega_y^{\ast} (1-\lambda_1) + {\omega_{y_0}} \lambda_1,
$$
where $\omega_x^{\ast} = \omega_y^{\ast} =0$, $\phi^{\ast} = 0$, and $\theta^{\ast}$, $\psi^{\ast}$ are calculated through equation \eqref{sing_angles}.

Using the transversality condition \eqref{trc} and the extremal equations $\dot{p}_{v_x}=0$, $\dot{p}_{v_y}=0$ and $\dot{p}_{v_z}=0$, 
the unknown $p_{v_y}$ can be expressed in terms of $p_{v_x}$ and $p_{v_z}$ as
$$
 p_{v_y} = (p_{v_x} \sin \theta_f \cos \psi_f+ p_{v_z} \cos \theta_f \cos \psi_f)/\sin \psi_f ,
$$
and hence the unknowns of the shooting problem are reduced to $p_{v_x}$, $p_{v_z}$, $p_{\theta}(0)$, $p_{\psi}(0)$, $p_{\phi}(0)$, $p_{\omega_x}(0)$, $p_{\omega_y}(0)$ and $t_f$. 
The shooting function $ \d{S_{\lambda_1}} $ for the $\lambda_1$-continuation is defined by
%\begin{equation*}
%\begin{split}
%S_{\lambda_1} = \big(
%        &  \omega_x (t_f) - \omega_x^\ast, \,\,
%            \omega_y (t_f) - \omega_y^\ast,\,\,
%            \theta (t_f) - \theta^\ast,\,\,
%            \psi (t_f) - \psi^\ast,\,\,
%            \phi (t_f) - \phi^\ast,\,\,
%        H_\gamma(t_f), \\
%        &  v_z(t_f) \sin \psi_f + v_y(t_f) \cos \theta_f \cos \psi_f,\,\,
%         v_z(t_f) \sin \theta_f - v_x(t_f) \cos \theta_f \big),
%\end{split}
%\end{equation*}
\begin{equation*}
\begin{split}
S_{\lambda_1} = \big(
        &  p_{\omega_x }(t_f) , \,\,
            p_{\omega_y} (t_f),\,\,
            p_{\theta} (t_f),\,\,
            p_{\psi} (t_f),\,\,
            p_{\phi} (t_f),\,\,
           H_\gamma(t_f), \\
        &  v_z(t_f) \sin \psi_f + v_y(t_f) \cos \theta_f \cos \psi_f,\,\,
         v_z(t_f) \sin \theta_f - v_x(t_f) \cos \theta_f \big),
\end{split}
\end{equation*}
where $H_\gamma(t_f)$ with $p^0=-1$ is calculated from \eqref{Hamil_n} and $u_1$ and $u_2$ are given by \eqref{OCPn_u1}. 
In fact, from Lemma \ref{lem_singulararcs}, we know that a singular extremal of problem $\MTCP$ must be normal, and since we are starting to solve the problem from a singular extremal, here we assume that $p^0 = -1$. 

Note that we can use $S_{\lambda_1}$ as shooting function thanks for \OCPReps. For problem $\MTCP$, if $S_{\lambda_1}=0$, then together with $\omega_x(t_f)=0$ and $\omega_y(t_f)=0$, the final point $(x(t_f),p(t_f))$ of the extremal is then lying on the singular surface $S$ defined by \eqref{ssurface} and this will cause the fail of the shooting. However, for problem \OCPReps, even when $x(t_f) \in S$, the shooting problem can still be solved.

Initializing with the solution of $\OCPZ$, we can solve this shooting problem with $\lambda_1=0$, and we get a solution of \OCPReps with the terminal conditions \eqref{inicond_ocpztomtcp}-\eqref{fincond_ocpztomtcp}  (the other states at $t_f$ being free). Then, by continuation, we make $\lambda_1$ vary from $0$ to $1$, and in this way we get the solution of \OCPReps for $\lambda_1=1$. With this solution, we can integrate extremal equations \eqref{sys_full} and \eqref{sys_adjoint} to get the values of the state variable at $t_f$. Then denote $\theta_e := \theta(t_f)$, $\psi_e := \psi(t_f)$, $\phi_e := \phi(t_f)$, $\omega_{xe} := \omega_x(t_f)$ and $\omega_{ye} := \omega_y(t_f)$. 

\medskip

In a second step, we use the continuation parameter $\lambda_2$ to act on the final conditions, in order to make them pass from the values $\theta_e$, $\psi_e$, $\phi_e$, $\omega_{xe}$ and $\omega_{ye}$, to the desired target values $\theta_f$, $\psi_f$, $\phi_f$, $\omega_{xf}$ and $\omega_{yf}$. The shooting function is
\begin{equation*}
\begin{split}
 S_{\lambda_2} = \big( &
        \omega_x(t_f) - (1-\lambda_2) \omega_{xe} -\lambda_2 \omega_{x_f}, \,
        \omega_y(t_f) - (1-\lambda_2) \omega_{ye} -\lambda_2 \omega_{y_f},\\
       & \theta(t_f)-(1-\lambda_2) \theta_e - \lambda_2 \theta_f ,\,
        \psi(t_f)-(1-\lambda_2) \psi_e - \lambda_2 \psi_f,\,
        \phi(t_f)-(1-\lambda_2) \phi_e - \lambda_2 \phi_f ,\\
       & v_z(t_f) \sin \psi_f + v_y(t_f) \cos \theta_f \cos \psi_f,\,
        v_z(t_f) \sin \theta_f - v_x(t_f) \cos \theta_f ,\,
        H_\gamma(t_f) \big) .
\end{split}
\end{equation*}
Solving this problem by making vary $\lambda_2$ from $0$ to $1$, we obtain the solution of \OCPReps with the terminal conditions \eqref{OCPc_ic}-\eqref{OCPc_fc}.

\medskip

Finally, in order to compute the solution of $\MTCP$, we use the continuation parameter $\lambda_3$ to pass from \OCPReps to $\MTCP$. We add the parameter $\lambda_3$ to the Hamiltonian $H_{\gamma}$ and to the cost functional \eqref{cost_opcr} as follows:
$$
C_\gamma = t_f + \gamma \int_0^{t_f} (u_1^2 + u_2^2)(1-\lambda_3) \, dt ,
$$
$$
H(t_f,\lambda_3)=\langle p,f \rangle+\langle p,g_1 \rangle u_1 +\langle p,g_2 \rangle u_2 +p^0 + p^0\gamma( u_1^2 +u_2^2)(1-\lambda_3).
$$
Then, according to the PMP, the extremal controls are given by $u_i=\mathrm{sat}(-1,u_{ie},1)$, $i=1,2$, where
$$
u_{1e} =   \frac{\bar{b} p_{\omega_y}}{-2 p^0 \gamma (1-\lambda_3) + \bar{b}  \lambda_3 \sqrt{p_{\omega_x}^2+p_{\omega_y}^2} } ,\quad
u_{2e} = \frac{ -\bar{b} p_{\omega_x}}{-2 p^0 \gamma (1-\lambda_3) + \bar{b}  \lambda_3 \sqrt{p_{\omega_x}^2+p_{\omega_y}^2} } .
$$
The shooting function $S_{\lambda_3}$ is defined as $S_{\lambda_2}$, replacing $H_\gamma(t_f)$ with $H_\gamma(t_f,\lambda_3)$. The solution of $\MTCP$ is then obtained by making vary $\lambda_3$ continuously from $0$ to $1$. 

\medskip

\begin{remark}\label{rem_subopti}
Note that the above continuation procedure fails in case of chattering (see Corollary \ref{cor_chattering}), and thus cannot be successful for any possible choice of terminal conditions. In particular, if chattering occurs then the $\lambda_3$-continuation is expected to fail for some value $\lambda_3 = \lambda_3^\ast<1$. But in that case, with this value of $\lambda_3$, we have generated a sub-optimal solution of the problem $\MTCP$, which appears to be acceptable and very interesting for practice. Moreover, the overall procedure is very fast and accurate. Note that the resulting sub-optimal control is continuous.
\end{remark}

%%%%%%%%%%%%%%%%%%
\subsection{Direct method}\label{C_dc}
We now propose a direct approach for solving the problem $\MTCP$, where the control is approximated by a piecewise constant control over a given time subdivision.
The solutions derived from such a method are therefore sub-optimal, in particular when the control is chattering (and in such a case the number of switchings is limited by the time step). 
Note that this approach is much more computationally demanding than the indirect one. 

Since the initialization of a direct method may also raise some difficulties, we propose the following strategy. The idea is to start from the solution of the problem $\MTCP$ with less terminal requirements, which is easy to obtain with a direct method, and then we introduce step by step the final conditions \eqref{OCPc_fc} of the problem $\MTCP$. We implement this direct approach with the software \texttt{BOCOP} and its batch optimization option (see \cite{BonnansMartinon}). 

\begin{itemize}
\item Step 1: we solve the problem $\MTCP$ with initial conditions \eqref{OCPc_ic} and final conditions
$$
\omega_{y}(t_f)=0,\quad \theta (t_f) = \theta_f ,\quad v_{z} (t_f) \sin \theta_f - v_{x} (t_f) \cos \theta_f =0.
$$
These final conditions are the ones of the planar version of $\MTCP$ in which the motion of the spacecraft is 2D (see \cite{ZTC} for details). 
Numerical simulations show that, with such terminal conditions, the problem $\MTCP$ is easy and fast to solve by means of a direct method (a constant initial guess for the discretized variables suffices to ensure convergence). %The obtained solution serves as an initial guess for the next step.

\item Then, in Steps 2, 3, 4 and 5, we add successively (and step by step) the final conditions
$v_{z} (t_f) \sin \psi_f + v_{y} (t_f) \cos \theta_f \cos \psi_f =0$, $\psi (t_f)= \psi_f$, $\phi (t_f)=\phi_f$, and $ \omega_{x}(t_f) = \omega_{xf}$, and for each new step we use the solution of the previous one as an initial guess.
\end{itemize}
At the end of this process, we have obtained the solution of the full problem $\MTCP$. Note again that this direct approach is much slower than the indirect one, and that the resulting control has many numerical oscillations (see numerical results in Section \ref{C_ca}).

%%%%%%%%%%%%%%%%%%%%%%%%%%%%%%%%%%%%
\section{Numerical results} \label{Chp_numerical}
The structure of the rocket is presented in Figure \ref{Thrust} (b). We assume that the thrust $I$ is flexible, i.e., it can turn $\pm 6^{\circ}$ in all directions, and its thrust is around $T_{att}=1400$ kN. The other thrusts are fixed with a total thrust $T_{tot}=1 \times 10^5$ kN. 
The rocket mass is $800$ t, the length of the rocket is $l_r=50$ m and its radius is $r_r=2.5$ m. Considering the rocket as a cylinder, we have $I_x=I_y=m(3 r_r^2+l_r^2 )/12$ and $I_z=m r_r^2/2$.
The parameters $a$ and $b$ in \eqref{orbit_dynamics} and \eqref{attitude_dynamics} are therefore $a=T_{tot}/m \approx 12$ and $\bar{b}=\frac{ T_{att} l_r}{2 I_x} \mu_{max}\approx 0.02$.

During the atmospheric ascent phase, the velocity of the rocket remains between several hundreds m/s and around 1000 m/s. Let $v = \sqrt{v_x^2+v_y^2+v_z^2}$ be the modulus of the velocity, and let $\psi_v$ and $\phi_v$ be the flight path angles that we use to calculate the components of the velocity in $S_R$ frame, i.e., $v_x = v \sin \theta_v \cos \psi_v$, $v_y = -v \sin \psi_v$ and $v_z = v \cos \theta_v \cos \psi_v$.
In this section, the initial values of the angles $\theta_v$ and $\psi_v$ are chosen equal to the initial values of the angles $\theta$ and $\psi$. This means that, before the maneuver, the rocket is on a trajectory with angle of attack equal to zero.

In the numerical simulations, we set ${v_{x_0}} = v_0 \sin \theta_0 \cos \psi_0$, ${v_{y_0}} = - v_0 \sin \psi_0$, ${v_{z_0}} = v_0 \cos \theta_0 \cos \psi_0$ and take the other values needed in the initial condition \eqref{OCPc_ic} and the final condition \eqref{OCPc_fc} in the following table
\begin{table} [H] 
\centering
\begin{threeparttable}[b]
    \begin{tabular}{ |l | l| l| l| l| l| l| l| l| l|}
    \hline
    \multicolumn{9}{c}{(TC1): ${\omega_{x_0}} = {\omega_{y_0}}=0$, $\theta_0=75^{\circ}$, $\psi_0 = 0.5^{\circ}$, $\phi_0 = 0^{\circ} $ }  \\ 
    \multicolumn{9}{c}{  $\omega_{x_f} = \omega_{y_f}=0$, $\theta_f=85^{\circ}$, $\psi_f = 5^{\circ}$, $\phi_f=0^{\circ}$ }  \\ 
    \hline
    \multicolumn{9}{c}{(TC2): ${\omega_{x_0}} = {\omega_{y_0}}=0$, $\theta_0=70^{\circ}$, $\psi_0 = 0.5^{\circ}$, $\phi_0 = 0^{\circ} $ }  \\ 
    \multicolumn{9}{c}{  $\omega_{x_f} = \omega_{y_f}=0$, $\theta_f=85^{\circ}$, $\psi_f = 5^{\circ}$, $\phi_f=0^{\circ}$ }  \\ 
     \hline   
    \multicolumn{9}{c}{(TC3): ${\omega_{x_0}} = {\omega_{y_0}}=0$, $\theta_0=85^{\circ}$, $\psi_0 = 0.5^{\circ}$, $\phi_0 = 0^{\circ} $ }  \\ 
    \multicolumn{9}{c}{  $\omega_{x_f} = \omega_{y_f}=0$, $\theta_f=75^{\circ}$, $\psi_f = 5^{\circ}$, $\phi_f=0^{\circ}$ }  \\ 
     \hline   
    \end{tabular}
\end{threeparttable}
\caption{ \label{tercond_table} Terminal conditions}
\end{table}
Note that $v_0$ is the module of velocity at time $0$. In the next two subsections, we will choose different values of $v_0$, and so here we do not assign to it a specific value.
Moreover, we set $\gamma=50$ as the weight of the $L_2$-norm control term in the cost functional of problem \OCPReps.

%%%%%%%%%%%%%%%%%%%%%%%%%%%%%%%%%%%%%%%%%%%%%%%%%%%%%%%%%%
\subsection{Numerical results without chattering}
\label{C_nric}
The indirect method combined with numerical continuation described in Section \ref{C_ic} is implemented using a predictor-corrector continuation method, where the prediction is made thanks to a Lagrange polynomial. The \texttt{Fortran} routines \texttt{hybrd.f} (see \cite{More}) and \texttt{dop853.f} (see \cite{Hairer}) are used, respectively, for solving the shooting problem (Newton method) and for integrating the ordinary differential equations (with prediction).

The Euler angle $\theta$ is usually called the \emph{pitch angle}, and a \emph{pitching up maneuver} designates a maneuver with terminal condition $\theta_f > \theta_0$, while a \emph{pitching down maneuver} designates a maneuver with terminal condition $\theta_f<\theta_0$. 

\paragraph{Pitching up maneuvers.}
We set $v_0=1000$ m/s and we use the numerical values denoted by (TC1) in Table \ref{tercond_table}. The components of the state variable are reported on Figure \ref{state_1000}. The optimal control, the adjoint variables $p_{\omega_x}(t)$ and $p_{\omega_y}(t)$ and the modulus of the switching function $\Phi(t)=\bar{b}(p_{\omega_y},-p_{\omega_x})$ are reported on Figure \ref{adjoint_1000}.
\begin{figure}[h]
	\includegraphics[scale=0.36]{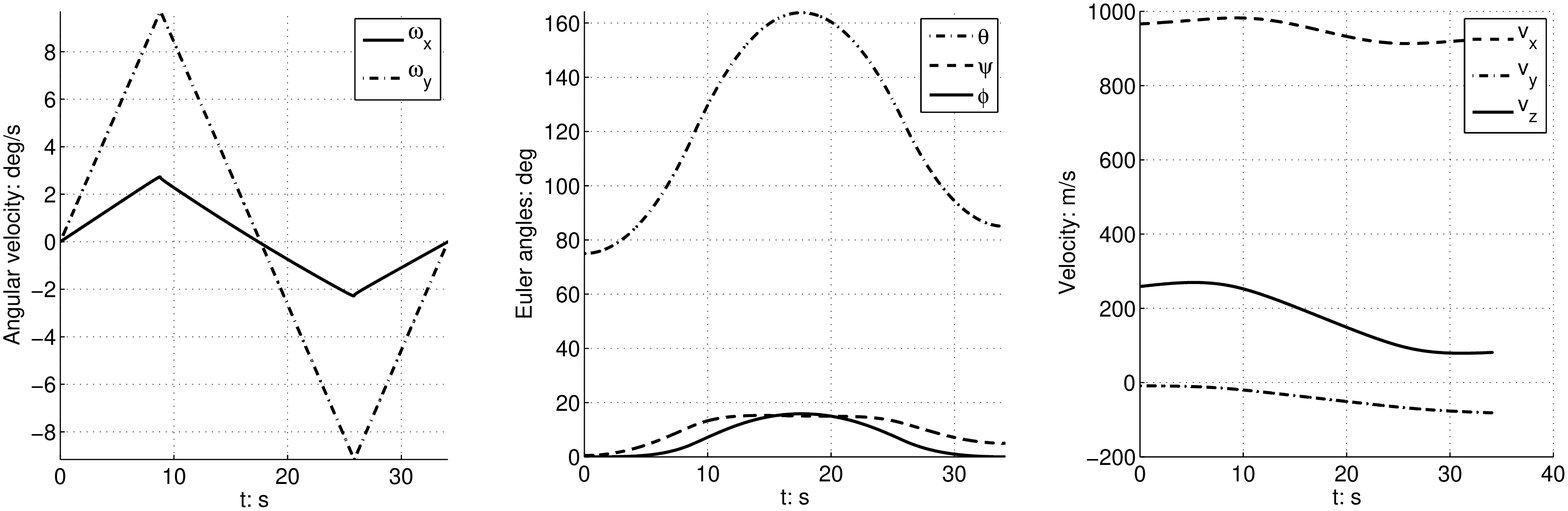}
	\caption{State variable and the optimal control with (TC1) and $v_0=1000$.}
	\label{state_1000}
\end{figure}
\begin{figure}[h]
\centering
	\includegraphics[scale=0.36]{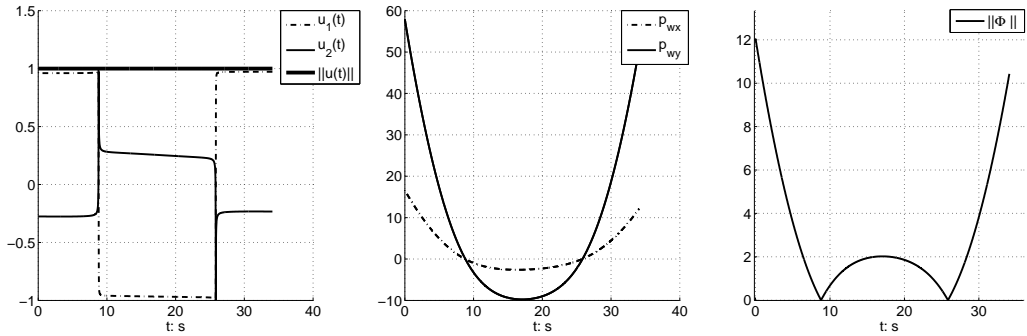}
	\caption{Adjoint variable and the switching function with (TC1) and $v_0=1000$.}
	\label{adjoint_1000}
\end{figure}
We observe that the optimal control switches twice, at times $8.8$ s and $25.8$ s. These two switching points are of order $1$ (i.e., $\Phi(t)=0$ and $\dot{\Phi}(t)\ne0$). Accordingly with Lemma \ref{lem_pointorder1}, the control turns with an angle $\pi$ at those points.

\medskip

Let us give another numerical example, taking the same terminal conditions as previously except for $v_0$, and we take $v_0=1500$ m/s. The time history of the state, of the optimal control and of the switching function are reported on Figures \ref{state_1500} and \ref{adjoint_1500}. One can see on Figure \ref{adjoint_1500} that the optimal control turns two more times with an angle $\pi$ due to two switching points of order one. 
\begin{figure}[h]
        \centering
        \includegraphics[scale=0.36]{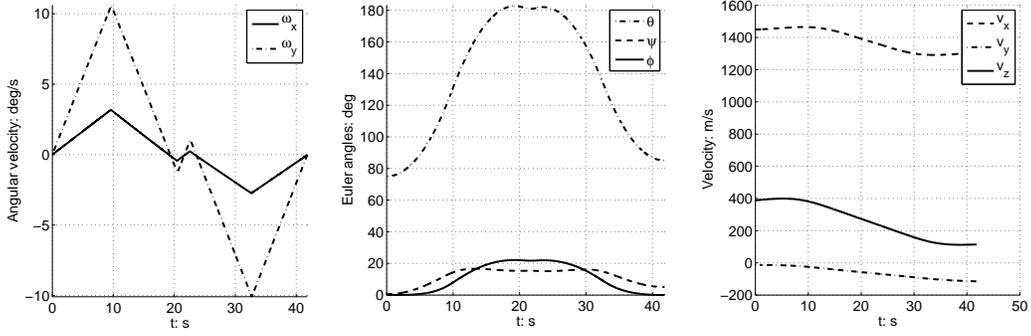}
        \caption{Time histories of state with (TC1) and $v_0=1500\, m/s$.}
        \label{state_1500}
\end{figure}
\begin{figure}[h]
        \centering
        \includegraphics[scale=0.36]{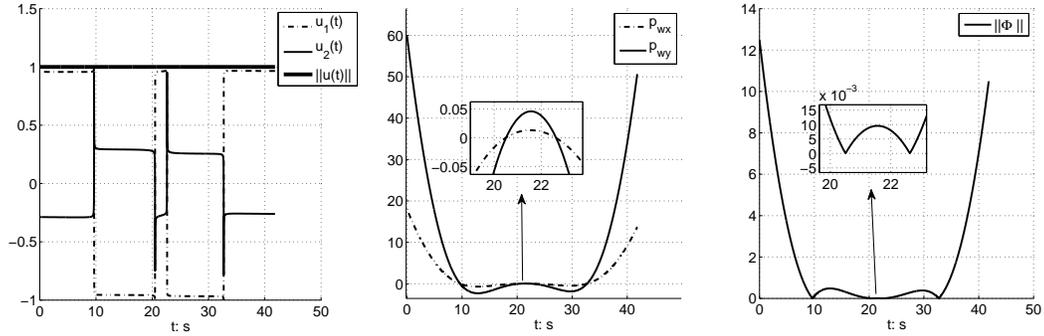}
        \caption{Time histories of control and switching function with (TC1) and $v_0=1500\, m/s$.}
        \label{adjoint_1500}
\end{figure}

%%%%%%%%%%%%%%%%%
\paragraph{Pitching down maneuvers.}
We set $v_0=1500$ m/s and we use the numerical values denoted by TC3 in Table \ref{tercond_table}. The optimal solution is drawn on Figures \ref{state_1500_inverse} and \ref{control_1500_inverse}.
\begin{figure}[h]
        \centering
        \includegraphics[scale=0.36] {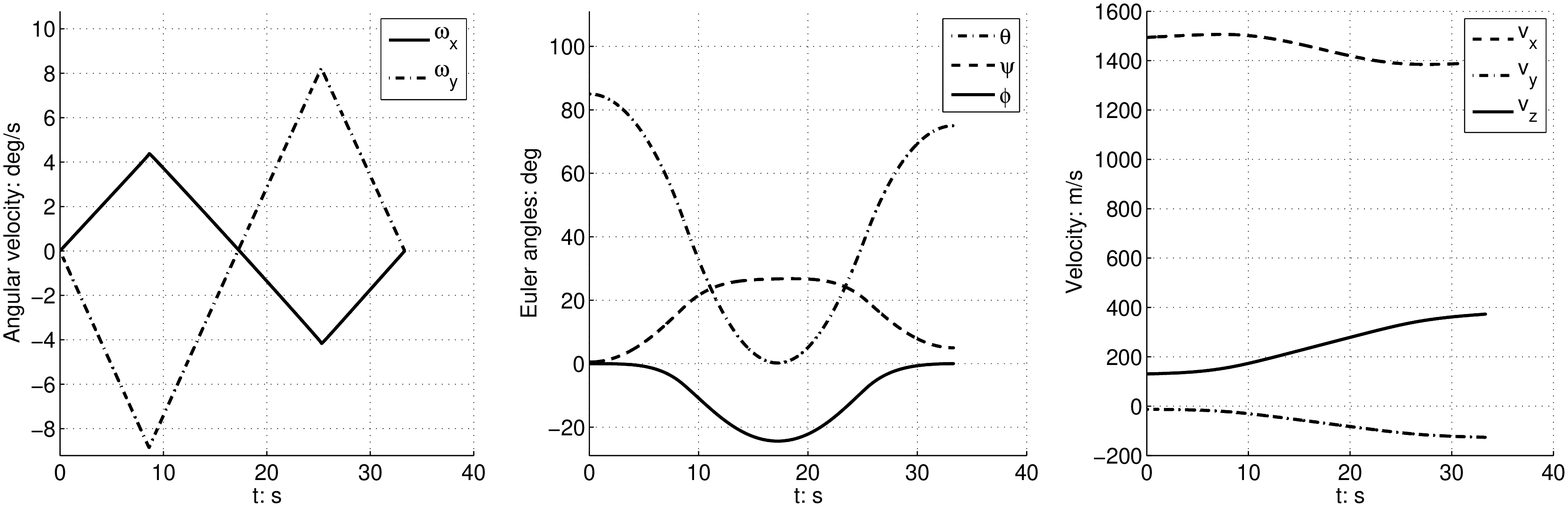}
        \caption{Time histories of state with TC3 and $v_0=1500$.}
        \label{state_1500_inverse}
\end{figure}
\begin{figure}[h]
        \centering
        \includegraphics[scale=0.36] {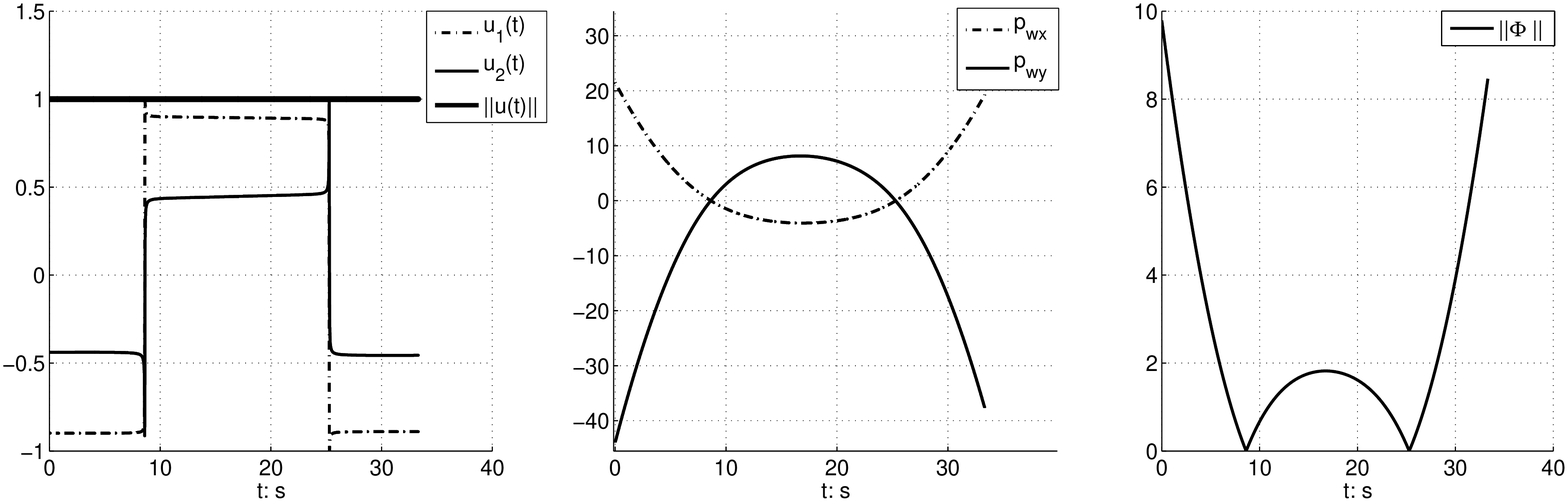}
        \caption{Time histories of control and switching function with TC3 and $v_0=1500$.}
        \label{control_1500_inverse}
\end{figure}
The shorter maneuver time $t_f$ indicates that it is easier to turn clockwise the axis of the velocity vector than to turn it anti-clockwise. This corresponds to the intuition. The reason is that the total force induced by the gravity force tends to reduce $v_x$, i.e., it helps the velocity to turn clockwise, and so together with the rocket thrust force, the maneuver time is less than that of the anti-clockwise case.

Note that the derived time history of the adjoint variable (for both pitching up and pitching down maneuvers) do not have the same order of magnitude, i.e., $p_{\omega_x}$ and $p_{\omega_y}$ are ten times larger than $p_{\theta}$ and $p_{\psi}$, and are thousand times larger than $p_{v_x}$, $p_{v_y}$ and $p_{v_z}$. This indicates again that the shooting method is difficult to initialize successfully. 

We note that the indirect strategy proposed in Section \ref{indirecte_strategy} is efficient also because the smallest adjoint variables $p_{v_x}$, $p_{v_y}$ and $p_{v_z}$ are already quite accurately estimated thanks to the problem of order zero.

%%%%%%%%%%%%%%%%
\subsection{Numerical results with chattering arcs}\label{C_ca}
\paragraph{Sub-optimal solution by the indirect approach.}
On Figures \ref{state_2000} and \ref{adjoint_2000} is given a sub-optimal solution of $\MTCP$ with the terminal conditions (TC2) of Table \ref{tercond_table} and $v_0=2000$ m/s. Due to chattering, the continuation parameter $\lambda_3$ stops at value $\lambda_3^\ast=0.98$ (see Remark \ref{rem_subopti}). 
\begin{figure}[h]
        \centering
        \includegraphics[scale=0.36]{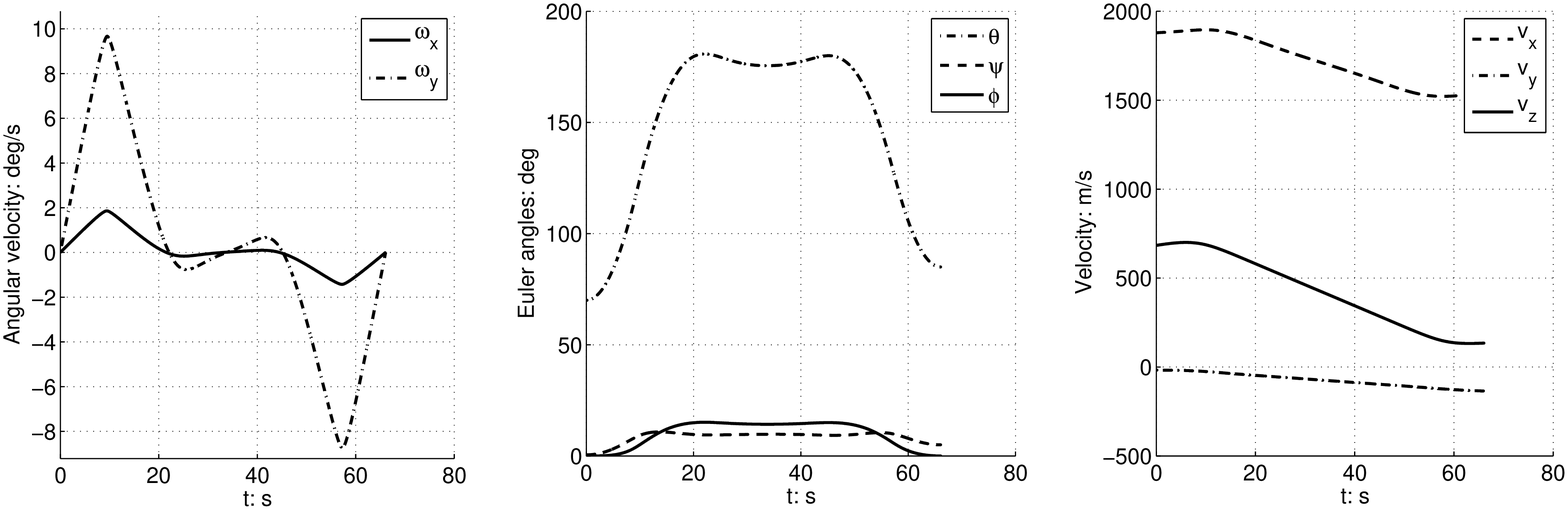}
        \caption{Time histories of state with (TC2) and $v_0=2000$.}
        \label{state_2000}
\end{figure}
\begin{figure}[h]
        \centering
        \includegraphics[scale=0.36]{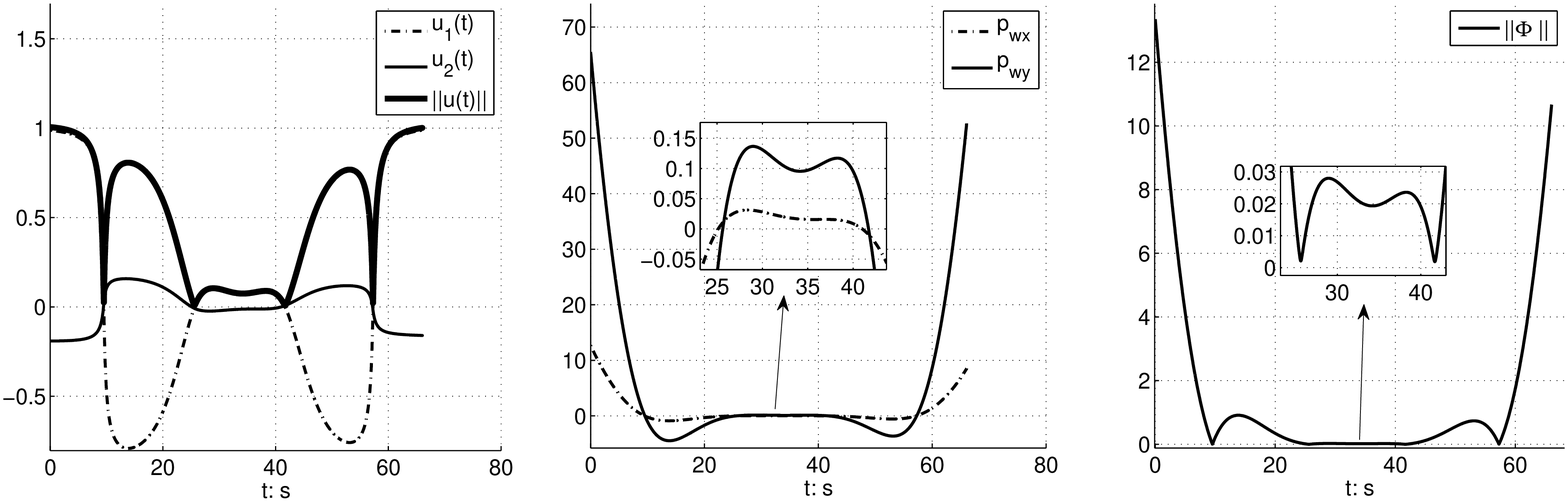}
        \caption{Time histories of control and switching function with (TC2) and $v_0=2000$.}
        \label{adjoint_2000}
\end{figure}
Observing from Figure \ref{adjoint_2000}, the switching function pass four times the switching surface $\Gamma$ are small between time $26.5$ and $40.3$. The control, instead of bang-bang or singular, is continuous. The cost of this trajectory is $69.3$ and the final time $t_f = 66.0\,s$.

%%%%%%%%%%%%
\paragraph{Sub-optimal solution by the direct approach.}
With the same terminal conditions as above, we now use the direct method described in Section \ref{C_dc}.
Numerical simulations show that the initialization step for the direct method procedure is quite robust (a constant initial guess is enough).
The results are reported on Figures \ref{state_2000_bocop}, \ref{control_2000_bocop} and \ref{adjoint_2000_bocop}.

\begin{figure}[h]
\centering
\includegraphics[scale=0.36]{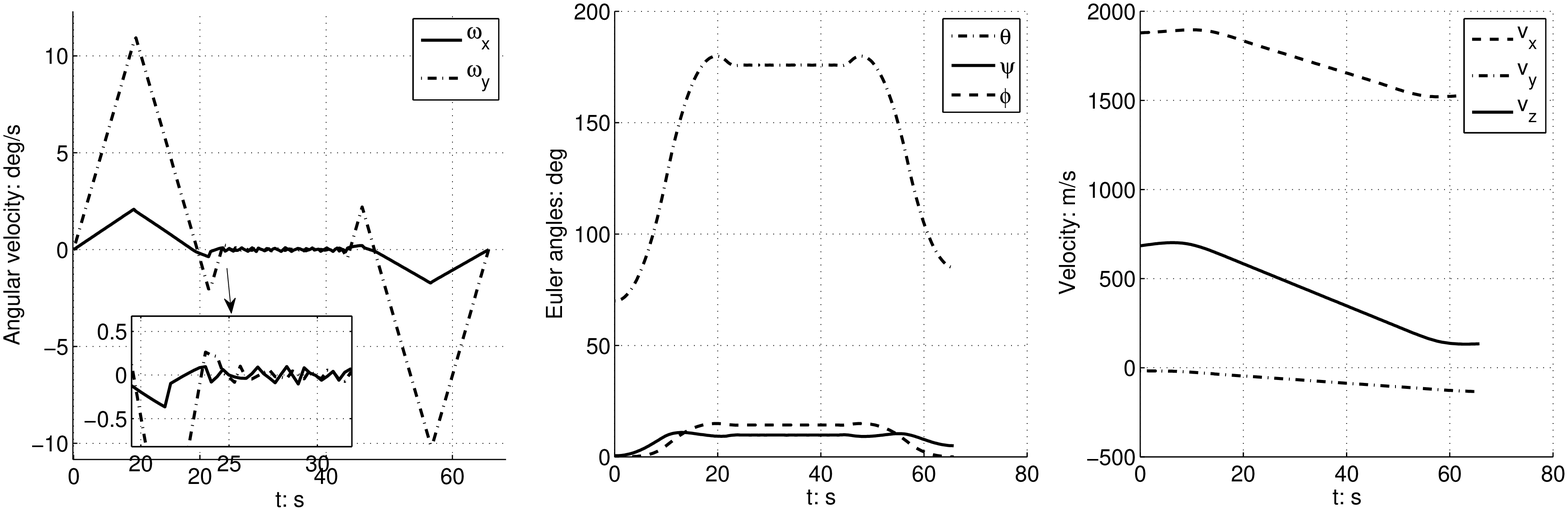}
\caption{State variable $x(t)$ ($v_0=2000\,m/s$).}
\label{state_2000_bocop}
\end{figure}
\begin{figure}[h]
\centering
\includegraphics[scale=0.36]{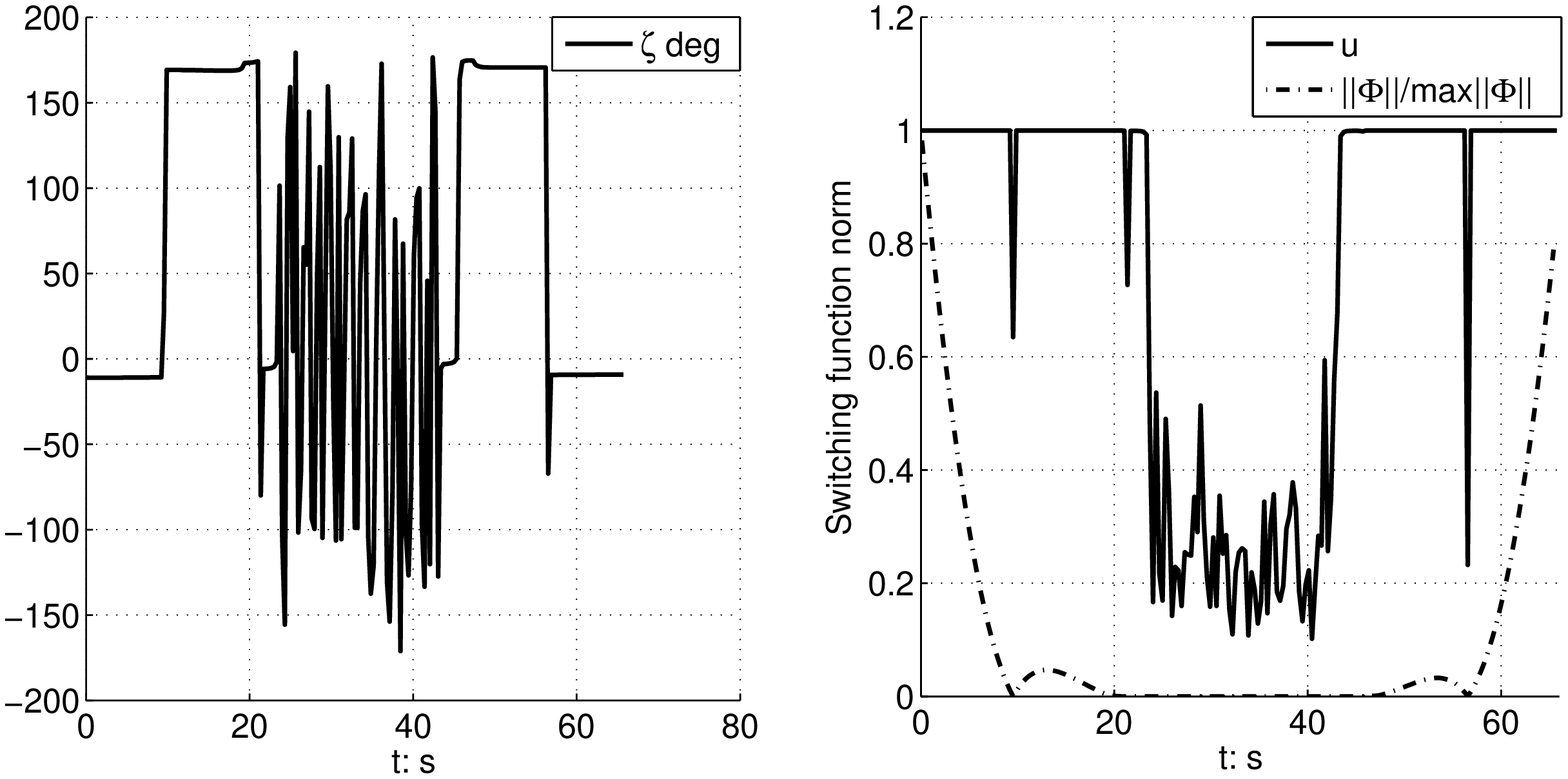}
\caption{Optimal control and $\Vert  \Phi(t) \Vert $ ($v_0=2000\,m/s$).}
\label{control_2000_bocop}
\end{figure}

We observe that, when $t \in [23,43]$, the control oscillates much with a modulus less than $1$: this indicates that there is a singular arc in the ``true" optimal trajectory, and therefore chattering according to Corollary \ref{cor_chattering}.

\begin{figure}[h]
\centering
\includegraphics[scale=0.36]{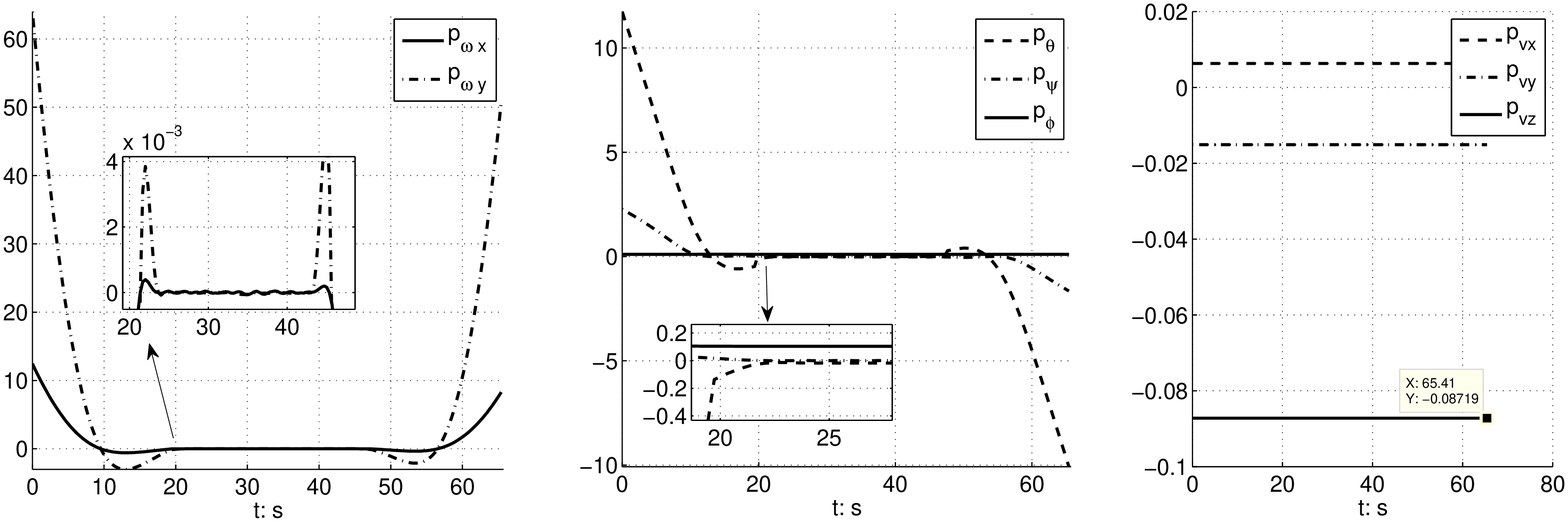}
\caption{Adjoint state $p(t)$ ($v_0=2000\,m/s$).}
\label{adjoint_2000_bocop}
\end{figure}

Note that, along the singular arc, the variables $\omega_x$, $\omega_y$, $p_{\omega_x}$, $p_{\omega_y}$, $p_\theta$, $p_\psi$ and $p_\phi$ are almost equal to $0$,
and we check that this arc indeed lives on the singular surface $S$ defined by \eqref{ssurface}. 
Therefore, it turns out that there is a singular arc in the optimal trajectory, causing chattering at the junction with regular arcs.

The maneuver time is $t_f=65.4$ s. Compared with that of the sub-optimal solution derived from the indirect strategy, only $0.6$ s are gained with the direct method. The direct approach is hundreds of times slower than the indirect approach and the obtained control presents many oscillations, which is not much appropriate for a practical use.

\medskip

On Figure \ref{state_1000}, \ref{state_1500}, \ref{state_1500_inverse}, \ref{state_2000} and \ref{state_2000_bocop}, we note that the attitude angles first tend to reach the values $\theta^\ast$ deg, $\psi^\ast$ (i.e., $\theta^\ast=176.9$ deg and $\psi^\ast=18.5$ deg for Figures \ref{state_1000} and \ref{state_1500}; $\theta^\ast=-17.4$ deg and $\psi^\ast=25.2$ deg for Figure \ref{state_1500_inverse}; $\theta^\ast=176.1$ deg and $\psi^\ast=11.2$ deg for Figures \ref{state_2000} and \ref{state_2000_bocop}), and then turn back to reach their final values. Actually, doing more numerical simulations with different terminal conditions (note reported here), we observe that the extremals have a trend to first go towards the singular surface and then to get back to the target submanifold. We suspect that this is due to a turnpike phenomenon as described in \cite{TrelatZuazua_JDE2015}, at least when the required transfer time is quite large.

%%%%%%%%%%%%%%%%%%%%%%%%%%%
\section{Conclusion}\label{Chp_conclusion}
We have studied the time optimal control of the rocket attitude motion combined with the orbit dynamics. The problem $\MTCP$ is of interest because of the coupling of guidance and navigation systems. However, this problem is difficult to solve because of the occurence of the chattering phenomenon for certain terminal conditions. 

Using geometric control, we have established a chattering result for bi-input control-affine systems. We have also classified the switching points for the extremals of the problem $\MTCP$, according to the order of vanishing of the switching function, showing the behavior of the control at the singularities.

In order to compute numerically the solutions of problem $\MTCP$, we have implemented two approaches. The indirect approach, combining shooting and numerical continuation, is time-efficient when the solution does not contain any singular arcs.
For certain terminal conditions, 
the optimal solution of $\MTCP$ involves a singular arc that is of order two, and the connection with regular arcs can only be done by means of chattering. The occurrence of chattering causes the failure of the indirect approach. For such cases, we have proposed two possible numerical alternatives. Since our indirect approach involves three continuations, one of them being concerned with a continuation on the cost function (and thus on the Hamiltonian and the control), we have proposed, as a first alternative, to stop this last continuation before its failure: in such a way, we obtain a sub-optimal solution, which seems to be very acceptable for a practical use. The second alternative is based on a direct approach, and then we obtain as well a sub-optimal solution having a finite number of switchings, this finite number being limited by the chosen step of the subdivision in the discretization scheme.
In any case, the direct strategy is much more time consuming than the indirect approach.
Note that, in both cases, it is not required to know a priori the structure of the optimal solution (in particular, the number of switchings).

As an open issue, one may consider to add atmospheric forces in the model. 
Since the magnitude of the aero-forces is low (at least, it should be much smaller than the rocket thrust), we expect this extension to be doable, for instance by means of an additional continuation.

%%%%%%%%%%%%%%%%%%%%%%%%%%%%%%%%%%%%%%%%%%%%%%%%%%%%%%%%%%%%%%%%%%%%%%%%%
\medskip

\paragraph{Acknowledgment.}
The second author acknowledges the support by FA9550-14-1-0214 of the EOARD-AFOSR.

%%%%%%%%%%%%%%%%%%%%%%%%%%%%%%%%%%%%%%%%%%%%%%%%%%%%

\end{document}